\DeclareMathAlphabet{\pazocal}{OMS}{zplm}{m}{n}
\author{J\'er\^ome Bolte\footnote{TSE (GREMAQ, Universit\'{e} Toulouse I), Manufacture des Tabacs, 21 all\'{e}e de Brienne, 31015 Toulouse, France. E-mail: {\tt jerome.bolte@tse-fr.eu.} Effort sponsored by the Air Force Office of Scientific Research, Air Force Material Command, USAF, under grant number FA9550-14-1-0056. This research  also benefited from the support of the ``FMJH Program Gaspard Monge in optimization and operations research".} \, and Edouard Pauwels\footnote{Faculty of Industrial Engineering and Management, Technion, Haifa, Israel. E-mail: {\tt epauwels@ie.technion.ac.il}}}
\date{\today}
\title{Majorization-minimization procedures and convergence of SQP methods 
for semi-algebraic and tame programs }
\newcommand{\argmin}{{\rm argmin}}
\newcommand{\PP}{\big(\mathcal{P}\big)}
\newcommand{\PPx}{\mathcal{P}(x)}
\newcommand{\NLP}{\big(\mathcal{P}_{\scriptstyle\rm \,NLP}\big)}
\newcommand{\SSS}{\big(\mathcal{S}\big)}
\newcommand{\DD}{\mathcal{D}}
\newcommand{\FF}{\mathcal{F}}
\newcommand{\dist}{\mbox{\rm dist}\,}
\newcommand{\dom}{\mbox{dom}\,}
\newcommand{\RR}{\mathbb{R}}
\newcommand{\R}{\mathbb{R}}
\newcommand{\NN}{\mathbb{N}}
\newcommand{\test}{{\rm test}}
\newcommand{\cone}{\mbox{\rm cone}\,}
\newcommand{\co}{\mbox{\rm co}\,}
\newcommand{\val}{\mbox{\rm val}\,}
\newtheorem{theorem}{Theorem}[section]
\newtheorem{lemma}[theorem]{Lemma}
\newtheorem{proposition}[theorem]{Proposition}
\newtheorem{corollary}[theorem]{Corollary}
\newtheorem{definition}{Definition}
\newtheorem{remark}{Remark}
\newtheorem{example}{Example}
\newenvironment{proof}[1][]{\noindent {\bf Proof. #1\;}}{\hfill $\Box$\\}
\colorlet{FRAME}{yellow!5!white}
\begin{document}

\maketitle

\begin{abstract}
	In view of solving nonsmooth and nonconvex problems involving complex constraints (like standard NLP problems), we study general maximization-minimization procedures produced by families of strongly convex sub-problems.  Using techniques from semi-algebraic geometry and variational analysis --in particular \L ojasiewicz inequality-- we establish the convergence of sequences generated by this type of schemes to critical points. 
	
	The broad applicability of this process is illustrated in the context of NLP. In that case critical points coincide with KKT points. When the data are semi-algebraic or real analytic our method applies (for instance) to the study of various SQP methods: the moving balls method, S$\ell^1$QP, ESQP. Under standard qualification conditions, this provides --to the best of our knowledge-- the first general convergence results  for  general nonlinear programming problems.  We emphasize the fact that, unlike most works on this subject,   no second-order conditions  and/or convexity assumptions whatsoever are made. Rate of convergence are shown to be of the same form as those commonly encountered with first order methods.
	\end{abstract}

\noindent
{\bf Keywords.} SQP methods, S$\ell^1$QP,  Moving balls method, Extended Sequential Quadratic Method, KKT points, KL inequality, Nonlinear programming, Converging methods, Tame optimization.
\newpage

\tableofcontents

\section{Introduction}

Many optimization methods consist in approximating a given problem by a sequence of simpler problems that can be solved in closed form or computed fast, and which eventually provide a solution or some acceptable improvement. From a mathematical viewpoint some central questions are the convergence of the sequence to a desirable point (e.g. a KKT point), complexity estimates, rates of convergence. For these theoretical purposes  it is often assumed that the constraints are simple, in the sense that their projection  is easy to compute (i.e. known through a closed formula), or that the objective involve nonsmooth terms whose proximal operators are available (see e.g. Combettes and Pesquet (2011) \cite{combpesq11}, Attouch et al. (2010) \cite{attouch2010proximal}, Attouch et al. (2013) \cite{attouch2013convergence}).  An important challenge is to go beyond this {\em prox friendly} (\footnote{A term we borrow from Cox et al. recent developments (2013) \cite{nemjud}}.) setting and to address mathematically the issue of nonconvex nonsmooth problems presenting complex geometries. 

The richest field in which these problems are met, and which was the principal motivation to  this research, is probably ``standard nonlinear programming" in which KKT points are generally sought through the resolution of quadratic programs of various sorts.  We shall refer here to these methods under the general vocable of {\em SQP methods}. The bibliography on the subject is vast, we refer the readers to Bertsekas (1995) \cite{bert95}, Nocedal and Wright (2006) \cite{nocewrig06}, Gill and Wong (2012) \cite{philwong12}, Fletcher et al. (2002) \cite{fletcheretcie} (for instance) and references therein for an insight. Although these methods are quite old now --the pioneering work seems to originate in the PhD thesis of Wilson \cite{wils63} in 1963-- and massively used in practice, very few general convergence or complexity results are available. Most of them are local and are instances of the classical case of convergence of Newton's method~(\footnote{Under second-order conditions and assuming that no Maratos effect \cite{mara78} troubles the process.}) Fletcher et al. (2002) \cite{Fletcher}, Bonnans et al. (2003) \cite{bonnans03}, Nocedal and Wright (2006) \cite{nocewrig06}. Surprisingly the mere question ``are the limit points KKT points?" necessitates rather strong assumptions and/or long developments -- see Bonnans et al. (2003) \cite[Theorem 17.2]{bonnans03} or Burke and Han (1989) \cite{burke}, Byrd et al. (2005) \cite{nocedal05}, Solodov (2009) \cite{solo09} for the drawbacks of ``raw SQP" in this respect, see  also Bertsekas (1995) \cite{bert95}, Bonnans et al. (2003) \cite{bonnans03} for some of the standard conditions/\-cor\-rec\-tions/re\-cipes ensuring that limit points are KKT.

Works in which actual convergence  (or even limit point convergence) are obtained under minimal assumptions seem to be pretty scarce. In \cite{fukuluotsen03} (2003), Fukushima et al. provided a general SQCQP  method (\footnote{Sequential quadratically constrained quadratic programming.}) together with a convergence result in terms of limit points, the results were further improved and simplified by Solodov (2004) \cite{sol04}. More related to the present work is the contribution of Solodov (2009) \cite{solo09}, in which a genuinely non-trivial proof for a SQP method to eventually provide KKT limit points is given. More recently, Auslender (2013) \cite{auslender2013extended} addressed the issue of the actual convergence in the convex case by modifying and somehow reversing the basic SQP protocol: ``the merit function" (see Han (1977) \cite{han77}, Powell (1973) \cite{powe73}) is directly used to devise descent directions as in Fletcher's pioneering S$\ell^1$QP method (1985) \cite{fletcher85}. In this line of research one can also quote the works of Auslender et al. (2010) \cite{auslender2010moving} on the ``moving balls" method -- another instance of the class of SQCQP methods.

Apart from Auslender (2013) \cite{auslender2013extended}, Auslender et al. (2010) \cite{auslender2010moving}, we are not aware of other results  providing actual convergence for general smooth convex functions (\footnote{We focus here on SQP methods but alternative methods for treating complex constraints are available, see e.g. Cox et al. (2013) \cite{nemjud} and references therein.}). After our own unfruitful tries,  we think this is essentially  due to the fact that the dynamics of active/inactive constraints is not well understood -- despite some recent breakthroughs Lewis (2002) \cite{lew02}, Wright (2003) \cite{wrig03}, Hare and Lewis (2004) \cite{harelewi04} to quote a few.  In any cases ``usual" methods for convergence or complexity fail  and to our knowledge there are very few other works on the topic. In the nonconvex world, the recent advances  of Cartis et al. (2014) \cite{toint14} are first steps towards a complexity theory for NLP. Since we focus here on convergence our approach is pretty different but obviously connections and complementarities must be investigated.

\medskip

Let us describe our method for addressing these convergence issues. Our approach is  threefold:
\begin{itemize}
\item[--] We consider nonconvex, possibly nonsmooth, semi-algebraic/real analytic data; we actually provide results for definable sets. These model many, if not most, applications.
\item[--] Secondly, we delineate and study a wide class of {\em majorization-minimization  methods} for nonconvex nonsmooth constrained problems. Our main assumption being that the procedures involve {\em locally Lipschitz continuous, strongly convex upper approximations}.\\
	Under  a  general qualification assumption, we establish the convergence of the process. Once more, nonsmooth Kurdyka-{\L}ojasiewicz (KL) inequality (\L{}ojasiewicz (1963) \cite{loja1963propriete}, Kurdyka (1998) \cite{kurdyka1998gradients}) appears as an essential tool.

\item[--] Previous results are applied to derive convergence of  SQP methods (Fletcher's S$\ell^1$QP (1985) \cite{fletcher85}, Auslender (2013) \cite{auslender2013extended}) and SQCQP methods (moving balls method Auslender et al. (2010) \cite{auslender2010moving}). To the best of our knowledge, these are the first general nonconvex results dealing with possibly large problems with complex geometries  -- which are not ``prox-friendly". Convergence rates have the form $O\left(\frac{1}{k^\gamma}\right)$ with $\gamma>0$.

\end{itemize}

We describe now these results into more details which will also give an insight at the main results obtained in this paper.

\medskip

\noindent
{\bf Majorization-minimization procedures (MMP)}. These methods consist in devising at each point of the objective a simple upper  model (e.g. quadratic forms) and to minimize/update these models dynamically in order to produce minimizing/descent sequences. This principle can be traced back, at least, to Ortega (1970) \cite[section 8.3.(d)]{ortega1970iterative} and have found many applications since then, mostly in the statistics literature Dempster et al. (1977) \cite{dempster1977maximum}, but also in other branches like recently in imaging sciences Chouzenoux et al. (2013) \cite{chouzenoux}. In the context of optimization, many iterative methods follow this principle, see Beck and Teboulle (2010) \cite{beck2010gradient}, Mairal (2013) \cite{mairal2013optimization} for numerous examples --and also Noll (2014) \cite{noll} where KL inequality is used to solve nonsmooth problems using a specific class of models. These procedures, which we have studied as tools,  appeared to have an interest for their own sake. Our main results in this respect are self-contained and can be found in Sections~\ref{sec:mainResult} and \ref{sec:convergence}. Let us briefly sketch a description of the MM models we use.\\
Being given a   problem of the form
\begin{equation*}
\PP	\qquad \min \Big\{f(x):x\in \DD\Big\}
\end{equation*}
where $f:\R^n\to \R$ is a  semi-algebraic continuous function and $\mathcal{D}$ is a nonempty closed semi-algebraic set, we define at each feasible point $x$, local semi-algebraic convex models for $f$ and $\DD$, respectively $h(x,\cdot):\R^n\to \R$ -- which is actually strongly convex-- and $D(x)\subset \R^n$. We then iteratively solve problems of the form
$$x_{k+1}=p(x_k):=\argmin\Big\{ h(x_k, y):y\in D(x_k)\Big\}, \: k\in \mathbb{N}.$$
An essential assumption is that of using {\em upper approximations} (\footnote{Hence the wording  of majorization-minimization method}):  $D(x)\subset \DD$ and $h(x,\cdot)\geq f(\cdot)$ on $D(x)$. When assuming semi-algebraicity of the various ingredients, convergence cannot definitely be seen as a consequence of the results in Attouch et al. (2013) \cite{attouch2013convergence}, Bolte et al. (2013) \cite{bolte2013proximal}. This comes from several reasons. First, we do not have a ``proper (sub)gradient method" for $\PP$ as required in the general protocol described in Attouch et al. (2013) \cite{attouch2013convergence}. A flavor of these difficulty is easily felt when considering  SQP. For these methods there is, at least apparently, a sort of an unpredictability of future active/inactive constraints:  the descent direction does not allow to forecast future activity and thus does not necessarily mimic an adequate   subgradient of $f+i_D$ or of similar aggregate costs~(\footnote{$i_D$ denotes the indicator function as defined in Section \ref{s:nonsmooth}}). Besides,  even when a better candidate for being the descent function is identified, explicit features inherent to the method  still remain to be  dealt with. 

The cornerstone of our analysis is the introduction and the study of the {\em value (improvement) function} $F(x)=h(x,p(x))$. It helps circumventing the possible anarchic behavior of active/inactive constraints by an implicit inclusion of  future progress within the cost. We establish that the sequence $x_k$ has a behavior very close to a subgradient method for $F$, see Section~\ref{sec:abstrConv}.

Our main result is an asymptotic alternative, a phenomena already guessed in Attouch et al. (2010) \cite{attouch2010proximal}: either the sequence $x_k$ tends to infinity, or it converges to a critical point. As a consequence, we have convergence of the sequence to {\em a single point} whenever the problem $\PP$ is coercive.

\medskip

\noindent
{\bf Convergence of SQP type methods.} The previous results can be applied to many algorithms (see e.g. Attouch et al. (2010) \cite{attouch2010proximal}, Bolte et al. (2013) \cite{bolte2013proximal}, Chouzenoux et al. (2013) \cite{chouzenoux}), but we concentrate on some SQP methods for which such results are novel. In order to avoid a too important concentration of hardships, we do not discuss here computational issues of the sub-steps, the prominent role of step sizes, the difficult question of the feasibility of sub-problems, we refer the reader to Fletcher (2000) \cite{Fletcher}, Bertsekas (1995) \cite{bert95}, Gill and Wong (2012) \cite{philwong12} and references therein. We would like also to emphasize that, by construction, the methods we investigate may or may not involve hessians in their second-order term but they must  systematically include a fraction of the identity as a regularization parameter, \`a la Levenberg-Marquardt (see e.g. Nocedal and Wright (2006) \cite{nocewrig06}). Replacing Hessian terms by their regularization or by fractions of the identity is a common approach to regularize ill-posed problems; it is also absolutely crucial when facing large scale problems see e.g. Gill et al. (2005) \cite{snopt}, Svanberg (2002) \cite{sva02}.

The aspects we just evoked above have motivated our choice of Auslender SQP method  and of the moving balls method which are both relatively ``simple" SQP/SQCQP methods. To show the flexibility of our approach, we also study a slight variant of S$\ell^1$QP, Fletcher (1985) \cite{fletcher85}. This method, also known as ``elastic SQP", is a modification of SQP making the sub-problems feasible by the addition of slack variables. In Gill et al. (2005) \cite{snopt} the method has been adapted and redesigned to solve large scale problems (SNOPT); a solver based on this methodology is available. 

For these methods, we show that a bounded sequence must converge to a single KKT point, our results rely only on semi-algebraic techniques and do not use  convexity nor second order conditions. The semi-algebraic assumption can be relaxed to definability or local definability (tameness, see Ioffe (2009) \cite{ioffe2009invitation} for an overview). We also establish that these methods come with convergence rates  similar to those observed in classical first-order method (Attouch and Bolte (2009) \cite{attouch2009convergence}). Finally, we would like to stress that the analysis relies on geometrical tools which have a long history in the study of convergence of dynamical systems of gradient type, see for e.g. \L{}ojasiewicz (1963) \cite{loja1963propriete}, Kurdyka (1998) \cite{kurdyka1998gradients}.

\medskip

\noindent
{\bf Organization of the paper.} Section \ref{sec:section2} presents our main results concerning SQP methods. In Section \ref{sec:mainResult}, we describe an abstract framework for majorization-minimization methods that is used to analyze the algorithms presented in Section \ref{sec:section2}. We give in particular a  general result on the convergence of MM methods. Definitions, proofs and technical aspects  can be found in Section \ref{sec:convergence}.  Our results on MM procedures and SQP are actually valid for the broader class of real analytic or definable data, this is explained in Section \ref{sec:representable}. The Appendix (Section \ref{sec:appendix}) is devoted to the technical study of SQP methods, it is shown in particular how they can be interpreted as MM processes.

\section{Sequential quadratic programming for semi-algebraic and tame problems}\label{s:nlp}
\label{sec:section2}

We consider in this section problems of the form:
\begin{equation}
	\label{eq:nlp}
	\begin{array}{llll}
		\NLP &  & \min_{x \in \RR^n} & f(x) \\
		&& \mathrm{s.t.} & f_i(x) \leq 0, \; i=1, \ldots, m \\
		&&& x \in Q
	\end{array}
\end{equation}
where each $f_i$ is twice continuously differentiable and $Q$ is a  nonempty closed convex set. $Q$ should be thought as a ``simple" set, i.e.,  a set whose projection is known in closed form (or ``easily" computed), like for instance one of the archetypal self dual cone $\R_+^n$, second order cone, positive semi-definite symmetric cone~(\footnote{Computing the projection in that case requires to compute eigenvalues, which may be very hard for large size problems}), but also an affine space, an $\ell^1$ ball, the unit simplex, or a box.  Contrary to $Q$, the set
$$\mathcal{F}  = \{x,\; f_i(x) \leq 0,\; i = 1, \ldots, m\},$$
has, in general, a complex geometry and its treatment necessitates local approximations in the spirit of SQP methods. 
Specific assumptions regarding coercivity, regularity and constraint qualification are usually required in order to ensure correct behavior of numerical schemes, we shall make them precise for each method we present here.  Let us simply recall that under these assumptions, any  minimizer $x$ of $\NLP$ must satisfy the famous KKT conditions:
\begin{align}
 & x\in Q, \; f_1(x)\leq 0,\ldots,f_m(x)\leq 0,\\
 & \exists \: \lambda_1\geq  0, \ldots, \lambda_m\geq 0,\\
 & \nabla f(x)+\sum \lambda_i \nabla f_i(x)+N_Q(x)\ni 0,\\
& \lambda_if_i(x)=0, \forall i=1,\ldots, m,
\end{align}
 where $N_Q(x)$ is the normal cone to $Q$ at $x$ (see Section~\ref{sec:tools}).

SQP methods assume very different forms, we pertain here to three ``simple models" with the intention of illustrating the versatility of our approach (but other studies could be led):

\begin{itemize}
 \item[--] Moving balls method: an SQCQP method,
\item[--] ESQP method: a merit function approach with $\ell^\infty$ penalty,
\item[--] S$\ell^1$QP method: a merit function approach with $\ell^1$ penalty.
\end{itemize}

\subsection{A sequentially constrained quadratic method: the moving balls method}
This method was introduced in Auslender et al. (2010) \cite{auslender2010moving} for solving problems of the form of (\ref{eq:nlp}) with $Q = \RR^n$. The method enters the framework of sequentially constrained quadratic problems. It  consists in approximating the original problem by a sequence of quadratic problems over an intersection of balls. Strategies for simplifying constraints approximation, computations of the intermediate problems are described in Auslender et al. (2010) \cite{auslender2010moving}, we only focus here on the convergence properties and rate estimates. The following assumptions are necessary.

\begin{description}
	\item[Regularity:] The functions 
		\begin{equation} \label{eq:MB1}
			f, f_1, \ldots, f_m:\R^n\rightarrow \R
		\end{equation}		are $C^2$, with Lipschitz continuous gradients. For each $i=1,\ldots,m$, we denote by $L_i>0$ some Lipschitz constants of $\nabla f_i$ and by $L>0$  a Lipschitz constant of $\nabla f$. 		
		\item[Mangasarian-Fromovitz Qualification Condition (MFQC):]  For $x$ in $\mathcal{F}$,  set  $I(x)=\{i=1,\ldots,m: f_i(x)=0\}.$  MFQC writes
		\begin{equation} \label{eq:MB3}
		\forall x\in \FF, \: \exists d \in \RR^n \text{ such that } \left\langle\nabla f_i(x), d \right\rangle < 0, \forall i\in I(x).
		\end{equation}
	\item[Compactness:] There exists a feasible $x_0 \in \mathcal{F}$ such that 
		\begin{equation} \label{eq:MB2}
			\{x\in \R^n: f(x) \leq f(x_0)\} \mbox{ 	is bounded.}
		\end{equation}
	
\end{description}
\begin{remark}
	As presented in Auslender et al. (2010) \cite{auslender2010moving}, the moving ball method is applicable to functions that are only $C^1$ with Lipschitz continuous gradient. The assumption made in (\ref{eq:MB1}) is therefore slightly more restrictive than the original presentation of Auslender et al. (2010) \cite{auslender2010moving}. 
\end{remark}

The moving balls  method is obtained by solving a sequence of quadratically constrained problems.\\
\centerline{
\fcolorbox{black}{FRAME}{
\begin{minipage}{13cm}
\begin{center}{\bf Moving balls method }
\end{center}
\begin{equation*}
	\begin{array}{ll}
		\text{\bf Step 1 } & x_0\in \FF.\\
		\text{\bf Step 2 } & \text{Compute }x_{k+1} \text{ solution of }\\
		 &   
		 \begin{array}{rl}
			 \displaystyle\min_{y\in \RR^n}&f(x_{k}) + \left\langle \nabla f(x_k), y - x_k \right\rangle + \frac{L}{2} ||y - x_k||^2\\
			 \text{s.t.}&f_i(x_k) + \left\langle \nabla f_i(x_k), y - x_k \right\rangle + \frac{L_i}{2} ||y - x_k||^2  \leq 0,\,i = 1 \ldots m
			\end{array}
	\end{array}
\end{equation*}
\end{minipage}
}
}
\medskip

The algorithm can be proven to be well defined and to produce  a feasible method provided that $x_0$ is feasible, i.e., 
$$x_k\in \FF, \forall k\geq 0.$$
These aspects are thoroughly   described in Auslender et al. (2010) \cite{auslender2010moving}. 

\begin{theorem}[Convergence of the moving balls method]
Recall that $Q=\RR^n$ and assume that the following conditions hold
	\begin{itemize}
	\item[--] The functions $f,f_1,\ldots,f_m$ are semi-algebraic,
	\item[--] Lipschitz continuity conditions \eqref{eq:MB1}, 
	\item[--] Mangasarian-Fromovitz qualification condition \eqref{eq:MB3},
		\item[--] boundedness condition  \eqref{eq:MB2},
		\item[--] feasibility of the starting point $x_0 \in \mathcal{F}$.	
		\end{itemize} 
		Then, 		
\begin{itemize}	
	\item[(i)] The sequence $\{x_k\}_{k \in \NN}$  defined by the moving balls method converges to a feasible point $x_\infty$ satisfying the KKT conditions for the nonlinear programming problem $\NLP$.
	\item[(ii)]   Either convergence occurs in a finite number of steps or the rate is of the form:
 
(a)  $\|x_k-x_\infty\|=O(q^k)$, with $q\in(0,1)$, 
 
(b)  $\|x_k-x_\infty\|=O\left(\frac{1}{k^{\gamma}}\right)$, with $\gamma>0$. 
	
\end{itemize}
\end{theorem}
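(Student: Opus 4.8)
The plan is to recognize the moving balls method as a particular instance of the abstract majorization-minimization procedure (MMP) of Sections~\ref{sec:mainResult}--\ref{sec:convergence}, so that (i) and (ii) become consequences of the general convergence theorem for MM methods; the only genuinely method-specific work is the identification of abstract critical points with KKT points of $\NLP$. With $\DD = \FF$, I would take as objective model at a feasible point $x$ the strongly convex quadratic
\begin{equation*}
h(x,y) = f(x) + \left\langle \nabla f(x), y - x \right\rangle + \frac{L}{2}\|y - x\|^2,
\end{equation*}
and as constraint model the intersection of balls
\begin{equation*}
D(x) = \Big\{ y \in \RR^n : f_i(x) + \left\langle \nabla f_i(x), y - x \right\rangle + \frac{L_i}{2}\|y - x\|^2 \leq 0, \; i = 1,\ldots,m \Big\}.
\end{equation*}
With these choices, $x_{k+1} = p(x_k) = \argmin\{h(x_k,y) : y \in D(x_k)\}$ is exactly the iteration of the boxed scheme.

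Next I would verify the hypotheses of the abstract framework. The upper approximation property is nothing but the descent lemma: since $\nabla f$ is $L$-Lipschitz one has $h(x,\cdot) \geq f$, and since each $\nabla f_i$ is $L_i$-Lipschitz each ball constraint dominates $f_i$, whence $D(x) \subset \FF = \DD$. Feasibility of $x$ gives $x \in D(x)$, so every sub-problem is feasible and, by $L$-strong convexity of $h(x,\cdot)$ over the nonempty closed convex set $D(x)$, admits a unique solution; this makes $p$ and the value function $F(x) = h(x,p(x))$ well defined. Strong convexity with modulus $L$ together with $x_k \in D(x_k)$ yields the sufficient-decrease inequality $f(x_k) - f(x_{k+1}) \geq \frac{L}{2}\|x_{k+1} - x_k\|^2$, so $f(x_k)$ decreases, $x_k$ stays in the level set $\{f \leq f(x_0)\}$, which is bounded by \eqref{eq:MB2}, and $\|x_{k+1} - x_k\| \to 0$. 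Finally, semi-algebraicity of $f, f_1,\ldots,f_m$ propagates to $h$, to the set-valued map $D$, to the (single-valued) solution map $p$, and therefore to $F$; in particular $F$ enjoys the Kurdyka--\L ojasiewicz inequality.

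At this point the abstract convergence theorem applies and produces the asymptotic alternative: either $\|x_k\| \to \infty$ or $x_k$ converges to a critical point. Boundedness rules out the first branch, giving convergence of the whole sequence to a single $x_\infty$, which is the qualitative part of (i). The rate dichotomy in (ii) is then the usual by-product of the KL machinery: a semi-algebraic function satisfies the KL inequality with some exponent $\theta \in [0,1)$, and the standard Attouch--Bolte estimates turn $\theta \in (0,\tfrac12]$ into the linear rate (a) and $\theta \in (\tfrac12,1)$ into the sublinear rate (b) with $\gamma>0$, finite termination corresponding to the degenerate case.

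The main obstacle, and the part requiring MFQC, is showing that the limit $x_\infty$ is a genuine KKT point of $\NLP$. I would write the optimality conditions of each sub-problem at $x_{k+1}$,
\begin{equation*}
\nabla f(x_k) + L(x_{k+1} - x_k) + \sum_{i=1}^m \mu_i^k \big( \nabla f_i(x_k) + L_i(x_{k+1} - x_k) \big) = 0, \quad \mu_i^k \geq 0,
\end{equation*}
together with complementary slackness on the ball constraints. Since $\nabla f_i(x_k) + L_i(x_{k+1}-x_k) \to \nabla f_i(x_\infty)$, the Mangasarian--Fromovitz direction at $x_\infty$ is, for $k$ large, a strict descent direction for all (nearly) active ball constraints; this is exactly what guarantees the sub-problems satisfy a constraint qualification and, crucially, that the multipliers $\mu_i^k$ stay bounded. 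Extracting a convergent subsequence of $(\mu_i^k)$ and letting $k \to \infty$ --using $\|x_{k+1}-x_k\| \to 0$ so that the quadratic corrections vanish and $f_i(x_k) \to f_i(x_\infty)$-- yields $\nabla f(x_\infty) + \sum_i \mu_i^\infty \nabla f_i(x_\infty) = 0$ with $\mu_i^\infty \geq 0$ and $\mu_i^\infty f_i(x_\infty) = 0$, i.e.\ the KKT system (with $N_Q = \{0\}$ since $Q = \RR^n$). Establishing this multiplier boundedness and the compatibility of the abstract critical-point notion with KKT is where I expect the real effort to lie, the rest being direct verification or an appeal to the general MMP theorem.
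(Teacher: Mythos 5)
Your overall strategy is exactly the paper's: cast the moving balls iteration as an instance of the abstract MMP with $\DD=\FF$, $h(x,y)=f(x)+\langle\nabla f(x),y-x\rangle+\frac{L}{2}\|y-x\|^2$ and $D(x)$ the intersection of balls, check the upper-approximation and strong-convexity hypotheses via the descent lemma, and invoke Theorem~\ref{th:convergence} together with boundedness of the level set to exclude divergence. The rate dichotomy is likewise obtained in both arguments from the \L ojasiewicz exponent as in Attouch--Bolte.

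There is, however, a structural gap in where you deploy your key technical ingredient. You write ``at this point the abstract convergence theorem applies'' after checking only the upper-model, strong-convexity and semi-algebraicity conditions, and you reserve the multiplier-boundedness argument for the final identification of $x_\infty$ as a KKT point. But the abstract theorem has a further hypothesis, the qualification condition \eqref{qual}, which demands that $(v,0)\in\partial\hat h(x,y)$ force $\|v\|\leq K(C)\|x-y\|$ on compacta; this is precisely what yields the subgradient bound $\dist(0,\partial F(x_k))\leq K_2\|x_{k+1}-x_k\|$ on the value function, and without it the whole KL descent mechanism --- hence the convergence claim itself, not merely the KKT identification --- is unsupported. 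In the paper this is where MFQC genuinely enters: MFQC gives Slater for each sub-problem $\mathcal{P}(x)$, hence positive linear independence of the active sub-problem gradients (Corollary~\ref{cor:lambdaZero}), hence uniform boundedness of the sub-problem multipliers on compacta (Corollary~\ref{lagbound}), and this bound is exactly what verifies \eqref{qual} via the explicit formula \eqref{mov} for $\partial\hat h_{\rm MB}$. You have the right ingredient (your observation that the MF direction keeps the $\mu_i^k$ bounded) but it must be placed upstream, as a verification of \eqref{qual}, not downstream. Once that is done, the separate limiting argument on the sub-problem optimality conditions becomes unnecessary: conclusion (I)(ii) of Theorem~\ref{th:convergence} already gives $\partial f(x_\infty)+N_\FF(x_\infty)\ni0$, and Proposition~\ref{crit}(iii) converts this into the KKT system under MFQC. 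A smaller omission: the hypotheses \eqref{cont} also require continuity of the multivalued map $x\mapsto D(x)$, which the paper imports from Auslender et al.\ and which you should at least mention.
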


\subsection{Extended sequential quadratic method}


ESQM method (and S$\ell^1$QP) grounds on the well known observation that an NLP problem can be reformulated as an ``unconstrained problem" involving an exact penalization. 
Set $f_0={\mathbf 0}$ and consider 
\begin{align}
	\label{eq:penalization}
	\min_{x \in Q} \: \left\{f(x) + \beta \max_{i = 0, \ldots,m} f_i(x)\right\}
\end{align}
where $\beta$ is positive parameter. Under mild qualification assumptions and for $\beta$ sufficiently large, critical points of the above are KKT points of the initial nonlinear programming $\NLP$.  
Building on this fact, ESQM is devised as follows:
\begin{itemize}
	\item At a fixed point $x$ (non necessarily feasible), form a model of (\ref{eq:penalization}) such that: 
	\begin{itemize}
		\item complex terms $f,f_1,\ldots,f_m$ are linearized,
		\item a quadratic term $\frac{\beta}{2} ||y - x||^2$ is added both for conditioning and local control,
	\end{itemize}
\item minimize the model to find a descent direction and perform a step of size $\lambda > 0$,
\item both terms $\lambda,\beta$ are adjusted online:  
\begin{itemize}
\item $\lambda$ is progressively made smaller to ensure a descent condition,
\item $\beta$ is  increased to eventually reach a threshold for exact penalization.
\end{itemize}
\end{itemize}
We propose here a variation of this method which consists in modifying the quadratic penalty term instead of relying on a line search procedure to ensure some sufficient decrease. For a fixed $x$ in $\R^n$, we consider a local model of the form:
$$
\begin{aligned}
 h_{\beta}(x, y) &= f(x) + \left\langle\nabla f(x), y - x  \right\rangle+ \beta \max_{i=0, \ldots, p}\left\{f_i(x) + \left\langle\nabla f_i(x), y - x \right\rangle \right\} \\
& \phantom{=} + \frac{(\lambda +\beta\lambda')}{2} ||y - x||^2 + i_Q(y),
\end{aligned}
$$
where $\beta$ is a parameter and $\lambda,\lambda'>0$ are fixed. 

As we shall see this model is to be iteratively used to provide descent directions and ultimately KKT points. Before describing into depth the algorithm,  let us state our main assumptions (recall that $\FF=\{x\in \R^n:f_i(x)\leq0, \forall i=1,\ldots,m \}$).
\begin{description}
\item[Regularity:] The functions
	\begin{equation}\label{eq:ESQM0}
		f, f_1, \ldots, f_m:\R^n\rightarrow \R
	\end{equation}		are $C^2$, with Lipschitz continuous gradients. For each $i=1,\ldots,m$, we denote by $L_i>0$ some Lipschitz constants of $\nabla f_i$ and by $L>0$  a Lipschitz constant of $\nabla f$. We also assume that the step size parameters  satisfy
\begin{equation}\label{aus-lip}\lambda \geq L\text{ and }\lambda'\geq\max_{i=1, \ldots, m} L_i.\end{equation}
	\item[Compactness:] For all real numbers $\mu_1,\ldots,\mu_m$, the set 
		\begin{equation}\label{eq:ESQM3}
			\{x \in Q,\;f_i(x) \leq \mu_i,\; i = 1, \ldots, m\}\text{ is compact.} 
		\end{equation}
	\item[Boundedness:]  
		\begin{equation}\label{eq:ESQM4}
			\inf_{x\in Q} f(x) > -\infty.
		\end{equation}
	\item[Qualification condition:] The function 
		$$\max_{i=1,\ldots,m} f_i+i_Q$$ 
		has {\em no critical points} on the set $\Big\{x\in Q: \exists i=1,\ldots,m, f_i(x)\geq0\Big\}$. \\
		Equivalently, $\forall x \in \{x\in Q: \exists i=1,\ldots,m, f_i(x)\geq0\}$, there cannot exist $\{u_i\}_{i \in I}$ such that
		\begin{equation}\label{eq:ESQM2}
			u_i \geq 0,\; \sum_{i \in I} u_i = 1,\; \; \sum_{i \in I}\left\langle u_i \nabla f_i(x), z - x \right\rangle \geq 0,\, \forall z \in Q,
		\end{equation}
		where $I = \Big\{j>0,\; f_j(x) = \max_{i=1, \ldots, m} \{f_i(x)\} \Big\}.$
	\begin{remark}\label{r:qualif}
	{\rm (a) Set $J=\{1,\ldots,m\}$. The intuition behind this condition is simple: $\max_{i\in J} f+i_Q$ is assumed to be (locally) ``sharp" and thus $\beta\max_{i\in J} (\mathbf{0},f_i)+i_Q$ resembles $i_{Q\cap\FF}$ for big $\beta$. \\
	(b) The condition \eqref{eq:ESQM2} implies the generalized Mangasarian-Fromovitz condition (sometimes called Robinson condition):
	     \begin{equation*}
			\forall x \in Q \cap \mathcal{F}, \exists y \in Q\setminus\{x\},\;  \left\langle\nabla f_i(x), y - x  \right\rangle < 0, \forall i = 1, \ldots, m,\; \text{ such that }f_i(x)=0.
		\end{equation*}
		(c)  Assume $Q=\R^n$. The qualification condition \eqref{eq:ESQM2} implies that the feasible set is connected, which is a natural extension of the more usual convexity assumption.    [Proof. Argue by contradiction and assume that the feasible set has at least two connected components. Take two points $a,b$ in each of these components. The function  $g=\max\{ f_i: i=1,\ldots,m\}$ satisfies $g(a)=g(b)=0$. Using the compactness assumption (\ref{eq:ESQM3}), the conditions of the mountain pass theorem (Shuzhong (1985) \cite[Theorem 1]{shuzhong1985ekeland}) are thus satisfied. Hence, there exists a critical point $c$ such that $g(c)>0$ (strictly speaking, this is a Clarke critical point, but in this specific setting, this corresponds to the notion of crtitical point we use un this paper see Rockafellar and Wets (1998) \cite[Theorem 10.31]{rockafellar1998variational}). Thence $c$ is non feasible and the criticality of $c$ contradicts our qualification assumption.]}
	\end{remark}
\end{description}
\medskip

Let us finally introduce {\em feasibility test functions}
\begin{equation}\label{test}
\test_i(x,y)=f_i(x)+\langle\nabla f_i(x),y-x\rangle
\end{equation}
for all $i=1,\ldots,m$ and $x,y$ in $\R^n$. 

\begin{remark}[Online feasibility test]{\rm We shall use the above functions for measuring the quality of $\beta_k$. 
These tests function will also be applied to the analysis of S$\ell^1$QP. Depending on the information provided by the algorithm, other choices could be done, as for instance $\test_i(x,y)=f_i(x)+\langle\nabla f_i(x),y-x\rangle+\frac{L_{f_i}}{2}\|y-x\|$ or simply $\test_i(x,y)=f_i(y)$. }
\end{remark}

We proceed now to the description of  the algorithm.
\medskip

\centerline{\fcolorbox{black}{FRAME}{ 
\begin{minipage}{13cm}
\begin{center}{\bf Extended Sequential Quadratic Method (ESQM) }
\end{center}
\begin{equation*}
	\begin{array}{ll}
		\text{\bf Step 1 }& \text{Choose }x_0\in Q,\; \beta_0,\, \delta>0 \\
		\text{\bf Step 2 } & \text{Compute the unique solution $x_{k+1}$  of } \min_{y\in \RR^n} h_{\beta_k}(x_k, y),\\
		& \text{ i.e. solve for }y \text{ (and }s) \text{ in:}\\
		& 
		\begin{array}{rl}
			\displaystyle\min_{y, s}&f(x_k)+\langle \nabla f(x_k),y-x_k\rangle + \beta_ks +\frac{\left(\lambda+\beta_k\lambda'\right)}{2}||y-x||^2\\
			\text{s.t.}& \,f_i(x_k)+\langle \nabla f_i(x_k), y-x_k\rangle\leq s, i = 1 \ldots m,\\
			   & s \geq 0 \\
			   & y\in Q.
		\end{array}\\
		\text{\bf Step 3 } & \text{ If $\test_i(x_k,x_{k+1})\leq 0$ for all $i=1,\ldots,m$, then $\beta_{k+1}=\beta_k$,}\\
			&  \text{ otherwise }\beta_{k+1}=\beta_k+\delta  
	\end{array}
\end{equation*}
\end{minipage}
}
}
\medskip

\begin{remark}\label{auslender}
	{\rm (a) 
	Working with quadratic terms involving Hessians in $h_{\beta_k}$ is possible provided that local models are upper approximations (one can work for instance with approximate functions \`a la Levenberg-Marquardt Nocedal and Wright (2006) \cite{nocewrig06}).\\
(b)	The algorithm presented in Auslender (2013) \cite{auslender2013extended} is actually slightly different from the one above. Indeed, the quadratic penalty term was there simply proportional to $\beta$ and the step sizes were chosen by line search. Original ESQP could thus be  seen as a kind of backtracking version of the above method. \\
(c) Let us also mention that many updates rules are possible, in particular rules involving upper bounds of local Lagrange multipliers. The essential aspect is that exact penalization is reached in a finite number of iterations.\\
(d) Observe that the set $Q$ of simple constraints is kept as is in the sub-problems.
\\

}
\end{remark}

The convergence analysis carried out in Auslender (2013) \cite{auslender2013extended} can be extended to our setting, leading to the following theorem (note we do not use the semi-algebraicity assumptions).
\begin{theorem}[Auslender (2013) \cite{auslender2013extended}]
	\label{th:auslender}
	Assume that the following properties hold
	\begin{itemize}
			\item[--] Lipschitz continuity conditions \eqref{eq:ESQM0}, 
			\item[--] steplength conditions \eqref{aus-lip},
		\item[--]  qualification assumption \eqref{eq:ESQM2}, 
		\item[--] boundedness assumptions \eqref{eq:ESQM3}, \eqref{eq:ESQM4},
	\end{itemize}
	then the sequence of parameters $\beta_k$ stabilizes after a finite number of iterations $k_0$ and all cluster points of the sequence $\{x_k\}_{k \in \NN}$ are KKT points of the nonlinear programming problem $\NLP$.
\end{theorem}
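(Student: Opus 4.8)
The plan is to exploit the fact that, under the steplength conditions \eqref{aus-lip}, the model $h_{\beta}(x,\cdot)$ is a strongly convex \emph{upper approximation} of the penalized objective $\Phi_\beta:=f+\beta\max_{i=0,\ldots,m}f_i$ (with $i_Q$ added), exact at $x$. First I would record the majorization: the descent lemma applied to $f$ (with $\lambda\ge L$) and to each $f_i$ (with $\lambda'\ge L_i$, and trivially to $f_0=\mathbf 0$) gives $\Phi_\beta(y)+i_Q(y)\le h_\beta(x,y)$ for all $y$, while $h_\beta(x,x)=\Phi_\beta(x)+i_Q(x)$. Since $x_{k+1}$ minimizes the $(\lambda+\beta_k\lambda')$-strongly convex function $h_{\beta_k}(x_k,\cdot)$, evaluating the strong-convexity inequality at $x_k$ and combining with the majorization yields the sufficient-decrease estimate
\[
\Phi_{\beta_k}(x_{k+1})+\tfrac{\lambda+\beta_k\lambda'}{2}\|x_{k+1}-x_k\|^2\le\Phi_{\beta_k}(x_k).
\]
I would also write the Karush--Kuhn--Tucker system of the (strongly convex, hence uniquely solvable) subproblem: there exist multipliers $\mu_i^k\ge0$ and $\nu_k\ge 0$ with $0\in\nabla f(x_k)+(\lambda+\beta_k\lambda')(x_{k+1}-x_k)+\sum_i\mu_i^k\nabla f_i(x_k)+N_Q(x_{k+1})$, the $s$-stationarity $\sum_i\mu_i^k+\nu_k=\beta_k$, and complementarity linking $\mu_i^k>0$ to the constraints active in the definition of $s_k:=\max\{0,\max_i\test_i(x_k,x_{k+1})\}$.

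The heart of the proof is to show that $\beta_k$ stabilizes. I would argue by contradiction: if $\beta_k\to\infty$ then the feasibility test fails infinitely often, and at each such ``bad'' step $s_k>0$ forces $\nu_k=0$, hence $\sum_i\mu_i^k=\beta_k$ and the normalized multipliers $\bar\mu^k:=\mu^k/\beta_k$ lie in the unit simplex. Dividing the stationarity relation by $\beta_k$ and letting $k\to\infty$ along a bad subsequence, the term $\tfrac1{\beta_k}\nabla f(x_k)\to0$ and $\tfrac{\lambda+\beta_k\lambda'}{\beta_k}(x_{k+1}-x_k)\to0$, so that in the limit $0\in\sum_i\bar u_i\nabla f_i(\bar x)+N_Q(\bar x)$ for a limit point $\bar x$ and a limiting convex combination $\bar u$. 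Complementarity shows $\bar u$ is supported on $I(\bar x)=\{j:f_j(\bar x)=\max_i f_i(\bar x)\}$ and that $\max_i f_i(\bar x)\ge0$, so $\bar x\in\{x\in Q:\exists i,\,f_i(x)\ge0\}$; the limiting inclusion is then precisely the configuration \eqref{eq:ESQM2} forbidden by the qualification condition, a contradiction. The main obstacle is exactly what makes this limit legitimate: one must secure, \emph{while $\beta_k$ is still increasing}, both the boundedness of $\{x_k\}$ and $\|x_{k+1}-x_k\|\to0$. Boundedness I would extract from the compactness assumption \eqref{eq:ESQM3} via $f_i(x_k)\le\max\{0,\max_i f_i(x_k)\}$ once the infeasibility measure is controlled, and the vanishing of the steps from the decrease estimate; careful bookkeeping of the penalty jumps $\delta\max_{i=0,\ldots,m}f_i(x_{k+1})$ against the decrease of $f+\beta_k\max_{i=0,\ldots,m}f_i$ is the delicate point, and is where I expect the real work to lie.

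Once $\beta_k\equiv\bar\beta$ for $k\ge k_0$, the analysis becomes classical: the iterates perform a genuine descent on the \emph{fixed} function $\Phi_{\bar\beta}+i_Q$, which is bounded below by \eqref{eq:ESQM4} (since $\max_{i=0,\ldots,m}f_i\ge0$) and coercive by \eqref{eq:ESQM3}. Hence $\{x_k\}$ is bounded, $\sum_k\|x_{k+1}-x_k\|^2<\infty$, so $\|x_{k+1}-x_k\|\to0$. Finally I would analyze an arbitrary cluster point $x_\ast=\lim x_{k_j}$. For $k\ge k_0$ the test passes, i.e.\ $s_k=0$, whence the majorization gives $\max\{0,\max_i f_i(x_{k+1})\}\le\tfrac{\lambda'}{2}\|x_{k+1}-x_k\|^2\to0$; by continuity $x_\ast$ is feasible for $\NLP$. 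Passing to the limit in the subproblem's stationarity relation (now with $\sum_i\mu_i^k\le\bar\beta$ bounded, so the multipliers subconverge), using $x_{k_j+1}\to x_\ast$, $\|x_{k_j+1}-x_{k_j}\|\to0$ and outer semicontinuity of $N_Q$, yields $0\in\nabla f(x_\ast)+\sum_i\lambda_i\nabla f_i(x_\ast)+N_Q(x_\ast)$ with $\lambda_i\ge0$, while complementarity $\mu_i^k\test_i(x_k,x_{k+1})=0$ passes to $\lambda_i f_i(x_\ast)=0$. Thus $x_\ast$ is a KKT point of $\NLP$, as claimed.
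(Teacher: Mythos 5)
Your overall architecture matches the paper's: majorization of the penalized objective via the descent lemma, a sufficient-decrease estimate from strong convexity, a contradiction argument with normalized multipliers against the qualification condition \eqref{eq:ESQM2} to rule out $\beta_k\to\infty$, and a classical limit passage once $\beta_k$ is frozen. The contradiction argument and the final KKT analysis are essentially the paper's. However, there is a genuine gap exactly where you flag it yourself: you never actually establish boundedness of $\{x_k\}$ and $\|x_{k+1}-x_k\|\to 0$ \emph{while $\beta_k$ may still be increasing}, and without these the limit passage in your contradiction argument is not legitimate. Your proposed route --- tracking $\Phi_{\beta_k}=f+\beta_k\max_i f_i$ and ``bookkeeping the penalty jumps $\delta\max_{i}f_i(x_{k+1})$'' --- does not obviously close: since $f_0=\mathbf 0$, each jump $\delta\max_{i=0,\ldots,m}f_i(x_{k+1})\ge 0$, and if $\beta_k\to\infty$ there are infinitely many such jumps with no a priori summability of the infeasibility measures, which is precisely what you would be trying to prove. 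Deferring this as ``where the real work lies'' leaves the central claim of the theorem (stabilization of $\beta_k$) unproved.

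The paper's resolution is a normalized Lyapunov function: with $l=\inf_Q f>-\infty$ from \eqref{eq:ESQM4}, it shows
\begin{equation*}
\frac{1}{\beta_{k+1}}\bigl(f(x_{k+1})-l\bigr)+\max_{i=0,\ldots,m}f_i(x_{k+1})
\;\le\;
\frac{1}{\beta_{k}}\bigl(f(x_{k})-l\bigr)+\max_{i=0,\ldots,m}f_i(x_{k})-\frac{\lambda'}{2}\|x_{k+1}-x_k\|^2 ,
\end{equation*}
where the passage from $1/\beta_k$ to $1/\beta_{k+1}$ costs nothing because $\beta_k$ is nondecreasing and $f-l\ge 0$, and the per-step decrease $\tfrac{\lambda'}{2}\|x_{k+1}-x_k\|^2$ is independent of $\beta_k$. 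This quantity is nonnegative, so $\max_i f_i(x_k)$ stays bounded, \eqref{eq:ESQM3} gives boundedness of $\{x_k\}$, and telescoping gives $\|x_{k+1}-x_k\|\to 0$ --- all uniformly in $\beta_k$. You should replace your $\Phi_{\beta_k}$-bookkeeping by this (or an equivalent) monotone quantity; with that in place the rest of your argument goes through as written.
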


\medskip

The application of the techniques developed in this paper allow to prove a much stronger result:

\begin{theorem}[Convergence of ESQM]
	Assume that the following conditions hold
	\begin{itemize}
	\item[--] The functions $f,f_1,\ldots,f_m$ and the set $Q$ are real semi-algebraic,
		\item[--] Lipschitz continuity condition  \eqref{eq:ESQM0}, 
	\item[--] steplength condition \eqref{aus-lip},
		\item[--]  qualification assumption  \eqref{eq:ESQM2}, 
		\item[--] boundedness assumptions \eqref{eq:ESQM3}, \eqref{eq:ESQM4},
		\end{itemize}
 Then, 
 \begin{itemize}
 \item[(i)] The sequence $\{x_k\}_{k \in \NN}$ generated by {\rm (ESQM)} converges to a feasible point $x_\infty$ satisfying the KKT conditions for the nonlinear programming problem $\NLP$. 
 \item[(ii)] Either convergence occurs in a finite number of steps or the rate is of the form:
 
(a)  $\|x_k-x_\infty\|=O(q^k)$, with $q\in(0,1)$, 
 
(b)  $\|x_k-x_\infty\|=O\left(\frac{1}{k^{\gamma}}\right)$, with $\gamma>0$. 
\end{itemize}	

\end{theorem}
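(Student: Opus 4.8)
The plan is to use Theorem~\ref{th:auslender} to enter a regime with a fixed penalty parameter, then to recognize the tail of the iteration as a majorization--minimization scheme for the exact penalty function, and finally to invoke the abstract theory of Sections~\ref{sec:mainResult}--\ref{sec:convergence}.

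First, by Theorem~\ref{th:auslender} the sequence $\beta_k$ is eventually constant, say $\beta_k=\beta$ for $k\geq k_0$; since only the tail matters for convergence, I would assume $\beta_k\equiv\beta$. With $f_0=\mathbf{0}$, introduce the merit function $\Phi(x)=f(x)+\beta\max_{i=0,\ldots,m}f_i(x)+i_Q(x)$. Step~2 then reads $x_{k+1}=\argmin_y h_{\beta}(x_k,y)=:p(x_k)$, which is exactly the MM update associated with $\Phi$, the constant constraint map $D(x)=Q$, and the model $h_{\beta}(x,\cdot)$. I would check that this model meets the abstract requirements: it is strongly convex of modulus $\lambda+\beta\lambda'>0$ (being the sum of the convex maximum of affine functions, of $i_Q$, and of a strongly convex quadratic), locally Lipschitz, and a genuine upper model. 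The last point comes from the descent lemma applied to $f$ (using $\lambda\geq L$) and to each $f_i$ (using $\lambda'\geq L_i$, with $f_0\equiv\mathbf{0}$), followed by the monotonicity of the maximum, which together give $\Phi(y)\leq h_{\beta}(x,y)$ for all $y$, with equality at $y=x$. This yields the sufficient decrease $\Phi(x_{k+1})\leq\Phi(x_k)-\tfrac{\lambda+\beta\lambda'}{2}\|x_{k+1}-x_k\|^2$.

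Next I would verify the two remaining hypotheses. Since $f,f_1,\ldots,f_m$ and $Q$ are semi-algebraic, so are $\Phi$, the parametrized model $h_{\beta}$, the (single-valued, by strong convexity) solution map $p$, and hence the value function $F(x)=h_{\beta}(x,p(x))$; it therefore satisfies the Kurdyka--{\L}ojasiewicz inequality, the geometric ingredient driving convergence. Boundedness of $\{x_k\}$ would follow from the descent property: $\Phi(x_k)\leq\Phi(x_{k_0})$ forces $\max_i f_i(x_k)\leq\mu$ for some $\mu$ once $\inf_Q f>-\infty$ is used~(\eqref{eq:ESQM4}), so each $x_k$ lies in the compact set of~\eqref{eq:ESQM3}.

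With these facts in hand, the abstract theorem of Section~\ref{sec:convergence} applies: the iterates behave like a subgradient sequence for $F$ (Section~\ref{sec:abstrConv}) and obey the asymptotic alternative, whose divergent branch is excluded by boundedness, so $\{x_k\}$ converges to a single point $x_\infty$. As $x_\infty$ is then the unique cluster point, Theorem~\ref{th:auslender} immediately guarantees it is feasible and KKT for $\NLP$, which proves (i). The rate dichotomy (ii) follows, exactly as in the first-order gradient setting, from the KL exponent $\theta$ of $F$: $\theta\in[0,\tfrac12]$ gives the geometric rate $\|x_k-x_\infty\|=O(q^k)$ and $\theta\in(\tfrac12,1)$ gives $\|x_k-x_\infty\|=O(k^{-\gamma})$, finite termination being the degenerate case. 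The main obstacle I anticipate is the precise matching of the ESQM model---with its nonsmooth $\max$ term, the indicator $i_Q$, and the online-adjusted $\beta$---to the hypotheses of the abstract MM framework, and the confirmation that it is the value function $F$, rather than $\Phi$ itself, that must carry the KL inequality and the subgradient-type estimates.
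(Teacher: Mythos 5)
Your proposal follows essentially the same route as the paper: stabilize $\beta$ via Theorem~\ref{th:auslender}, recast the tail of the iteration as an MM scheme for $\Psi_\beta(x)=f(x)+\beta\max_{i}f_i(x)+i_Q(x)$ with $D(x)=Q$, verify the upper-model and strong-convexity hypotheses through the descent lemma and the steplength conditions, and conclude with Theorem~\ref{th:convergence} plus the KL rate dichotomy. The one hypothesis you flag but do not actually discharge is the qualification condition \eqref{qual} for $\hat h_{\rm ESQM}$, which the paper verifies by writing the subdifferential of $\hat h_{\rm ESQM}$ explicitly (a sum of the smooth part, $\beta$ times the convex hull of the active gradient terms, and $N_Q$) and using that the Hessians of the $C^2$ data are bounded on bounded sets; otherwise your argument is correct and complete in outline.
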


This result gives a positive answer to the ``Open problem 3" in Auslender (2013) \cite[Section 6]{auslender2013extended} (with a slightly modified algorithm).

\subsection{S$\ell^1$QP aka ``elastic sequential quadratic method"}


The S$\ell^1$QP  is an $\ell^1$ version of the previous method. It seems to have been introduced in the eighties by Fletcher\cite{fletcher85}.  Several aspects of this method are discussed in Fletcher (2000) 	\cite{Fletcher}; see also Gill et al. (2005) \cite{snopt} for its use in the resolution of large size problems (SNOPT algorithm). 
The idea is based this time on the minimization of the $\ell^1$ penalty function:
\begin{align}
	\label{flet}
	\min_{x \in Q}\, f(x) + \beta \sum_{i = 0}^m f^+_i(x)
\end{align}
where $\beta$ is a positive parameter and where  we have set $a^+=\max(0,a)$ for any real number $a$. Local models are of the form:
$$
\begin{aligned}
&\phantom{=}&&  h_{\beta}(x, y) \\
&= && f(x) + \left\langle\nabla f(x), y - x  \right\rangle + \beta \sum_{i=1}^m\left(f_i(x) + \left\langle\nabla f_i(x), y - x \right\rangle \right)^+ \\
&\phantom{=}&& + \frac{(\lambda +\beta\lambda')}{2} ||y - x||^2 + i_Q(y), \quad \forall x,y\in \R^n,
\end{aligned}
$$
where as previously $\lambda,\lambda'>0$ are fixed parameters.  Using slack variables the minimization of $h_{\beta}(x,.)$ amounts to solve the problem 
\begin{equation*}
	\begin{array}{rl}
			\displaystyle\min& f(x)+\langle \nabla f(x),y-x\rangle + \beta \sum_{i=1}^ms_i+\frac{\left(\lambda+\beta\lambda'\right)}{2}||y-x||^2\\
			\text{s.t.}& f_i(x)+\langle \nabla f_i(x), y-x\rangle \leq s_i,\, i=1 \ldots m\\
 			& s_1,\ldots,s_m \geq 0 \\
 			& y\in Q.
		\end{array}\\
\end{equation*}
Once again, the above is very close to the ``usual" SQP step, the only difference being the elasticity conferred to the constraints by the penalty term.

 The main requirements needed for this method are quasi-identical to those we used for ESQP: we indeed assume \eqref{eq:ESQM0}, \eqref{eq:ESQM2}, \eqref{eq:ESQM3}, \eqref{eq:ESQM4}, while \eqref{aus-lip} is replaced by:
\begin{equation}\label{fletcher-lip}\lambda \geq L\text{ and }\lambda'\geq\sum_{i=1}^ mL_i.\end{equation}
The latter is  more restrictive in the sense that smaller step lengths are required, but on the other hand this restriction comes with more flexibility in the relaxation of the constraints.


\medskip

In the description of the algorithm below, we make use the test functions \eqref{test} described in the previous section.

\medskip
\centerline{\fcolorbox{black}{FRAME}{ 
\begin{minipage}{13cm}
\begin{center}{\bf S$\ell^1$QP}
\end{center}
\begin{equation*}
	\begin{array}{ll}
		\text{\bf Step 1 }& \text{Choose }x_0\in Q,\; \beta_0,\, \delta>0 \\
		\text{\bf Step 2 } & \text{Compute the unique solution $x_{k+1}$  of } \min_{y\in \RR^n} h_{\beta_k}(x_k, y),\\
		& \text{ i.e. solve for }y \text{ (and }s) \text{ in:}\\
		& 
		\begin{array}{rl}
			\displaystyle\min& f(x)+\langle \nabla f(x),y-x\rangle + \beta_k \sum_{i=1}^ms_i+\frac{\left(\lambda+\beta_k\lambda'\right)}{2}||y-x||^2\\
			\text{s.t.}& f_i(x)+\langle \nabla f_i(x), y-x\rangle \leq s_i,\, i=1 \ldots m\\
 			& s_1,\ldots,s_m \geq 0 \\
 			& y\in Q.
		\end{array}\\
		\text{\bf Step 3 } & \text{ If $\test_i(x_k,x_{k+1})\leq 0$ for all $i=1,\ldots,m$, then $\beta_{k+1}=\beta_k$,}\\
			&  \text{ otherwise }\beta_{k+1}=\beta_k+\delta  
	\end{array}
\end{equation*}
\end{minipage}
}
}

\medskip
The convergence in terms of limit points and for the sequence $\beta_k$ is similar to that of previous section. In this theorem semi-algebraicity is not necessary.
\begin{theorem}
	\label{th:fletcher}
	Assume that the following properties hold
	\begin{itemize}
			\item[--] Lipschitz continuity conditions \eqref{eq:ESQM0}, 
			\item[--] steplength conditions \eqref{fletcher-lip},
		\item[--]  qualification assumption \eqref{eq:ESQM2}, 
		\item[--] boundedness assumptions \eqref{eq:ESQM3}, \eqref{eq:ESQM4},
	\end{itemize}
	then the sequence of parameters $\beta_k$ stabilizes after a finite number of iterations $k_0$ and all cluster points of the sequence $\{x_k\}_{k \in \NN}$ are KKT points of the nonlinear programming problem $\NLP$.
\end{theorem}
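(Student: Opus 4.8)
The plan is to recast S$\ell^1$QP as a majorization--minimization scheme for the exact penalty function $\Phi_\beta = f + \beta\sum_{i=1}^m f_i^+ + i_Q$ and then follow the blueprint of Theorem \ref{th:auslender}, splitting the argument into three parts: (I) the upper-model and descent properties, (II) stabilization of $\beta_k$, and (III) the passage to the KKT conditions at cluster points.

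First I would check that $h_{\beta_k}(x_k,\cdot)$ is a strongly convex upper approximation of $\Phi_{\beta_k}$ that is exact at the base point. Strong convexity (modulus $\geq \lambda > 0$) is immediate from the term $\frac{\lambda+\beta_k\lambda'}{2}\|\cdot - x_k\|^2$, so the subproblem has a unique solution $x_{k+1}$. For the majorization, the descent lemma gives $f_i(y) \leq \test_i(x,y) + \frac{L_i}{2}\|y-x\|^2$ for each $i$; applying the monotone, subadditive map $a\mapsto a^+$ yields $f_i^+(y) \leq \test_i(x,y)^+ + \frac{L_i}{2}\|y-x\|^2$, and summing with weight $\beta$, adding the descent lemma for $f$ and the exact term $i_Q$, the steplength condition \eqref{fletcher-lip} ($\lambda\geq L$, $\lambda'\geq\sum_i L_i$) gives $\Phi_\beta(y)\leq h_\beta(x,y)$ on $Q$, with equality at $y=x$. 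Strong convexity of the model then produces the sufficient-decrease inequality $\Phi_{\beta_k}(x_{k+1}) \leq \Phi_{\beta_k}(x_k) - \frac{\lambda+\beta_k\lambda'}{2}\|x_{k+1}-x_k\|^2$. I would also record the first-order optimality condition: since each $\test_i(x_k,\cdot)$ is affine, there exist multipliers $\mu_i^k\in[0,1]$ (equal to $1$ when $\test_i(x_k,x_{k+1})>0$ and to $0$ when $\test_i(x_k,x_{k+1})<0$) with $0\in\nabla f(x_k)+\beta_k\sum_i\mu_i^k\nabla f_i(x_k)+(\lambda+\beta_k\lambda')(x_{k+1}-x_k)+N_Q(x_{k+1})$.

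The hard part is to prove that $\beta_k$ stabilizes. I would argue by contradiction, assuming $\beta_k\to\infty$, so the test fails infinitely often. Using the boundedness assumptions \eqref{eq:ESQM3} and \eqref{eq:ESQM4} together with the decrease above, I would first confine the iterates to a compact set and extract a cluster point $x_*$ and limiting multipliers $\mu_i^k\to\mu_i^*$ along a test-failing subsequence, on which $\sum_i\mu_i^*\geq 1$. The technical heart is to show $\|x_{k+1}-x_k\|\to 0$; I expect this to come from summing the decrease inequality while controlling the penalty jumps $\delta\sum_i f_i^+(x_{k+1})$ produced at the increments --- equivalently, from showing the iterates become asymptotically feasible as $\beta_k\to\infty$. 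Dividing the optimality condition by $\beta_k$ and letting $k\to\infty$ then gives $0\in\sum_i\mu_i^*\nabla f_i(x_*)+N_Q(x_*)$ with $\mu_i^*=1$ whenever $f_i(x_*)>0$, so $x_*$ is a critical point of the infeasibility measure $\sum_i f_i^+ + i_Q$ lying in $\{x\in Q:\exists i,\ f_i(x)\geq 0\}$. I expect this to be the genuinely delicate point, and where the $\ell^1$ analysis departs from the $\max$ analysis of Theorem \ref{th:auslender}: because the $\ell^1$ subgradient places unit weight on every violated constraint, not only on the maximal ones, the limiting relation does not immediately contradict the qualification \eqref{eq:ESQM2}, which is phrased over the argmax index set. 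I would therefore invoke the global content of \eqref{eq:ESQM2}: it forces $\FF\neq\emptyset$ and, in the spirit of the mountain-pass reasoning of Remark \ref{r:qualif}, rules out infeasible critical points of the penalized feasibility problem, yielding the contradiction and hence a finite stabilization index $k_0$ with $\beta_k\equiv\bar\beta$ for $k\geq k_0$.

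Finally, once $\beta_k\equiv\bar\beta$ for $k\geq k_0$, the scheme is a genuine MM method for the fixed function $\Phi_{\bar\beta}$, which is bounded below, so summing the sufficient-decrease inequality gives $\|x_{k+1}-x_k\|\to 0$. For $k\geq k_0$ the test passes, i.e. $\test_i(x_k,x_{k+1})\leq 0$ for all $i$; combined with $\|x_{k+1}-x_k\|\to 0$ this yields $f_i(x_*)\leq 0$ at every cluster point $x_*$, so $x_*$ is feasible, and moreover $\mu_i^*=0$ whenever $f_i(x_*)<0$. Passing to the limit in the fixed-$\bar\beta$ optimality condition gives $0\in\nabla f(x_*)+\sum_i\lambda_i\nabla f_i(x_*)+N_Q(x_*)$ with $\lambda_i:=\bar\beta\mu_i^*\geq 0$ and complementarity $\lambda_i f_i(x_*)=0$; this is exactly the KKT system of $\NLP$, completing the proof. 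The main obstacle throughout is Part (II): establishing both the summability giving $\|x_{k+1}-x_k\|\to 0$ and the contradiction with the qualification in the presence of the $\ell^1$ subgradient.
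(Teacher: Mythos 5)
Your Parts (I) and (III) are sound and coincide with what the paper does: the majorization of $f+\beta\sum_i f_i^+ + i_Q$ via the descent lemma and \eqref{fletcher-lip}, and the passage to the KKT system once $\beta_k$ is frozen. The genuine gap is Part (II), precisely the step you flag as ``the technical heart''. The paper's proof of the ESQM version (Theorem \ref{th:auslender}), which it declares quasi-identical for S$\ell^1$QP, does not ``control penalty jumps'' at all: it works with the normalized merit quantity $E_k=\frac{1}{\beta_k}\bigl(f(x_k)-l\bigr)+\max_{i=0,\dots,m}f_i(x_k)$ (for S$\ell^1$QP one takes $\sum_{i=1}^m f_i^+(x_k)$ in place of the max), where $l=\inf_Q f>-\infty$ by \eqref{eq:ESQM4}. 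Since $f-l\geq 0$ and $\beta_{k+1}\geq\beta_k$, one has $\frac{1}{\beta_{k+1}}(f(x_{k+1})-l)\leq\frac{1}{\beta_k}(f(x_{k+1})-l)$, so the model decrease yields $E_{k+1}\leq E_k-\frac{\lambda'}{2}\|x_{k+1}-x_k\|^2$ \emph{whether or not $\beta$ is incremented at step $k$}. Because $E_k\geq 0$, this single inequality delivers boundedness of the constraint violations (hence of $x_k$ via \eqref{eq:ESQM3}) and $\sum_k\|x_{k+1}-x_k\|^2<\infty$ by telescoping. Your substitute --- summing the decrease of $\Phi_{\beta_k}$ while ``controlling the penalty jumps $\delta\sum_i f_i^+(x_{k+1})$'' --- is not carried out and does not obviously close: under the contradiction hypothesis $\beta_k\to\infty$ there are infinitely many jumps and no a priori summability of the violations, and without compactness of the iterates and $\|x_{k+1}-x_k\|\to 0$ the rest of Part (II) cannot even start. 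The $1/\beta_k$ normalization is the missing idea.

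Your second worry --- that the $\ell^1$ limiting relation is supported on all indices with $f_i(x_*)\geq 0$ rather than on the argmax set appearing in \eqref{eq:ESQM2} --- is a perceptive observation which the paper passes over silently (``the proof is quasi-identical\dots left to the reader''). But your proposed repair is not a proof: Remark \ref{r:qualif}(c) uses a mountain-pass argument to deduce connectedness of $\FF$ from \eqref{eq:ESQM2} for the function $\max_i f_i+i_Q$; it says nothing about critical points of $\sum_i f_i^+ + i_Q$, which is a different function with a different subdifferential at infeasible points, and the two nondegeneracy conditions are not equivalent in general. To make Part (II) rigorous for S$\ell^1$QP one should either phrase the qualification hypothesis directly for $\sum_i f_i^+ + i_Q$ on $\{x\in Q:\exists i,\ f_i(x)\geq 0\}$ (the exact analogue of the paper's ``no critical points'' formulation), or supply an argument reducing the $\ell^1$ stationarity relation to the max-type one; as written, neither your proposal nor the paper does this. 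In short, the architecture of your proof is the right one, but the two decisive steps of Part (II) are left unproven.
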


We obtain finally the following result:

\begin{theorem}[Convergence of S$\ell^1$QP]
	Assume that the following conditions hold
	\begin{itemize}
	\item[--] The functions $f,f_1,\ldots,f_m$ and the set $Q$ are semi-algebraic,
	\item[--] Lipschitz continuity condition  \eqref{eq:ESQM0}, 
	\item[--] steplength condition \eqref{fletcher-lip},
		\item[--]  qualification assumption  \eqref{eq:ESQM2}, 
		\item[--] boundedness assumptions \eqref{eq:ESQM3}, \eqref{eq:ESQM4}.
		\end{itemize}
 Then, 
 
 \begin{itemize}
 \item[(i)] The sequence $\{x_k\}_{k \in \NN}$ generated by {\rm (ESQM)} converges to a feasible point $x_\infty$ satisfying the KKT conditions for the nonlinear programming problem $\NLP$. 
 \item[(ii)]Either convergence occurs in a finite number of steps or the rate is of the form:
 
(a)  $\|x_k-x_\infty\|=O(q^k)$, with $q\in(0,1)$, 
 
(b)  $\|x_k-x_\infty\|=O\left(\frac{1}{k^{\gamma}}\right)$, with $\gamma>0$. 
\end{itemize}	

\end{theorem}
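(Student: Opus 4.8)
The plan is to reduce the analysis to the abstract majorization-minimization (MM) machinery of Sections~\ref{sec:mainResult} and~\ref{sec:convergence}, once the penalty parameter has been frozen. First I would invoke Theorem~\ref{th:fletcher}: under the present hypotheses the sequence $\beta_k$ is eventually constant, say $\beta_k=\beta$ for all $k\geq k_0$, and every cluster point of $\{x_k\}_{k \in \NN}$ is a KKT point of $\NLP$ (in particular feasible). It therefore suffices to study the tail $\{x_k\}_{k\geq k_0}$, which is generated by repeatedly solving $\min_y h_{\beta}(x_k,y)$ for this fixed $\beta$. I would then introduce the exact $\ell^1$ penalty function
$$\Phi(x)=f(x)+\beta\sum_{i=1}^m f_i^+(x)+i_Q(x),$$
and show that, for the frozen $\beta$, one S$\ell^1$QP step is precisely an MM step for $\Phi$ with feasible set $\DD=Q$ and strongly convex upper model $y\mapsto h_{\beta}(x,y)$, so that $x_{k+1}=p(x_k)=\argmin_y h_\beta(x_k,y)$.

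The crux is the upper-approximation property $h_{\beta}(x,y)\geq\Phi(y)$ for all $x,y$. For this I would apply the descent lemma to each datum: from \eqref{eq:ESQM0}, $f(y)\leq f(x)+\langle\nabla f(x),y-x\rangle+\tfrac{L}{2}\|y-x\|^2$ and $f_i(y)\leq f_i(x)+\langle\nabla f_i(x),y-x\rangle+\tfrac{L_i}{2}\|y-x\|^2$; taking positive parts and using the elementary inequality $(a+b)^+\leq a^++b^+$ valid for $b\geq 0$, one obtains $f_i^+(y)\leq(\test_i(x,y))^++\tfrac{L_i}{2}\|y-x\|^2$. Summing over $i$ and invoking exactly the step-length condition \eqref{fletcher-lip} --- here the \emph{sum} $\lambda'\geq\sum_i L_i$ is what is needed, since the $\ell^1$ penalty accumulates all constraints, whereas the $\ell^\infty$/ESQM case only required the maximum --- the quadratic term $\tfrac{\lambda+\beta\lambda'}{2}\|y-x\|^2$ dominates and yields $h_{\beta}(x,y)\geq\Phi(y)$. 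Strong convexity with modulus $\lambda+\beta\lambda'$ and local Lipschitz continuity of $h_{\beta}(x,\cdot)$ are immediate, being a sum of affine, positive-part, and quadratic convex terms plus $i_Q$; this guarantees existence and uniqueness of $x_{k+1}$ and the semi-algebraicity of the model, of the solution map $p$, and of the value function $F(x)=h_{\beta}(x,p(x))$.

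Next I would check the remaining standing assumptions of the abstract MM convergence theorem. Coercivity of $\Phi$ follows from the boundedness hypotheses: if $\Phi(x)\leq c$, then $x\in Q$ and, since $\inf_{Q}f>-\infty$ by \eqref{eq:ESQM4}, each $f_i^+(x)\leq(c-\inf_{Q}f)/\beta=:\mu$, so $x$ lies in the compact set $\{x\in Q:f_i(x)\leq\mu\}$ of \eqref{eq:ESQM3}; hence the sublevel sets of the lower semicontinuous function $\Phi$ are compact. Consequently the descent produced by the MM step keeps $\{x_k\}_{k\geq k_0}$ bounded, the diverging branch of the asymptotic alternative established in Section~\ref{sec:convergence} is excluded, and the general MM result yields convergence of the \emph{whole} sequence to a single critical point $x_\infty$ of $\Phi$, together with the rate dichotomy $\|x_k-x_\infty\|=O(q^k)$ or $O(k^{-\gamma})$ governed by the \L ojasiewicz exponent of the semi-algebraic function $\Phi$. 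This gives (ii). For (i), the limit $x_\infty$, being a cluster point of $\{x_k\}_{k \in \NN}$, is a feasible KKT point of $\NLP$ by Theorem~\ref{th:fletcher} (equivalently, criticality of $\Phi$ combined with the qualification condition \eqref{eq:ESQM2} and exact-penalty theory forces feasibility and the KKT conditions).

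The main obstacle I anticipate is twofold. The first is the upper-approximation estimate with summed Lipschitz constants: the condition \eqref{fletcher-lip} is exactly what makes the majorization valid, and getting the positive-part bookkeeping right is the essential point distinguishing the $\ell^1$ case from the $\ell^\infty$/ESQM case. The second, and deeper, difficulty is confirming that the nonsmooth, piecewise-defined value function $F$ satisfies the sufficient-decrease and relative-error (subgradient lower bound) conditions that drive the KL argument; this is where the unpredictable dynamics of active/inactive constraints must be absorbed into $F$, and it is precisely the content of the appendix interpreting S$\ell^1$QP as an MM process.
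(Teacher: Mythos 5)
Your proposal is correct and follows essentially the same route as the paper, whose own proof simply declares the argument ``quasi-identical'' to the ESQM case: freeze $\beta$ via the limit-point theorem, recast the step as an MM iteration for the exact $\ell^1$ penalty $\Phi$ on $\DD=Q$, verify the upper-approximation property through the descent lemma and the summed step-length condition \eqref{fletcher-lip}, use the compactness hypotheses to exclude divergence, and conclude with Theorem~\ref{th:convergence}. The one item you flag but do not carry out --- the qualification condition \eqref{qual} for the $\ell^1$ model --- goes through exactly as for ESQM, since the subdifferential $\partial\hat h_\beta(x,y)$ has first component $\bigl(\mu I_n-\nabla^2 f(x)-\beta\sum_i u_i\nabla^2 f_i(x)\bigr)(x-y)$ with $u_i\in[0,1]$, and the Hessians are bounded on bounded sets.
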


\section{Majorization-minimization procedures}
\label{sec:mainResult}
\subsection{Sequential model minimization}
We consider  a  general problem of the form
\begin{equation}
\PP	\qquad \min \Big\{f(x):x\in \DD\Big\}
\end{equation}
where $f:\R^n\to \R$ is a  continuous function and $\mathcal{D}$ is a nonempty closed set. 

In what follows we study the properties of majorization-minimization methods. At each feasible point, $x$, local convex models for $f$ and $\DD$ are available,  say $h(x,\cdot):\R^n\to \R$ and $D(x)\subset\RR^n$; we then iteratively solve problems of the form
$$x_{k+1} \in \argmin\Big\{ h(x_k, y):y\in D(x_k)\Big\}.$$

\smallskip

In order to  describe the majorization-minimization method we study, some elementary notions from variational analysis and semi-algebraic geometry are required. However, since the concepts and notations we use are quite standard, we have postponed their formal introduction in Section~\ref{sec:tools} page~\pageref{sec:tools}. We believe this contributes to a smoother presentation of our results.


\subsection{Majorization-minimization procedures 
}\label{sec:assumptions}
For the central problem at stake 
$$\PP\quad \min \Big\{f(x) :x \in \DD\Big\}$$
we make the following standing assumptions 
$$\SSS \quad 
\left\{
\begin{aligned}
 & f:\R^n\rightarrow \R \text{ is locally Lipschitz continuous, subdifferentially regular and semi-algebraic, }\\
 & \inf \Big\{f(x): x\in \DD\Big\}>-\infty,\\
 & \DD\subset\R^n \text{ is nonempty, closed, regular and semi-algebraic}.
\end{aligned}
\right.
$$
\begin{remark}[Role of regularity]{\rm The meaning of the terms subdifferential regularity/regularity is recalled in the next section. It is important to mention that these two assumptions are only used to guarantee the good behavior of the sum rule (and thus of KKT conditions)$$\partial \left(f+i_\DD\right)(x)=\partial f(x)+N_\DD(x),\:x\in \DD.$$ One could thus use   alternative sets of assumptions, like: $f$ is $C^1$ and $D$ is closed (not necessarily regular).}
\end{remark}

A  critical point $x\in \R^n$ for $\PP$ is characterized by the relation $\partial (f+i_\DD)(x)\ni 0$, i.e. using the sum rule:
$$\partial f(x)+N_\DD(x)\ni 0 \quad  \text{\rm (Fermat's rule for constrained optimization)}.$$
 When $\DD$ is a nonempty intersection of sublevel sets, as in Section~\ref{s:nlp}, it necessarily satisfies the assumptions $\SSS$ (see Appendix). Besides, by using the generalized Mangasarian-Fromovitz qualification condition  at $x$,  one sees that Fermat's rule exactly amounts to KKT conditions (see Proposition \ref{crit}).
\subsubsection*{Inner convex constraints approximation}

Constraints are locally modeled at a point $x\in \R^n$ by a  subset  $D(x)$ of $\R^n$. One assumes  that $D:\R^n\rightrightarrows\R^n$ satisfies 
\begin{equation} \label{cont}
\left\{\begin{array}{ll}
 & \dom D\supset\DD,\\
 & D(x)\subset \DD\text{ and } N_{D(x)}\,(x)\subset N_\DD(x), \forall x\in \DD,\\
    & D \text{ has closed convex values},\\
  & D\text{ is continuous (in the sense of multivalued mappings)}.\\

 \end{array}\right.
 \end{equation}

\subsubsection*{Local strongly convex upper models for $f$}
Fix $\mu>0$. 
\begin{equation}\left\{
\begin{aligned}
& & \begin{minipage}{13cm}
The family of local models $$h: \left\{\begin{array}{cll} \R^n\times \R^n & \longrightarrow & \R\\ 
 (x,\;y) &  \longrightarrow  & h(x,y)
\end{array}\right.$$\text{satisfies:}
\end{minipage}   \\  
& & \label{majmin}\\ 
\hspace{-2cm}  & & \begin{minipage}{13cm}
 \begin{itemize}
	\item[\em (i)] $h(x, x) = f(x)$ for all $x$ in $\cal D$,
	\item[\em (ii)] $\partial_y h(x,y)|_{y = x} \subset \partial f(x)$ for all $x$ in $\DD$,
	\item[\em (iii)] For all $x$ in $\DD$, $h(x, y) \geq f(y), \forall y \in D(x)$,
	\item[\em (iv)] $h$ is continuous. For each fixed $x$ in $\DD$, the function $h(x,\cdot)$ is $\mu$ strongly convex.
	\end{itemize}
	\end{minipage} 
\end{aligned}
\right.
\end{equation}


\begin{example}\label{ex:grad}{\rm (a) A typical, but important, example of upper approximations  that satisfy these properties comes from the descent lemma (Lemma~\ref{lem:descent}). Given a $C^1$ function $f$ with  $L_f$-Lipschitz continuous gradient we set $$h(x,y)=f(x)+\langle \nabla f(x),y-x\rangle +\frac{L_f}{2}\|x-y\|^2.$$
Then $h$ satisfies all of the above (with $\DD=D(x)=\R^n$ for all $x$).\\
(b)  SQP methods of the previous section provide more complex examples.   
}
\end{example}


\subsubsection*{A qualification condition for the surrogate problem}
We require a relation between the minimizers of $y \to h(x,y)$ and the general variations of $h$.
Set $$\hat h(x,y)=h(x,y)+i_{D(x)}(y),$$
for all $x,y$ in $\R^n$,  and  $\hat h(x,y)=+\infty$ whenever $D(x)$ is empty.
\begin{itemize}
	\item[] For any compact $C\subset \R^n$, there is a constant $K(C)$, such that,  
	\begin{equation}\label{qual}
	x \in \DD\cap C, \: y \in D(x)\cap C\mbox{ and }(v, 0) \in \partial \hat h(x,y)  \Longrightarrow ||v|| \leq K(C)||x - y||.
	\end{equation}
\end{itemize}


\subsubsection*{The iteration mapping and the value function}
For any fixed $x$ in $\DD$, we define the iteration mapping as the solution of the sub-problem
\begin{equation}\label{subprob}
\Big(\mathcal{P}(x)\Big) \quad \min \Big\{ h(x,y): y\in D(x) \Big\} 
\end{equation}
that is 
\begin{equation}\label{itmapping}p(x) := {\argmin}\Big\{ h(x, y): y\in D(x) \Big\}.\end{equation}
We set for all $x$ in $\DD$, 
\begin{equation}\label{value}
\val (x) =\text{value of $\PPx$}=h(x,p(x)),
\end{equation}
and $\val (x)=+\infty$ otherwise.
\begin{remark}{\rm (a) The restriction ``$x$ belongs to $\DD$" is due to the fact that our process is based on upper approximations and  thus  it is a {\em feasible model}  (i.e. generating sequences in~$\DD$). Note however that this does not mean that non feasible methods cannot be studied with this process (see ESQP and S$\ell^1$QP in the previous section) .\\
(b) Recalling Example \ref{ex:grad}, assuming further that $D(x)=\DD$ for all $x$, and denoting by $P_\DD$ the projection onto $\DD$, the above writes
$$p(x)=P_\DD\left(x-\frac{1}{L_f}\nabla f(x)\right).$$
With these simple instances for $h$ and $D$, we recover the gradient projection iteration mapping.
Note also that for this example $\partial \hat h(x,y)=(v,0) $ implies that 
$$v=(LI_n-\nabla^2 f(x))(x-p(x)).$$
Thus the qualification assumption is trivially satisfied whenever $f$ is $C^2$.
}
\end{remark}

\bigskip

Our general procedure  can be summarized as: \\

\fbox{
\begin{minipage}{13cm}
\begin{center}{\bf Majorization-minimization procedure (MMP)}\end{center}
Assume the local approximations satisfy the assumptions: \begin{itemize}
\item inner constraints approximation \eqref{cont}, 
\item upper objective approximation \eqref{majmin}, 
\item qualification conditions \eqref{qual},
\end{itemize} 
Let $x_0$ be in $\DD$ and define iteratively  
$$x_{k+1} = p(x_k),$$
where $p$ is the iteration mapping \eqref{itmapping}.
\end{minipage}
}

\medskip

\begin{example}{\rm Coming back to our model example, (MMP) reduces simply to the gradient projection method
$$x_{k+1}=P_\DD\left(x_k-\frac{1}{L_f}\nabla f(x_k)\right), \: x_0\in \DD.$$}
\end{example}

\subsection{Main convergence result}


Recall  the standing assumptions $\SSS$ on $\PP$, our main  ``abstract" contribution is the following theorem. 

\begin{theorem}[Convergence of MMP for \-semi-algebraic problems]
	\label{th:convergence}

	Assume that the local model pair $\Big(h,D(\cdot)\Big)$ satisfies: \begin{itemize} 
	\item[--] the inner convex constraints assumptions \eqref{cont}, 
	\item[--]  the upper local model assumptions \eqref{majmin}, 
	\item[--] the qualification assumptions \eqref{qual},
	\item[--] the tameness assumptions: $f,h$ and $D$ are real semi-algebraic.
	\end{itemize} 
	
	Let $x_0 \in \DD$ be a feasible starting point and consider the sequence $\{x_k\}_{k =1, 2,\ldots}$  defined by $x_{k+1} = p(x_k)$.  Then,
	
	\begin{itemize}
	\item[(I)] The following asymptotic alternative holds 
		\begin{itemize}
		\item[(i)] either the sequence $\{x_k\}_{k=1, 2,\ldots}$ diverges, i.e. $\|x_k\|\to+\infty$,
		\item[(ii)] or it converges to a single point $x_\infty$ such that
		$$\partial f(x_\infty)+N_\DD(x_\infty)\ni 0.$$
	\end{itemize}
	\item[(II)] In addition, when $x_k$ converges, either it converges in a finite number of steps or the rate of convergence is of the form:
 
(a)  $\|x_k-x_\infty\|=O(q^k)$, with $q\in(0,1)$, 
 
(b)  $\|x_k-x_\infty\|=O\left(\frac{1}{k^{\gamma}}\right)$, with $\gamma>0$. 
\end{itemize}

		\end{theorem}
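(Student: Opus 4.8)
The plan is to take the value function $F(x)=h(x,p(x))=\val(x)$ as a Lyapunov function and run the Kurdyka--\L ojasiewicz descent scheme on it; the genuinely new point is that the ``descent direction'' information must be extracted from a parametric (value function) sensitivity analysis rather than from a bona fide subgradient of $f+i_\DD$. As preliminaries, I would record that by \eqref{cont} each $D(x)$ is nonempty, closed, convex and contains $x$ for $x\in\DD$ (implicit in $N_{D(x)}(x)\subset N_\DD(x)$), so the $\mu$-strong convexity in \eqref{majmin}(iv) makes $p$ single-valued and continuous, keeps the sequence feasible ($x_{k+1}\in D(x_k)\subset\DD$), and makes $F$ continuous; moreover $F$ is semi-algebraic, being the infimal projection $F(x)=\inf_y\big(h(x,y)+i_{D(x)}(y)\big)$ of a semi-algebraic function (Tarski--Seidenberg).

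Next I would establish the descent estimate. Testing the $\mu$-strong convexity of $h(x_k,\cdot)$ at its minimizer $x_{k+1}$ against the feasible point $y=x_k$ gives $f(x_k)=h(x_k,x_k)\ge h(x_k,x_{k+1})+\tfrac\mu2\|x_{k+1}-x_k\|^2=F(x_k)+\tfrac\mu2\|x_{k+1}-x_k\|^2$, while \eqref{majmin}(iii) and \eqref{majmin}(i) give $F(x_k)=h(x_k,x_{k+1})\ge f(x_{k+1})\ge F(x_{k+1})$. Chaining these yields both $f(x_k)-f(x_{k+1})\ge\tfrac\mu2\|x_{k+1}-x_k\|^2$ and $F(x_k)-F(x_{k+1})\ge\tfrac\mu2\|x_{k+2}-x_{k+1}\|^2$. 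As $\inf_\DD f>-\infty$, summation forces $\sum_k\|x_{k+1}-x_k\|^2<\infty$, hence $\|x_{k+1}-x_k\|\to0$, and $f(x_k),F(x_k)$ both decrease to a common limit $\ell$.

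The crux is the relative error bound, and I expect it to be the main obstacle. Optimality of $x_{k+1}$ for $\hat h(x_k,\cdot)$ reads $0\in\partial_y\hat h(x_k,x_{k+1})$; by the marginal-function subdifferential calculus for $F$ (the minimizer $p(x)$ being unique by strong convexity) every $v\in\partial F(x_k)$ satisfies $(v,0)\in\partial\hat h(x_k,x_{k+1})$, so the qualification condition \eqref{qual}, applied on a compact set containing the iterates, delivers $\|v\|\le K\|x_{k+1}-x_k\|$, i.e. $\dist(0,\partial F(x_k))\le K\|x_{k+1}-x_k\|$. The difficulty is precisely that $h$ and $D$ match $f$ and $\DD$ only on the diagonal (assumptions \eqref{majmin}(i)--(ii) and $N_{D(x)}(x)\subset N_\DD(x)$), so no subgradient of $f+i_\DD$ can be read off the sub-problem away from the limit; the value function together with \eqref{qual} is the device that turns sub-problem optimality into a usable relative error, and one must carefully verify the hypotheses of the marginal calculus and keep the compact set (hence the constant $K$) uniform along the bounded part of the sequence.

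Finally I would close the dichotomy and obtain the rates. If $\|x_k\|\not\to\infty$, some subsequence is bounded and yields a cluster point $\bar x$ with $F(\bar x)=\ell$ by continuity; since $F$ is semi-algebraic it satisfies the KL inequality at $\bar x$. Feeding the descent and relative-error estimates into the standard KL argument --- using concavity of the desingularizing function and an AM--GM step to absorb the one-step index shift between $\|x_{k+2}-x_{k+1}\|$ and $\|x_{k+1}-x_k\|$ --- gives $\sum_k\|x_{k+1}-x_k\|<\infty$, so $\{x_k\}$ is Cauchy and converges to a single $x_\infty$, which proves alternative (I). Criticality is then immediate: $\|x_{k+1}-x_k\|\to0$ and continuity of $p$ force $p(x_\infty)=x_\infty$, so optimality of $x_\infty$ in $\min_{D(x_\infty)}h(x_\infty,\cdot)$ reads $0\in\partial_y h(x_\infty,x_\infty)+N_{D(x_\infty)}(x_\infty)$, and \eqref{majmin}(ii) together with $N_{D(x_\infty)}(x_\infty)\subset N_\DD(x_\infty)$ and the sum rule (valid by the regularity assumed in $\SSS$) yield $0\in\partial f(x_\infty)+N_\DD(x_\infty)$. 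The estimates in (II) then follow from the \L ojasiewicz exponent $\theta$ of $F$ through the usual Attouch--Bolte rates (finite termination when $\theta=0$, the geometric rate (a) when $\theta\in(0,\tfrac12]$, and the rate (b) $O(k^{-\gamma})$ when $\theta\in(\tfrac12,1)$), transferred to $\|x_k-x_\infty\|$ via the tail of $\sum_k\|x_{k+1}-x_k\|$.
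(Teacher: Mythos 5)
Your proposal is correct and follows essentially the same route as the paper: the value function $F(x)=h(x,p(x))$ as Lyapunov function, the descent estimate $F(x_k)+\tfrac{\mu}{2}\|x_{k+1}-x_k\|^2\le f(x_k)\le F(x_{k-1})$, the subgradient bound $\dist(0,\partial F(x_k))\le K\|x_{k+1}-x_k\|$ extracted from the qualification condition \eqref{qual}, and the explicit-type KL argument with the one-step index shift absorbed exactly as you describe. The only cosmetic difference is that the paper obtains the inclusion $(v,0)\in\partial\hat h(x,p(x))$ for $v\in\partial F(x)$ by a direct first-order expansion on the dense open set where the semi-algebraic function $F$ is differentiable (via stratification) followed by a closure argument, rather than by invoking the marginal-function subdifferential calculus as you do.
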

\begin{remark}[Coercivity/Divergence]{\rm (a) If in addition  $[f\leq f(x_0)]\cap D$ is bounded, the sequence $x_k$ cannot diverge and converges thus to a critical point.\\
(b) The divergence property $(I)-(i)$ is a positive result, a {\em convergence result}, which does not correspond to a failure of the method but rather {\em to the absence of minimizers in a given zone}.}
\end{remark}

\medskip

Theorem \ref{th:convergence} draws  its strength from the fact that majorization-minimization schemes are ubiquitous in continuous optimization (see Beck and Teboulle (2010) \cite{beck2010gradient}). This is illustrated with SQP methods but other applications can be considered. 

\bigskip

The proof (to be developed in the next section) is not trivial but the ideas can be briefly sketched as follows: 
\begin{itemize}
	\item Study the auxiliary function, the ``value improvement  function":	$$F=\val : \left\{
	\begin{aligned}
	\DD & \to & & \R\\
	x &  \to  & & h(x, p(x)).
    \end{aligned}\right.
	$$ 
	\item Show that there is a non-negative constants $K_1$ such that sequence of iterates satisfies:
		\begin{align*}
			&F(x_k) + K_1 ||x_k - x_{k+1}||^2 \leq f(x_k) \leq F(x_{k-1})
		\end{align*}
	\item Show that for any compact $C$, there is a constant $K_2(C)$ such that if $x_k \in C$, we have:
		\begin{align*}
			&\dist \big(0, \partial F(x_{k})\big) \leq  K_2(C) ||x_{k+1} - x_k||.
		\end{align*}
	\item Despite the {\em explicit type} of the second inequality, one may use KL property (see Section~\ref{sec:tools}) and techniques akin to those presented in Bolte et al. (2013) \cite{bolte2013proximal}, Attouch et al. (2013) \cite{attouch2013convergence} to obtain convergence of the iterative process.
\end{itemize}

\section{Convergence analysis of majorization-minimization procedures}
\label{sec:convergence}
This section is entirely devoted to the exposition of the technical details related to the proof of Theorem~\ref{th:convergence}.
\subsection{Some concepts for nonsmooth and semi-algebraic optimization}
\label{sec:tools}
We hereby recall a few definitions and concepts that structure our main results. In particular, we introduce the notion of a subdifferential and of a KL function, which are the most crucial tools used in our analysis.

\subsubsection{Nonsmooth functions and subdifferentiation}\label{s:nonsmooth}
 A detailed exposition of these notions can be found in Rockafellar and Wets (1998) \cite{rockafellar1998variational}. In what follows, $g$ denotes a proper lower semi-continuous function from $\RR^n$ to $(-\infty, +\infty]$ whose domain is denoted and defined by $\dom g=\big\{x\in \R^n:g(x)<+\infty\big\}$. Recall that $g$ is called proper if $\dom g\neq \emptyset$. 
\begin{definition}[Subdifferentials]{\rm Let $g$ be a proper lower semi\-con\-tinuous func\-tion from $\RR^n$ to $(-\infty, +\infty]$.
	\begin{enumerate}
		\item Let $x \in \dom g$, the {\em Fr\'{e}chet subdifferential of $g$ at $x$} is the subset of vectors $v$ in $\R^n$ that satisfy
			$${\lim\inf}_{y \to x, \: y \neq x}\: \frac{g(y) - g(x) - \left\langle v, y - x \right\rangle}{||x - y||} \geq 0.$$
			When $x \not\in \dom g$, the Fr\'{e}chet subdifferential is empty by definition. The Fr\'{e}chet subdifferential of $g$ at $x$ is denoted by $\hat{\partial} g(x)$.
		\item The {\em limiting subdifferential}, or simply {\em the subdifferential of $g$ at $x$}, is defined by the following closure process:
			$$\partial g(x) = \{v \in \RR^n:\; \exists x_j \to x, g(x_j) \to g(x), u_k \in \hat{\partial} g(x_j), u_j \to v \text{ as } j \to \infty\}.$$
		\item Assume $g$ is finite valued and locally Lipschitz continuous. The function $g$ is said to be {\em subdifferentially regular}, if $\hat{\partial} g(x) = \partial g(x)$ for all $x$ in $\R^n$.
	\end{enumerate}}
\end{definition}

\noindent
Being given a closed subset $C$ of $\RR^n$, its {\em indicator function} $i_C:\R^n\rightarrow(-\infty,+\infty]$ is defined as follows
$$i_C(x)=0 \text{ if }x\in C, \: i_C(x)=+\infty\text{ otherwise}.$$
$C$ is said to be {\em regular} if $\hat{\partial} i_C(x) = \partial i_C(x)$ on $C$. In this case, the {\em normal cone to $C$} is defined by the identity
$$N_C(x)=\partial i_C(x), \forall x \in \RR^n.$$
The {\em distance function to $C$} is defined as 
$$\dist (x,C)=\min \big\{\|x-y\|:y\in C\big\}.$$

\medskip
We recall the two following fundamental results.

\begin{proposition}[Fermat's rule,  critical points, KKT points]\label{crit} We have the following extensions of the classical Fermat's rule:
	\begin{enumerate}
		\item[(i)] When $x$ is a local minimizer of $g$, then $0 \in \partial g(x)$.
		\item[(ii)] If $x$ is a local minimizer of $\PP$, under assumption $\SSS$, then:
 			$$\partial f(x)+N_\DD(x)\ni 0.$$
		\item[(iii)] Assume further that  $\DD$ is of the form
 			$$\DD=\{x\in Q: f_1(x)\leq 0,\ldots,f_m(x)\leq 0\},$$
 			where $Q$ is closed, convex and nonempty and  $f_1,\ldots,f_m:\R^n\to \R$ are $C^1$ functions. For  $x$ in $\DD$, set $I(x)=\{i:f_i(x)=0\}$ and assume that there exists $y\in Q$ such that, \\
 			$$\text{\rm (Robinson QC)}\qquad \langle\nabla f_i(x),y - x\rangle<0, \forall i\in I(x).\qquad \qquad$$\\
			Then $\DD$ is regular, 
 			$$N_\DD(x)=\left\{\sum_{i\in I(x)} \lambda_i \nabla f_i(x): \lambda_i\geq 0, i\in I(x)\right\}+N_Q(x),$$
			and critical points for $\PP$ are exactly KKT points of $\PP$.
	\end{enumerate}
\end{proposition}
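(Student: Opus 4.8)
The plan is to handle the three items in increasing order of difficulty, the first two being structural and the third carrying the real content. For (i), I would argue straight from the definition of the Fr\'echet subdifferential: if $x$ is a local minimizer of $g$, then $g(y)-g(x)\geq 0$ for all $y$ near $x$, so the difference quotient $\frac{g(y)-g(x)-\langle 0,y-x\rangle}{\|x-y\|}$ has nonnegative $\liminf$ as $y\to x$, whence $0\in\hat\partial g(x)\subset\partial g(x)$. For (ii), the observation is that $x$ is a local minimizer of $\PP$ exactly when it is an unconstrained local minimizer of the penalized function $f+i_\DD$, since $i_\DD$ vanishes on $\DD$ and is $+\infty$ off $\DD$. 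Applying (i) to $g=f+i_\DD$ gives $0\in\partial(f+i_\DD)(x)$, and it then remains to invoke the sum rule $\partial(f+i_\DD)(x)=\partial f(x)+N_\DD(x)$. This is precisely where the standing assumptions $\SSS$ are used, as stressed in the ``Role of regularity'' remark: $f$ is locally Lipschitz and subdifferentially regular and $\DD$ is regular, which are exactly the hypotheses guaranteeing the sum rule with equality (cf.\ Rockafellar and Wets \cite{rockafellar1998variational}). This delivers $0\in\partial f(x)+N_\DD(x)$.

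For (iii) I would present $\DD$ as the inverse image of a product set under a smooth map. Setting $F(x)=(x,f_1(x),\ldots,f_m(x))$ and $K=Q\times\R_-^m$, one has $\DD=F^{-1}(K)$, with Jacobian acting by $d\mapsto(d,\langle\nabla f_1(x),d\rangle,\ldots,\langle\nabla f_m(x),d\rangle)$ and adjoint $\nabla F(x)^*(w,\mu)=w+\sum_i\mu_i\nabla f_i(x)$. The normal cone to $K$ at $F(x)$ factors as $N_Q(x)\times N_{\R_-^m}(f(x))$, and the second factor equals $\{\mu:\mu_i\geq 0 \text{ for } i\in I(x),\ \mu_i=0 \text{ otherwise}\}$. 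Since $Q$ and $\R_-^m$ are convex, hence regular, the product $K$ is regular, so the chain rule for normal cones of inverse images (Rockafellar and Wets \cite{rockafellar1998variational}, Theorem 6.14) yields both the regularity of $\DD$ at $x$ and the identity $N_\DD(x)=\nabla F(x)^*N_K(F(x))$, which upon expansion is exactly the asserted expression $\{\sum_{i\in I(x)}\lambda_i\nabla f_i(x):\lambda_i\geq 0\}+N_Q(x)$.

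The key step, and the one I expect to be the main obstacle, is verifying the constraint qualification required by that chain rule, namely that the only $(w,\mu)\in N_K(F(x))$ with $\nabla F(x)^*(w,\mu)=0$ is $(w,\mu)=0$. Here Robinson QC does the work: given $w\in N_Q(x)$ and $\mu_i\geq 0$ for $i\in I(x)$ with $w+\sum_{i\in I(x)}\mu_i\nabla f_i(x)=0$, I would pair this identity with $y-x$, where $y\in Q$ is the Robinson direction. Convexity of $Q$ gives $\langle w,y-x\rangle\leq 0$, while each $\mu_i\langle\nabla f_i(x),y-x\rangle\leq 0$ with the gradient factor strictly negative; since the total sum vanishes and every summand is nonpositive, each $\mu_i$ must vanish, forcing $w=0$ as well. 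This establishes the qualification. Finally, substituting the normal cone formula into the criticality relation $0\in\partial f(x)+N_\DD(x)$ from (ii), and using $\partial f(x)=\{\nabla f(x)\}$ since $f$ is smooth here, one obtains $\nabla f(x)+\sum_{i\in I(x)}\lambda_i\nabla f_i(x)+N_Q(x)\ni 0$ with $\lambda_i\geq 0$; setting $\lambda_i=0$ for $i\notin I(x)$ encodes complementary slackness, so the criticality relation is equivalent to the KKT system, which is the claimed identification of critical points with KKT points.
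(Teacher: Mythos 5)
Your proof is correct and follows essentially the same route as the paper, which handles (i) by Fermat's rule (Rockafellar--Wets, Theorem 10.1), (ii) by the sum rule for $f+i_\DD$ under the regularity hypotheses of $\SSS$, and (iii) by the inverse-image/normal-cone result of Rockafellar--Wets, Theorem 6.14, with the Robinson condition supplying the required constraint qualification. Your write-up simply makes explicit the details the paper delegates to those citations, in particular the pairing-with-$y-x$ argument verifying the qualification condition, which is the same separation-type computation the authors use in their Proposition on set-parameterized indicator functions.
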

\proof (i) is Rockafellar and Wets (1998) \cite[Theorem 10.1]{rockafellar1998variational}). (ii) is obtained by using the sum rule Rockafellar and Wets (1998) \cite[Corollary 10.9]{rockafellar1998variational}. For (iii), regularity and normal cone expression follow from Rockafellar and Wets (1998) \cite[Theorem 6.14]{rockafellar1998variational} (Robinson condition appears there in a generalized form). 
\endproof

Recall that a convex cone $L\subset\R_+^n$ is a nonempty convex set such that $\R_+L\subset L$. Being given a subset $S$ of $\R^n$, the {\em conic hull of  $S$}, denoted $\cone S$ is defined as the smallest convex cone containing $S$. Since a cone always contains $0$, $\cone \emptyset=\{0\}$.

\begin{proposition}[Subdifferential of set-parameterized indicator functions]\label{subdiff}
Let $n_1$, $n_2$, $m$ be positive integers and $g_1,\ldots,g_m:\R^{n_1}\times \R^{n_2}\rightarrow \R$  continuously differentiable functions. Set 
$$C(x)=\left\{ y\in \R^{n_2}:g_i(x,y)\leq 0, \: \: \forall i=1,\ldots,m  \right\}\subset \R^{n_2}, \quad \forall x \in \RR^{n_1},$$
and for any $y\in C(x)$ put $I(x,y)=\{i=1,\ldots,m : g_i(x,y)=0\},$ the set $I(x,y)$ is empty otherwise.
Assume that the following  parametric Mangasarian-Fromovitz qualification condition holds:
 $$\forall (x,y)\in \R^{n_1}\times \R^{n_2},\: \exists d \in \R^{n_1}\times \R^{n_2}, \:\langle\nabla g_i(x,y),d\rangle<0, \forall i\in I(x,y).$$
Consider the real extended-valued function $H:\R^{n_1}\times \R^{n_2}\rightarrow (-\infty,+\infty]$ defined through
$$H(x,y)=\left\{
\begin{aligned}
& i_{C(x)}(y) \text{ whenever $C(x)$ is nonempty}\\
&\\
& +\infty \text{ otherwise.}
\end{aligned}
\right.$$
 Then the subdifferential of $H$ is given by
\begin{equation}\label{soudiff}
\partial H(x,y)=\mbox{\rm cone} \big\{\nabla g_i(x,y): i\in I(x,y) \big\}. 
\end{equation}
\end{proposition}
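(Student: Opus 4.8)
The plan is to recognize that $H$ is nothing but the indicator function of the feasible set of the constraints $g_i\leq 0$ taken \emph{jointly} in the product space $\R^{n_1}\times\R^{n_2}$, and then to read off the answer from the constrained normal-cone formula already recorded in Proposition~\ref{crit}(iii).

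First I would introduce the joint feasible set
$$S=\Big\{(x,y)\in \R^{n_1}\times \R^{n_2}: g_i(x,y)\leq 0,\; i=1,\ldots,m\Big\}$$
and check the global identity $H=i_S$. Indeed, if $(x,y)\in S$ then $g_i(x,y)\leq 0$ for every $i$, hence $y\in C(x)$, so in particular $C(x)\neq\emptyset$ and $H(x,y)=i_{C(x)}(y)=0$. Conversely, if $(x,y)\notin S$ then $y\notin C(x)$, and whether or not $C(x)$ is empty the definition of $H$ forces $H(x,y)=+\infty$; this is the one place where the vacuous case ``$C(x)=\emptyset$'' must be handled explicitly. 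Thus $H$ and $i_S$ coincide everywhere, and therefore $\partial H(x,y)=\partial i_S(x,y)$ for all $(x,y)$.

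Next I would observe that, for $(x,y)\in S$, the set of active constraints of $S$ at $(x,y)$ is exactly $I(x,y)=\{i:g_i(x,y)=0\}$, and that the parametric Mangasarian--Fromovitz condition assumed in the statement is precisely the Robinson/MFQC qualification condition for $S$ at $(x,y)$ when the ambient variable is the pair $(x,y)$. Here the perturbation direction $d\in\R^{n_1}\times\R^{n_2}$ plays the role of $y-x$ in Proposition~\ref{crit}(iii), which is legitimate because the corresponding ``simple'' set is the whole space and hence $d$ is unconstrained; likewise $\nabla g_i(x,y)$ denotes the full gradient in both blocks of variables, consistent with $\partial H$ living in $\R^{n_1}\times\R^{n_2}$.

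It then suffices to apply Proposition~\ref{crit}(iii) in the product space, with $Q=\R^{n_1}\times\R^{n_2}$ so that $N_Q\equiv\{0\}$. That result gives that $S$ is regular and that
$$N_S(x,y)=\partial i_S(x,y)=\left\{\sum_{i\in I(x,y)}\lambda_i\,\nabla g_i(x,y):\lambda_i\geq 0\right\},$$
which is exactly $\cone\{\nabla g_i(x,y):i\in I(x,y)\}$; the interior case $I(x,y)=\emptyset$ is covered by the convention $\cone\emptyset=\{0\}$, which matches $N_S(x,y)=\{0\}$ there. Combining with $\partial H=\partial i_S$ yields the claimed expression \eqref{soudiff}. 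There is no real obstacle beyond the careful verification of the reduction $H=i_S$ (and especially the empty-$C(x)$ case); once this identification is in place, the proposition is an immediate instance of the standard calculus for normal cones to inequality-constrained sets under a Mangasarian--Fromovitz qualification.
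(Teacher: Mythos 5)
Your proof is correct and follows essentially the same route as the paper: both identify $H$ with the indicator function of the joint feasible set $\{(x,y):g_i(x,y)\leq 0,\ i=1,\ldots,m\}$ in the product space and then invoke the standard normal-cone formula under a Mangasarian--Fromovitz type qualification (the paper cites Rockafellar--Wets Theorem~6.14 directly, verifying its abstract constraint qualification via separation, while you reach the same formula through Proposition~\ref{crit}(iii), which itself rests on that theorem). Your explicit handling of the case $C(x)=\emptyset$ in the identification $H=i_S$ is a welcome detail the paper leaves implicit.
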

\begin{proof} For any $(x,y)$ in $\dom H$, set  $G(x,y)=(g_1(x,y),\ldots,g_m(x,y))$.
	Then $H(x,y)=i_{\R_-^m}(G(x,y))$ and $H$ is the indicator of the set $C = \{(x,y) \in \RR^{n_1} \times \RR^{n_2}: \: G(x,y) \in \R_-^m\}$. We will justify the application of the last equality of Rockafellar and Wets (1998) \cite[Theorem 6.14]{rockafellar1998variational}. We fix $(x,y)$ in $\dom H$ such that $G(x,y)\leq 0$  and we set $I= I(x,y)$.  The  abstract qualification constraint required in Rockafellar and Wets (1998) \cite[Theorem 6.14]{rockafellar1998variational}  is equivalent to $\lambda_i \geq 0, \sum_{i \in I} \lambda_i \nabla g_i(x, y) = 0 \Rightarrow \lambda_i = 0$. Using Hahn-Banach  separation  theorem this appears to be equivalent to the parametric MFQC condition. The set $\RR^m_-$ is regular and we can apply Rockafellar and Wets (1998) \cite[Theorem 6.14]{rockafellar1998variational} which assesses that $C$ is regular at $(x,y)$. In this case, the normal cone of $C$ and the sub-differential of $H$ coincide and are given by
	$$\partial H(x,y)= \left\{ \sum_{i=1}^m \lambda_i\nabla g_i(x,y): \lambda \in N_{\R_-^m}(G(x,y)) \right\}=\left\{ \sum_{i\in I} \lambda_i\nabla g_i(x,y): \lambda_i\geq 0\right\}.$$

\end{proof}

\subsubsection{Multivalued mappings}

 A {\em multivalued mapping $F: \R^n\rightrightarrows \R^m$} maps a point $x$ in $\R^n$ to a subset $F(x)$ of $\R^m$. The set
 $$\dom F:=\big\{x\in \R^n: F(x)\neq \emptyset \big\}$$
 is called the {\em domain} of $F$. For instance the subdifferential of a lsc function defines a multivalued mapping
 $\partial f:\R^n\rightrightarrows\R^n$.

Several regularity properties for such mappings are useful in optimization; we focus here on one of the most natural concept: set-valued continuity (see e.g. Dontchev and Rockafellar (2009) \cite[Section 3.B, p. 142]{dontrock}).
\begin{definition}[Continuity of point-to-set mappings]{\rm 
Let $F:\R^n\rightrightarrows \R^m$ and $x$ in $\dom F$.\\
(i) $F$ is called {\em outer semi-continuous at $x$}, if for each sequence $x_j\to x$ and each sequence $y_j\to y$ with $y_j\in F(x_j)$, we have 
$y\in F(x)$.\\
(ii) $F$ is called {\em inner semi-continuous at $x$}, if for all $x_j\to x$ and $y\in F(x)$ there exists a sequence $y_j\in F(x_j)$, after a given term, such that $y_j\to y$.\\
(iii) $F$ is called {\em continuous at $x$} if it is both outer and inner semi-continuous.}
\end{definition}

\subsubsection{The  KL property and some  facts from real semi-algebraic geometry}
KL is a shorthand here for Kurdyka-\L{}ojasiewicz. This property constitutes a crucial tool in our convergence analysis. We consider the nonsmooth version of this property which is given in Bolte et al. (2007) \cite[Theorem 11]{bolte2007clarke} -- precisions regarding concavity of the desingularizing function are given in Attouch et al. (2010) \cite[Theorem 14]{attouch2010proximal}.

\smallskip
 Being given real numbers $a$ and $b$, we set $[a \leq g \leq b] = \{x \in \RR^n:\; a \leq g(x) \leq b\}.$ The sets $[a < g < b]$, $[g<a]$... are defined similarly.

\smallskip

For $\alpha \in (0, +\infty]$, we denote by $\Phi_\alpha$ the class of functions $\varphi:[0,\alpha)\to\R$ that satisfy the following conditions
\begin{itemize}
     \item[(a)] $\varphi(0) = 0$;
	\item[(b)] $\varphi$ is positive, concave and continuous;
	\item[(c)] $\varphi$ is continuously differentiable on $(0, \alpha)$, with $\varphi' > 0$.
\end{itemize}
\begin{definition}[KL property]{\rm
	Let $g$ be a proper lower semi-continuous function from $\RR^n$ to $(-\infty, +\infty]$.
	\begin{enumerate}
	\item[(i)] The function $g$ is said to have the {\em Kurdyka-\L{}ojaziewicz (KL) property} at $\bar{x} \in \dom \partial g$, if there exist $\alpha \in (0, +\infty]$, a neighborhood $V$ of $\bar{x}$ and a function $\varphi \in \Phi_\alpha$ such that
		\begin{equation}\label{loja}\varphi'(g(x) - g(\bar{x})) \, \dist (0, \partial g(x)) \geq 1\end{equation}
		for all $x \in V \cap [g(\bar{x}) < g(x) < \alpha]$.
	\item[(ii)] The function $g$ is said to be a {\em KL function} if it has the KL property at each point of $\dom\partial g$.
	\end{enumerate}}
\end{definition}
KL property basically asserts that a function can be made sharp by a reparameterization of its values. This appears clearly when  $g$ is differentiable and $g(\bar{x}) = 0$, since in this case \eqref{loja} writes:
$$\|\nabla \big(\varphi\circ g\big)(x)\|\geq 1, \quad \forall x \in V \cap [0 < g(x) < \alpha].$$
The function $\varphi$ used in this parameterization is  called a {\em desingularizing function}. As we shall see such functions are ubiquitous in practice, see Attouch et al. (2010) \cite{attouch2010proximal}, Attouch et al. (2013) \cite{attouch2013convergence}.

When $\varphi$ is of the form $\varphi(s)=cs^{1-\theta}$ with $c>0$ and $\theta\in [0,1)$, the number $\theta$ is called {\em a \L ojasiewicz exponent. }
\begin{definition}[Semi-algebraic sets and functions]{\rm 
	\text{ }
	\begin{itemize}
		\item[(i)] A set $A \subset \RR^n$ is said to be semi-algebraic if there exist a finite number of real polynomial functions $g_{ij}, h_{ij}\colon \RR^n \to \RR$ such that
			$$A = \bigcup_{i=1}^p \bigcap_{j = 1}^q \left\lbrace y \in \RR^n:\; g_{ij}(y) = 0, h_{ij}(y) > 0  \right\rbrace$$
		\item[(ii)] A  mapping $G: \RR^n \rightrightarrows \RR^m$ is a said to be semi-algebraic if its graph
			$$\mbox{\rm graph}\,G=\Big\{ (x,y) \in \RR^{n+m}:\; y\in G(x) \Big\}$$
			is a semi-algebraic subset of $\RR^{n+m}$.
			
			Similarly, a real extended-valued function $g: \RR^n \rightarrow (-\infty,+\infty]$ is semi-algebraic if its graph $\big\{ (x,y) \in \RR^{n+1}:\; y=g(x) \big\}$ is semi-algebraic.	\end{itemize}}
\end{definition}

For this class of functions, we have the following result which provides a vast field of applications for our method -- see also Section~\ref{sec:representable}.
\begin{theorem}[Bolte et al. (2007) \cite{bolte2007clarke}, Bolte et al. (2007) \cite{bolte2007lojasiewicz}]\label{th:KL}
Let $g$ be a proper lower semi-continuous function from $\RR^n$ to $(-\infty, +\infty]$. If $g$ is semi-algebraic, then $g$ is a KL function.
\end{theorem}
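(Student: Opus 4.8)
The statement is the semi-algebraic case of the Kurdyka--\L ojasiewicz inequality, and the plan is to reduce the verification of \eqref{loja} at a fixed $\bar x\in\dom\partial g$ to a one-variable monotonicity analysis, then to build the desingularizing function $\varphi$ by integrating the reciprocal of a minimal-slope function. First I would normalize $g(\bar x)=0$ (replacing $g$ by $g-g(\bar x)$, which is still semi-algebraic) and work in a closed ball $\bar B=\bar B(\bar x,\rho)$. The key structural fact is that $\mbox{\rm graph}\,\partial g$ is semi-algebraic: the Fr\'echet subdifferential $\hat\partial g$ is defined by a first-order formula over the ordered field of reals involving the (semi-algebraic) graph of $g$, and the limiting subdifferential $\partial g$ is obtained from it by a closure/limit process which is again first-order; quantifier elimination (Tarski--Seidenberg) then yields semi-algebraicity. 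Consequently $x\mapsto\dist(0,\partial g(x))=\inf\{\|v\|:v\in\partial g(x)\}$ is semi-algebraic, being a projection of a semi-algebraic set.

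Next I would introduce the minimal-slope (``valley'') function
$$\nu(r)=\inf\big\{\dist(0,\partial g(x)):x\in\bar B,\ g(x)=r\big\},\qquad r\in(0,\alpha),$$
which is a semi-algebraic function of a single real variable. By the monotonicity lemma (one-dimensional cell decomposition for semi-algebraic sets), there is $\alpha'\in(0,\alpha]$ such that $\nu$ is continuous and monotone on $(0,\alpha')$, with a Puiseux asymptotics $\nu(r)=c\,r^{\theta}+o(r^{\theta})$, $c>0$, near $0$. Two facts must then be established. First, $\nu(r)>0$ on $(0,\alpha')$: were $\nu$ to vanish, the curve selection lemma would produce a semi-algebraic $C^1$ arc $\gamma$ with $g(\gamma(t))=t$ and $0\in\partial g(\gamma(t))$, and the nonsmooth chain rule / mean-value inequality for semi-algebraic locally Lipschitz functions along arcs would force $g\circ\gamma$ to be constant, contradicting $g(\gamma(t))=t$. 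Second, and crucially, the \L ojasiewicz exponent satisfies $\theta<1$; I would obtain this from the mean-value inequality applied between nearby level sets, which bounds the variation of $g$ along a steepest-type arc by $\nu$ and forces the integrability of $1/\nu$ near $0$.

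With these in hand I would define the desingularizing function by
$$\varphi(s)=\int_0^{s}\frac{dr}{\nu(r)},\qquad s\in[0,\alpha'),$$
which is finite because $\theta<1$, satisfies $\varphi(0)=0$, is positive and continuously differentiable on $(0,\alpha')$ with $\varphi'=1/\nu>0$, and is concave because $\nu$ is nondecreasing on the relevant branch; hence $\varphi\in\Phi_{\alpha'}$. Choosing $V=B(\bar x,\rho)$, for every $x\in V$ with $0<g(x)<\alpha'$ one then gets
$$\varphi'(g(x))\,\dist(0,\partial g(x))=\frac{\dist(0,\partial g(x))}{\nu(g(x))}\ \geq\ \frac{\nu(g(x))}{\nu(g(x))}=1,$$
which is exactly \eqref{loja}. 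Since $\bar x\in\dom\partial g$ was arbitrary, $g$ is a KL function.

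The main obstacle is the pair of analytic facts about $\nu$ --- its positivity and the bound $\theta<1$ --- both of which rest on the curve selection lemma together with a chain-rule/mean-value inequality for the limiting subdifferential of a semi-algebraic function along a semi-algebraic arc. In the smooth setting this is Kurdyka's argument using the ordinary chain rule; the nonsmooth refinement, ensuring that $(g\circ\gamma)'(t)$ is suitably controlled by elements of $\partial g(\gamma(t))$, is the delicate technical ingredient, and it is precisely what the cited works of Bolte et al. supply.
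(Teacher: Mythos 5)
The paper itself contains no proof of this theorem: it is quoted from the cited references (Theorem~11 of Bolte et al.\ \cite{bolte2007clarke}, together with \cite{bolte2007lojasiewicz}), so the comparison must be with the proof given there. Your sketch faithfully reconstructs that proof's architecture, which is Kurdyka's scheme adapted to the nonsmooth case: semi-algebraicity of $\mbox{\rm graph}\,\partial g$ via Tarski--Seidenberg, the one-variable valley (talweg) function $\nu$, the monotonicity/Puiseux lemma, curve selection to obtain positivity of $\nu$ and integrability of $1/\nu$, and the desingularizer $\varphi(s)=\int_0^s \nu(r)^{-1}\,dr$. So the route is the right one --- but two steps, as written, are genuine gaps rather than routine details.

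First and decisively, the chain-rule step is not merely ``delicate'': the claim that $(g\circ\gamma)'(t)$ is controlled by elements of $\partial g(\gamma(t))$ is \emph{false} for a general lsc function along a general $C^1$ arc. What makes it true here is the projection formula for Whitney-stratifiable functions --- every limiting (indeed Clarke) subgradient of $g$ at $x$ projects, on the tangent space of the stratum through $x$, onto $\nabla\bigl(g|_S\bigr)(x)$, so that $\|\nabla (g|_S)(x)\|\leq \mathrm{dist}(0,\partial g(x))$ --- combined with the fact that the selected arc may be taken inside a single stratum. You acknowledge deferring exactly this ingredient to ``the cited works,'' but that projection formula \emph{is} the content of \cite{bolte2007clarke}; as a proof your proposal therefore reduces the theorem to itself at its crux. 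Second, concavity of $\varphi$ does not follow as stated: the monotonicity lemma only yields that $\nu$ is monotone near $0$, and if $\nu$ is nonincreasing (Puiseux exponent $\theta\leq 0$) then $\varphi'=1/\nu$ is nondecreasing, making your $\varphi$ convex. In that regime $\nu$ is bounded below near $0$, so $\mathrm{dist}(0,\partial g(x))\geq c>0$ on the relevant set and a linear $\varphi(s)=s/c$ works; this short case analysis (or the replacement of $\nu$ by a nondecreasing semi-algebraic minorant) is needed and missing. Two smaller points of the same kind: since $g$ is only lsc, closedness of $\mbox{\rm graph}\,\partial g$ and the curve-selection step require $g$-attentive convergence ($x_j\to x$ \emph{with} $g(x_j)\to g(x)$); and the infimum defining $\nu(r)$ need not be attained, so the arc should be selected inside a set such as $\bigl\{x\in\bar B:\ g(x)=r,\ \mathrm{dist}(0,\partial g(x))\leq 2\nu(r)\bigr\}$ rather than on an exact talweg.
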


\subsection{An auxiliary Lyapunov function: the value  function}
\subsubsection*{Basic estimations} \begin{lemma}[Descent lemma]
\label{lem:descent}
Let $g:\R^n\to \R$ be a differentiable function with $L$-Lipschitz continuous gradient. Then for all $x$ and $y$ in $\RR^n$,
$$|g(y) - g(x) - \langle \nabla g(x),y - x\rangle| \leq \frac{L}{2} ||x - y||^2$$
\end{lemma}
The proof is elementary, see e.g. Nesterov (2004) \cite[Lemma 1.2.3]{nesterov2004introductory}.

\begin{lemma}[Quadratic growth of the local models] 
\label{lem:strConv}
$$\text{Fix $x$ in $\DD$. }\text{Then: }\quad h(x,y) - h(x, p(x)) \geq \frac{\mu}{2}||y - p(x)||^2, \quad \forall y \in D(x).$$
\end{lemma}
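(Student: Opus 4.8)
The plan is to rely on only three ingredients and to avoid any subdifferential calculus by arguing along a segment: the $\mu$-strong convexity of $h(x,\cdot)$ provided by \eqref{majmin}(iv), the convexity of the feasible set $D(x)$ from \eqref{cont}, and the defining property of $p(x)$ as the minimizer of $h(x,\cdot)$ over $D(x)$.

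First I would fix $x\in\DD$, abbreviate $g=h(x,\cdot)$ and $\bar y=p(x)$, and record the segment form of $\mu$-strong convexity (which says that $g-\frac{\mu}{2}\|\cdot\|^2$ is convex): for all $y,z$ and all $t\in[0,1]$,
$$g\big((1-t)z + ty\big) \leq (1-t)\,g(z) + t\,g(y) - \frac{\mu}{2}\,t(1-t)\,\|y-z\|^2.$$
Next I would invoke the convexity of $D(x)$: for any $y\in D(x)$ and $t\in[0,1]$ the point $(1-t)\bar y + ty$ lies in $D(x)$, so by minimality of $\bar y$ we have $g\big((1-t)\bar y + ty\big)\geq g(\bar y)$. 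Substituting $z=\bar y$ in the segment inequality then yields
$$g(\bar y) \leq (1-t)\,g(\bar y) + t\,g(y) - \frac{\mu}{2}\,t(1-t)\,\|y-\bar y\|^2,$$
and after cancelling $(1-t)g(\bar y)$ and dividing by $t>0$ this becomes
$$g(\bar y) \leq g(y) - \frac{\mu}{2}(1-t)\,\|y-\bar y\|^2.$$

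Finally, letting $t\to 0^+$ gives $g(\bar y)\leq g(y) - \frac{\mu}{2}\|y-\bar y\|^2$, which is precisely the asserted bound $h(x,y)-h(x,p(x))\geq\frac{\mu}{2}\|y-p(x)\|^2$ for every $y\in D(x)$. There is no genuine obstacle in this argument; the only points to be careful about are that $p(x)$ is well defined (existence and uniqueness of the minimizer follow from strong convexity together with the closedness and convexity of $D(x)$ in \eqref{cont}), and that the interpolating point stays feasible, which is exactly why convexity of $D(x)$ is used. As an alternative one-line route, I could instead use the first-order optimality condition for the convex subproblem, namely that there exists $v\in\partial_y h(x,p(x))$ with $\langle v, y-p(x)\rangle\geq 0$ for all $y\in D(x)$, and combine it with the subgradient form of $\mu$-strong convexity to reach the same conclusion.
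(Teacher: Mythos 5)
Your proof is correct, and it takes a mildly different route from the paper's. The paper argues via the auxiliary function $y \mapsto h(x,y) - \frac{\mu}{2}\|y - p(x)\|^2$: by strong convexity this is convex on $D(x)$, and since the added quadratic has vanishing gradient at $p(x)$, the first-order optimality condition for $p(x)$ carries over, so $p(x)$ also minimizes the auxiliary function over $D(x)$ and the inequality drops out. You instead avoid any first-order optimality condition or subdifferential calculus by working directly with the segment form of $\mu$-strong convexity, using convexity of $D(x)$ only to keep the interpolated point $(1-t)p(x)+ty$ feasible, and then letting $t\to 0^+$. Both arguments use exactly the same three ingredients (strong convexity of $h(x,\cdot)$, convexity of $D(x)$, minimality of $p(x)$); yours is the more elementary and self-contained of the two, while the paper's is shorter once one accepts the variational-inequality characterization of the constrained minimizer. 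Your closing remark about the alternative one-line route via $\langle v, y - p(x)\rangle \geq 0$ for some $v \in \partial_y h(x,p(x)) + N_{D(x)}(p(x))$ combined with the subgradient form of strong convexity is essentially the paper's argument, so you have in effect identified both proofs.
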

\begin{proof}
Since $y \to h(x,y)$ is $\mu$ strongly convex, the function
$$D(x)\ni y \to h(x,y) - \frac{\mu}{2}||y - p(x)||^2$$
is convex. Since $p(x)$ minimizes $y \to h(x,y)$ over $D(x)$, it also minimizes $y \to h(x,y) - \frac{\mu}{2}||y - p(x)||^2$. This follows by writing down the first order optimality condition for $p(x)$ (convex setting) and by using the convexity of $y \to h(x,y) - \frac{\mu}{2}||y - p(x)||^2$. The inequality follows readily.
\end{proof}

\begin{lemma}[Descent property]For all $x$ in $\DD$, 
\begin{align}
\label{eq_link}
f(x) &= h(x,x) \geq h(x,p(x))+\frac{\mu}{2} ||x - p(x)||^2 \geq f(p(x)) + \frac{\mu}{2} ||x - p(x)||^2.
\end{align}
\end{lemma}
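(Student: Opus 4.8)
The plan is to establish the three-term chain in \eqref{eq_link} by substituting directly into the axioms of the local model, with the quadratic growth estimate of Lemma~\ref{lem:strConv} as the only non-routine ingredient. Fix $x\in\DD$ throughout.

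I would first dispatch the two endpoints, which are pure bookkeeping. The equality $f(x)=h(x,x)$ is exactly the tangency condition, item (i) of the upper-model assumptions \eqref{majmin}. At the other end, since $p(x)\in D(x)$ by the very definition of the iteration mapping \eqref{itmapping}, the majorization condition (item (iii) of \eqref{majmin}, which holds for every $y\in D(x)$) applied at $y=p(x)$ gives $h(x,p(x))\geq f(p(x))$; adding the common nonnegative term $\tfrac{\mu}{2}\|x-p(x)\|^2$ to both sides then produces the rightmost inequality of \eqref{eq_link}.

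The heart of the argument is the middle inequality $h(x,x)\geq h(x,p(x))+\tfrac{\mu}{2}\|x-p(x)\|^2$. Here I would invoke Lemma~\ref{lem:strConv} with the test point $y=x$. The one thing to verify is that $x$ is an admissible test point, i.e. that $x\in D(x)$: this is built into the inner-constraints assumptions \eqref{cont}, where $D(x)$ is an inner convex approximation of $\DD$ anchored at the base point $x$ (indeed, the very writing of $N_{D(x)}(x)$ presupposes $x\in D(x)$). Granting $x\in D(x)$, Lemma~\ref{lem:strConv} evaluated at $y=x$ yields $h(x,x)-h(x,p(x))\geq\tfrac{\mu}{2}\|x-p(x)\|^2$, which is precisely the claimed middle inequality.

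Chaining the three displays gives \eqref{eq_link}. I do not anticipate any genuine obstacle: the only subtlety is the essentially definitional observation that the current iterate lies in its own model set $D(x)$, which legitimizes taking $y=x$ in the strong-convexity estimate; every remaining step is an immediate substitution into the model axioms \eqref{majmin} and the definition \eqref{itmapping} of $p(x)$.
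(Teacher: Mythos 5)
Your proof is correct and follows essentially the same route as the paper: the middle inequality via Lemma~\ref{lem:strConv} evaluated at $y=x$, and the outer relations from items (i) and (iii) of \eqref{majmin}. You are in fact slightly more careful than the paper in flagging that $x\in D(x)$ is needed to legitimize the test point $y=x$ (the paper uses this implicitly), which is a fair observation even though the stated conditions \eqref{cont} only presuppose it rather than formally asserting it.
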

\begin{proof}
From Lemma~\ref{lem:strConv}, we have for all $x$ in $\DD$ that
$$h(x,x) - h(x,p(x)) \geq \frac{\mu}{2} ||x - p(x)||^2.$$
Therefore from the fact that $h(x,\cdot)$ is an upper model for $f$ on $D(x)$, we infer
\begin{align}
f(x) &= h(x,x) \geq h(x,p(x))+ \frac{\mu}{2} ||x - p(x)||^2  \geq f(p(x)) + \frac{\mu}{2} ||x - p(x)||^2.
\end{align}
\end{proof}

\subsubsection*{Iteration mapping} For any fixed $x$ in $\DD$, we recall that the iteration mapping is defined through:
$$p(x) := {\argmin}\big\{h(x, y):y\in D(x)\}.$$

\begin{lemma}[Continuity of the iteration mapping]
The iteration function 	$p$ is continuous (on $\DD$).
	\label{lem:continuity}
\end{lemma} 

\begin{proof} Let $x$ be a point in $\cal D$ and let $x_j\in \DD$ be a sequence converging to $x$. Fix $y\in D(x)$ and let $y_j$ be a sequence of points such that $y_j\in D(x_j)$ and $y_j\to y$ (use the inner semi-continuity of $D$). We prove first that $p(x_j)$ is bounded.  To this end, observe that
\begin{equation}\label{majj}
h(x_j,p(x_j))+\frac{\mu}{2}\|y_j-p(x_j)\|^2\leq h(x_j,y_j).
\end{equation}
Recall that $h(x_j,p(x_j))\geq f(p(x_j))\geq \inf_\DD f>-\infty.$ Thus 
$$\frac{\mu}{2}\|y_j-p(x_j)\|^2\leq h(x_j,y_j)-\inf_\DD f$$
and $p(x_j)$ is bounded by continuity of $h$.
Denote by $\pi$ a cluster point of $p(x_j)$. Observe that since $p(x_j)\in D(x_j)$,  the outer semi-continuity of $D$ implies that $\pi\in D(x)$. Passing to the limit in \eqref{majj} above, one obtains 
$$
h(x,\pi)+\frac{\mu}{2}\|y-\pi\|^2\leq h(x,y).
$$
Since this holds for  arbitrary $y$ in $D(x)$, we have established that $\pi$ minimizes $h(x,\cdot)$ on $D(x)$, that is 
$\pi=p(x)$. This proves that $p$ is continuous.\end{proof}

\begin{lemma}[Fixed points of the iteration mapping] Let $x$ be in $\DD$ such that $p(x)=x$. Then $x$ is critical for $\PP$ that is 
$$\partial f(x)+N_\DD(x)\ni0.$$
\end{lemma}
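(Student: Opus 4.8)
The plan is to exploit the first-order optimality condition satisfied by the minimizer of the surrogate sub-problem and then to transfer it to the original data through the defining properties of the local models. First I would record that the hypothesis $p(x)=x$ means precisely that $x$ minimizes $y\mapsto h(x,y)$ over $D(x)$, or equivalently that $x$ is a global minimizer of the function $\hat h(x,\cdot)=h(x,\cdot)+i_{D(x)}(\cdot)$. By Fermat's rule (Proposition~\ref{crit}(i)) this yields $0\in\partial_y\hat h(x,y)\big|_{y=x}$.

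The second step is to split this subdifferential. Since for fixed $x\in\DD$ the function $h(x,\cdot)$ is real-valued, continuous and ($\mu$-strongly) convex by \eqref{majmin}(iv), and since $D(x)$ is closed and convex by \eqref{cont}, so that $i_{D(x)}$ is a proper convex function, the continuity of $h(x,\cdot)$ guarantees that the convex sum rule applies without any further qualification. Hence $\partial_y\hat h(x,x)=\partial_y h(x,x)+N_{D(x)}(x)$, and the optimality condition becomes $0\in\partial_y h(x,x)+N_{D(x)}(x)$.

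Finally I would invoke the two compatibility assumptions linking the models to the data. Property \eqref{majmin}(ii) gives the subgradient inclusion $\partial_y h(x,y)\big|_{y=x}\subset\partial f(x)$, while the inner-approximation assumption \eqref{cont} gives the normal-cone inclusion $N_{D(x)}(x)\subset N_\DD(x)$ for $x\in\DD$. Combining these with the previous display yields $0\in\partial_y h(x,x)+N_{D(x)}(x)\subset\partial f(x)+N_\DD(x)$, which is exactly the asserted criticality $\partial f(x)+N_\DD(x)\ni0$.

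The argument is short and essentially bookkeeping; the only point requiring genuine care is the sum rule in step two. The subtlety is that the optimality condition is naturally expressed through the (limiting) subdifferential of the nonsmooth function $\hat h(x,\cdot)$, whereas the conclusion needs the separate pieces $\partial_y h$ and $N_{D(x)}$. What makes this harmless here is convexity together with the everywhere-finiteness and continuity of $h(x,\cdot)$: these ensure that the Fr\'echet, limiting and convex subdifferentials all coincide and that the additivity $\partial(g_1+g_2)=\partial g_1+\partial g_2$ holds at every point of $D(x)$, so that no constraint qualification on $D(x)$ itself is needed at this stage.
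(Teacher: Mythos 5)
Your proof is correct and follows essentially the same route as the paper: write the first-order optimality condition for the sub-problem, apply the convex sum rule to get $0\in\partial_y h(x,x)+N_{D(x)}(x)$, and then transfer via the inclusions $\partial_y h(x,\cdot)|_{y=x}\subset\partial f(x)$ from \eqref{majmin}(ii) and $N_{D(x)}(x)\subset N_\DD(x)$ from \eqref{cont}. The extra care you take in justifying the sum rule through the continuity of $h(x,\cdot)$ is a welcome elaboration of what the paper states in one line, but it is not a different argument.
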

\begin{proof} Using the optimality condition and the sum rule for subdifferential of convex functions one has
\begin{equation}\label{optmodel}
\partial_yh(x,p(x))+N_{D(x)}\ni 0.
\end{equation}
By assumption \eqref{majmin} (ii), we have $\partial_yh(x,p(x))=\partial_yh(x,x)\subset \partial f(x)$. On the other hand $D(x)\subset \DD$ and $N_{D(x)}\,(x)\subset N_\DD(x)$, by \eqref{cont}. Using these inclusions in \eqref{optmodel} yields the result.

\end{proof}

\subsubsection*{Value  function}
The value function is defined through
$$
F=\val:\left\{\begin{array}{lll}\R^n& \longrightarrow&  (-\infty,+\infty] \\
& \\
x & \longrightarrow & h\big(x,p(x)\big).
\end{array}\right.
$$
 Being given $x$ in $\R^n$, and a value $f(x)=h(x,x)$, it measures the progress made not on the objective $f$, but on the value of the model. 

Tarski-Seidenberg theorem asserts a linear projection of a semi-algebraic set is semi-algebraic set. This implies that the class of semi-algebraic functions is closed under many operations, such as addition, multiplication, composition, inverse, projection and partial minimization (see  Bochnak et al. (2003) \cite{coste} and Attouch et al. (2013) \cite[Theorem 2.2]{attouch2013convergence} for an illustration in optimization). Applying standard techniques of semi-algebraic geometry, we obtain therefore: 
\begin{lemma}[Semi-algebraicity of the value function]\label{lem:FLocLip}
 If $f$, $h$, ${\mathcal D}$ are semi-al\-gebraic then $F$ is semi-algebraic.
\end{lemma}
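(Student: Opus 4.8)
The plan is to show that the value function $F(x) = h(x, p(x))$ is semi-algebraic by expressing it as a partial minimization of a semi-algebraic function over a semi-algebraic set-valued constraint, and then invoking the stability of the semi-algebraic class under partial minimization (a consequence of the Tarski--Seidenberg theorem, as recalled just before the statement). The key observation is that $F$ is, by definition, the optimal value map of the parametric problem $\mathcal{P}(x)$, and optimal value maps of semi-algebraic parametric programs are semi-algebraic.

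First I would record that the hypotheses give us: $f:\R^n \to \R$ semi-algebraic, $h:\R^n \times \R^n \to \R$ semi-algebraic, and $D:\R^n \rightrightarrows \R^n$ semi-algebraic (i.e. its graph $\mathrm{graph}\, D = \{(x,y): y \in D(x)\}$ is a semi-algebraic subset of $\R^n \times \R^n$). I would then introduce the auxiliary function
$$
\hat h(x,y) = h(x,y) + i_{D(x)}(y),
$$
and note that its graph is semi-algebraic: on $\mathrm{graph}\, D$ it equals the graph of $h$ (a semi-algebraic set, since $h$ is semi-algebraic), and the indicator term contributes $+\infty$ off $\mathrm{graph}\,D$. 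More precisely, the hypograph/epigraph description shows that $(x,y,t)$ with $t = \hat h(x,y)$ is characterized by the semi-algebraic conditions $y \in D(x)$ and $t = h(x,y)$, so $\hat h$ is semi-algebraic.

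Next I would express $F$ as a partial infimum:
$$
F(x) = \inf_{y} \hat h(x,y) = \inf\{ h(x,y) : y \in D(x)\}.
$$
Because $h(x,\cdot)$ is $\mu$-strongly convex on the closed convex nonempty set $D(x)$ for $x \in \DD$ (assumption \eqref{majmin}(iv) and \eqref{cont}), the infimum is attained uniquely at $p(x)$, so $F(x) = h(x,p(x))$ is exactly this partial minimization and is finite on $\DD$. The epigraph of a partial infimum is the projection of the epigraph of $\hat h$ along the $y$-coordinate, and Tarski--Seidenberg guarantees that the linear projection of a semi-algebraic set is semi-algebraic; hence $\mathrm{epi}\,F$ is semi-algebraic, and therefore so is $F$. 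One can equally phrase this through the semi-algebraicity of the argmin mapping $p$ (whose graph $\{(x,y): y \in D(x),\ h(x,y) \leq h(x,z)\ \forall z \in D(x)\}$ is semi-algebraic after eliminating the universally quantified $z$ via Tarski--Seidenberg), combined with semi-algebraicity of $h$ and composition stability, to conclude $F(x) = h(x,p(x))$ is semi-algebraic.

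I do not expect a genuine obstacle here: the result is a direct application of the quantifier-elimination/projection stability already cited in the text. The only point requiring minor care is the handling of the extended-valued indicator $i_{D(x)}$ and the bookkeeping of the domain $\DD \subset \dom D$, ensuring that the partial-minimization operation is applied to a legitimately semi-algebraic extended-valued function rather than to a raw formula; this is routine given the explicit semi-algebraic descriptions of $\mathrm{graph}\,D$ and $h$.
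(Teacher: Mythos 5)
Your proposal is correct and follows essentially the same route as the paper, which proves the lemma by observing that $F$ is the optimal value of the parametric problem $\mathcal{P}(x)$, i.e.\ a partial minimization of the semi-algebraic function $\hat h$, and then invoking the stability of semi-algebraic functions under projection/partial minimization via the Tarski--Seidenberg theorem. Your write-up simply makes explicit (attainment of the infimum at $p(x)$, semi-algebraicity of $\mathrm{graph}\,D$ and of $\hat h$) what the paper leaves as ``standard techniques of semi-algebraic geometry.''
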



Let $\mathcal{D'}$ denote the domain where $F$ is differentiable. By standard stratification results, this set contains a dense {\em finite} union of open sets (a family of strata of maximal dimension, see e.g. Van Den Dries and Milller (1996) \cite[4.8]{Dries-Miller96}, see also Ioffe (2009) \cite[Theorem 2.3]{ioffe2009invitation} for a self contained exposition). Thus we have:
\begin{equation}\label{strat}
\mathcal{\mbox{int}\, D'} \text{ is dense in }\mathcal{D}.
\end{equation} We now have the following estimates

\begin{lemma}[Subgradient bounds]
\label{lem:stepCond}
Let $C \subset \mathcal{D}$ be a bounded set. Then there exists $K\geq 0$ such that $\forall x \in \mathcal{D'}\cap C$
\begin{equation}\label{bound0}
||\nabla F(x)|| \leq K ||p(x) - x||.
\end{equation}
As a consequence 
\begin{equation}\label{bound}
\dist(0,\partial F(x)) \leq K||p(x) - x||, \, \forall x \in \mathcal{D}\cap C.
\end{equation}
\end{lemma}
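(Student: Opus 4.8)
The plan is to prove the differentiable estimate \eqref{bound0} first, and then deduce the subdifferential estimate \eqref{bound} by a closure/limiting argument. The key observation is that the value function $F(x)=h(x,p(x))$ is a \emph{partial-value} function of the parametrized problem $\PPx$, so its gradient at a point of differentiability should be expressible through the partial derivative $\nabla_x h$ evaluated at the minimizer $p(x)$ — with the $y$-derivative term dropping out precisely because $p(x)$ is optimal for the inner problem. First I would fix a point $x\in\mathcal{D'}\cap C$. Since $F$ is differentiable there, I want to relate $\nabla F(x)$ to the optimality data of the subproblem. The natural tool is an envelope-type identity: writing $\hat h(x,y)=h(x,y)+i_{D(x)}(y)$, optimality of $p(x)$ gives $(0)\in\partial_y\hat h(x,p(x))$, so the full subgradient of $(x,y)\mapsto\hat h$ at $(x,p(x))$ contains a vector of the form $(v,0)$ whose first block $v$ governs the variation of the value in $x$. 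The qualification assumption \eqref{qual} is exactly engineered for this: it asserts that for $(v,0)\in\partial\hat h(x,p(x))$ with $x\in\mathcal{D}\cap C$ and $p(x)\in D(x)\cap C$ one has $\|v\|\le K(C)\|x-p(x)\|$.

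The main technical step is therefore to justify that $\nabla F(x)$ (when it exists) is such a first-block component $v$. I would argue as follows. Because $F$ is obtained from $\hat h$ by partial minimization in $y$ over $D(x)$, and because $p$ is continuous (Lemma~\ref{lem:continuity}) with the inner problem strongly convex (so the minimizer is unique), standard parametric-optimization/sensitivity results give that any subgradient of $F$ at $x$ is of the form $v$ with $(v,0)\in\partial\hat h(x,p(x))$; at a point of differentiability this pins down $\nabla F(x)=v$ for some such $v$. Combining this with \eqref{qual} yields
$$\|\nabla F(x)\|=\|v\|\le K(C)\,\|x-p(x)\|=K\,\|p(x)-x\|,$$
which is \eqref{bound0} with $K:=K(C)$. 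The delicate part of this step is the sensitivity identity linking $\partial F$ to the $(v,0)$-slice of $\partial\hat h$; this is where I expect the real work to lie, since one must handle the set-valued constraint $y\in D(x)$ carefully and invoke the continuity of $D$ together with strong convexity to control the minimizer. This is the main obstacle.

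For the passage from \eqref{bound0} to \eqref{bound}, I would use the density statement \eqref{strat}, namely that $\operatorname{int}\mathcal{D'}$ is dense in $\mathcal{D}$. Given an arbitrary $x\in\mathcal{D}\cap C$, pick a sequence $x_j\in\mathcal{D'}$ with $x_j\to x$. The estimate \eqref{bound0} gives $\|\nabla F(x_j)\|\le K\|p(x_j)-x_j\|$; by continuity of $p$ (Lemma~\ref{lem:continuity}) the right-hand side converges to $K\|p(x)-x\|$, while the gradients $\nabla F(x_j)$ form a bounded sequence and hence admit a cluster point $v^\star$ lying in the limiting subdifferential $\partial F(x)$ by definition of the limiting subdifferential. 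Since this $v^\star$ satisfies $\|v^\star\|\le K\|p(x)-x\|$, we conclude $\dist(0,\partial F(x))\le\|v^\star\|\le K\|p(x)-x\|$, establishing \eqref{bound}. A minor care point is ensuring $F(x_j)\to F(x)$ along the chosen sequence so the limiting-subdifferential closure applies cleanly; this follows from continuity of $h$ and of $p$, which make $F=h(\cdot,p(\cdot))$ continuous on $\mathcal{D}$.
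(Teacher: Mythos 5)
Your proposal is correct and follows essentially the same route as the paper: show that $(\nabla F(\bar x),0)\in\partial\hat h(\bar x,p(\bar x))$ at points of differentiability, invoke the qualification assumption \eqref{qual} to get \eqref{bound0}, and then pass to \eqref{bound} via density of $\mbox{int}\,\mathcal{D}'$, continuity of $p$ and $F$, and the closure defining the limiting subdifferential. The step you single out as the main obstacle is handled in the paper by a two-line envelope inequality, $\hat h(\bar x+\delta,p(\bar x)+\mu)\geq h(\bar x+\delta,p(\bar x+\delta))=F(\bar x)+\langle\nabla F(\bar x),\delta\rangle+o(\|\delta\|)=\hat h(\bar x,p(\bar x))+\langle\nabla F(\bar x),\delta\rangle+o(\|\delta\|)$, which exhibits $(\nabla F(\bar x),0)$ directly as a Fr\'echet subgradient of $\hat h$ at $(\bar x,p(\bar x))$; this is exactly the standard marginal-function sensitivity fact you cite, so no real extra work is needed there.
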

\begin{proof}
Fix $\bar{x}$ in $\mbox{int}\,\mathcal{D'}\cap C$ and let  $\delta$ and $\mu$ be in $\R^n$. Then
\begin{eqnarray*}
\hat h(\bar{x} + \delta, p(\bar{x}) + \mu)) & = & h(\bar{x} + \delta, p(\bar{x}) + \mu) +i_{D(\bar x+\delta)}(p(\bar x)+\mu)\\
 & \geq &  h(\bar{x} + \delta, p(\bar{x} + \delta))\\
&  = & h(\bar{x}, p(\bar{x})) + \left\langle\nabla F(\bar{x}), \delta \right\rangle+ o(||\delta||)\\
&  = & \hat h(\bar{x}, p(\bar{x})) + \left\langle\nabla F(\bar{x}), \delta \right\rangle+ o(||\delta||).\\
\end{eqnarray*}
This implies that $(\nabla F(\bar{x}), 0) \in \partial \hat h(\bar{x},p(\bar{x}))$. Since $C$ is bounded, the qualification assumption of Section~\ref{sec:assumptions} yields \eqref{bound0}.

To obtain \eqref{bound}, it suffices to use the definition of the subdifferential, the continuity of $p$ and the fact that $\mbox{int}\,\mathcal{D'}$ is dense in $\mathcal{D}$.
\end{proof}




We have the following property for the sequence generated by the method
\begin{proposition}[Hidden gradient steps]	\label{prop:sequence}
Let $\{x_k\}_{k =1, 2,\ldots}$ be the sequence defined through $x_{k+1}=p(x_k)$ with $x_0\in \DD$. Then $x_k$ lies in $\DD$ and 
\begin{align}
F(x_k) + \frac{\mu}{2} ||x_k - x_{k+1}||^2 &\leq f(x_k) \leq F(x_{k-1}), \: \forall k\geq 1.\label{eq:decrease}
\end{align}
Moreover, for all compact subset $C$ of $\R^n$, there exists $K_2(C)>$ such that $$\dist(0, \partial F(x_k)) \leq K_2(C)||x_{k+1} - x_k||, \text{ whenever }x_k \in C.$$
\end{proposition}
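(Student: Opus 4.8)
The plan is to assemble the statement directly from the descent and subgradient estimates already proved, after first confirming feasibility of the iterates by induction; no new idea beyond bookkeeping is needed.

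First I would check that $x_k\in\DD$ for every $k$. Starting from $x_0\in\DD$, the inner-constraints assumption \eqref{cont} gives $D(x)\subset\DD$ whenever $x\in\DD$; since $x_{k+1}=p(x_k)\in D(x_k)$ by definition of the iteration mapping \eqref{itmapping}, an immediate induction shows $x_k\in\DD$ for all $k\geq0$, so in particular $F(x_k)=h(x_k,p(x_k))$ is finite at every iterate. This is what makes all subsequent estimates applicable.

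Next I would establish the two-sided inequality \eqref{eq:decrease}. For the left-hand inequality I apply the descent property \eqref{eq_link} at the point $x=x_k\in\DD$: since $x_{k+1}=p(x_k)$ it reads exactly $f(x_k)=h(x_k,x_k)\geq h(x_k,p(x_k))+\frac{\mu}{2}||x_k-x_{k+1}||^2=F(x_k)+\frac{\mu}{2}||x_k-x_{k+1}||^2$. For the right-hand inequality I use that $h(x_{k-1},\cdot)$ is an upper model for $f$ on $D(x_{k-1})$, i.e. assumption \eqref{majmin}(iii): evaluating it at $y=x_k=p(x_{k-1})\in D(x_{k-1})$ gives $F(x_{k-1})=h(x_{k-1},x_k)\geq f(x_k)$, which is the claimed bound $f(x_k)\leq F(x_{k-1})$.

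For the subgradient bound I would invoke Lemma~\ref{lem:stepCond}. The only point requiring attention is that that lemma is stated for bounded sets contained in $\DD$, whereas here $C$ is an arbitrary compact subset of $\R^n$; so I replace $C$ by $C\cap\DD$ (a bounded subset of $\DD$, hence compact since $\DD$ is closed) and let $K_2(C)$ be the constant furnished by Lemma~\ref{lem:stepCond} for $C\cap\DD$. Because $x_k\in\DD$ always, whenever $x_k\in C$ we have $x_k\in\DD\cap C$, and estimate \eqref{bound} yields $\dist(0,\partial F(x_k))\leq K_2(C)||p(x_k)-x_k||=K_2(C)||x_{k+1}-x_k||$. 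I do not expect a genuine obstacle: the proposition is essentially a repackaging of \eqref{eq_link}, \eqref{majmin}(iii) and Lemma~\ref{lem:stepCond}, and the only delicate steps are the feasibility induction and reconciling the domain conventions for $C$.
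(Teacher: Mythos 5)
Your proof is correct and follows essentially the same route as the paper's: feasibility by the inclusion $p(x_k)\in D(x_k)\subset\DD$ from \eqref{cont}, the two inequalities of \eqref{eq:decrease} from the descent estimate \eqref{eq_link} together with the upper-model property \eqref{majmin}(iii), and the subgradient bound directly from Lemma~\ref{lem:stepCond}. The only cosmetic differences are that the paper packages the two inequalities into a single chain $F(x)\geq f(p(x))\geq F(p(x))+\frac{\mu}{2}\|p(x)-p(p(x))\|^2$, and that your explicit replacement of $C$ by $C\cap\DD$ is a small point of care the paper leaves implicit.
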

\begin{proof}
The sequence $x_k$ lies in $\DD$ since $p(x_k)\in D(x_k)\subset\DD$ by \eqref{cont}. We only need to prove the second item \eqref{eq:decrease} since the third one immediately follows from \eqref{bound}. Using inequality (\ref{eq_link}) and the fact that $h(x,y) \geq f(y)$ for all $y$ in $D(x)$, we have
\begin{eqnarray*}
F(x) &  =  & h(x,p(x))\\
&  \geq & f(p(x))\\
& = & h(p(x),p(x))\\
&  \geq & F(p(x)) + \frac{\mu}{2} ||p(x) - p(p(x))||^2,
\end{eqnarray*}
therefore
$$F(x_{k-1}) \geq f(x_k) \geq F(x_k) + \frac{\mu}{2} ||x_k - x_{k+1}||^2$$
which proves (\ref{eq:decrease}).
\end{proof}

\subsection{An abstract convergence result}
\label{sec:abstrConv}
The following abstract result is similar in spirit to Attouch et al. (2013) \cite{attouch2013convergence} and to recent variations Bolte et al. (2013) \cite{bolte2013proximal}. However,  contrary to previous works it deals with conditions on a triplet $\{x_{k-1},x_k,x_{k+1}\}$ and the subgradient estimate is of explicit type (like in Absil et al. (2005) \cite{absil2005convergence} and even more closely Noll (2014) \cite{noll}). 

\begin{proposition}[Gradient sequences converge]
	\label{prop:abstConv}
Let $\bar{G}\colon \RR^n \to (-\infty,+\infty]$ be proper, lower semi-continuous,
 semi-algebraic function. Suppose that there exists a sequence $\{x_k\}_{k \in \NN}$ such that, 
\begin{itemize}
	\item[\rm (a)] $\exists K_1>0$ such that $\bar{G}(x_{k}) + K_1||x_{k+1} - x_k||^2 \leq \bar{G}(x_{k-1})$
	\item[\rm (b)] For all compact subset $C$ of $\R^n$, there exists $K_2(C)>$ such that $$\dist(0, \partial \bar{G}(x_k)) \leq K_2(C)||x_{k+1} - x_k||, \text{ whenever }x_k \in C.$$
\item[\rm (c)] If  there exists $x_{k_j}\to \bar x$ as $j\to+\infty$, 
	then $\bar{G}(x_{k_j})\to \bar{G}(\bar x)$.
 \end{itemize}
 
 Then, 
\begin{enumerate}
\item[(I)] The following asymptotic alternative holds:
\begin{itemize}
\item[(i)] Either the sequence $\{x_k\}_{k \in \NN}$ satisfies $\|x_k\|\rightarrow +\infty$,
\item[(ii)] or it converges to a critical point of~$\bar{G}$.
\end{itemize}
As a consequence each bounded sequence is a converging sequence.

\item[(II)] When $x_k$ converges,  we denote by  $x_\infty$ its limit and we take $\theta \in [0,1)$ a \L ojasiewicz exponent of $\bar{G}$ at $x_\infty$. Then,

(i) If $\theta=0$, the sequence  $(x_k)_{k\in \mathbb{N}}$ converges in a finite number of steps,

(ii) If $\theta \in (0,\frac{1}{2}]$ then there exist $c>0$ and
 $q\in [0,1)$ such that
$$\|x_k-x_{\infty}\|\leq c\:q^k, \forall k\geq1.$$

(iii) If $\theta \in (\frac{1}{2},1)$ then there exists $c>0$ such that

$$\|x_k-x_{\infty}\| \leq c\:k^{-\frac{1-\theta}{2\theta-1}}, \forall k\geq1.$$

\end{enumerate}
\end{proposition}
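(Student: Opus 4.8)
The plan is to follow the now-standard Kurdyka--\L ojasiewicz (KL) convergence scheme of Attouch et al. (2013) \cite{attouch2013convergence}, but adapted to the \emph{explicit} (forward-looking) error estimate~(b), which couples the triplet $\{x_{k-1},x_k,x_{k+1}\}$. Throughout set $d_k=\|x_{k+1}-x_k\|$ and $r_k=\bar G(x_k)-\ell$, where $\ell=\lim_k \bar G(x_k)$; note that (a) forces $\{\bar G(x_k)\}$ to be non-increasing, so $\ell\in[-\infty,+\infty)$ is well defined.

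First I would settle the asymptotic alternative~(I). If $\|x_k\|\not\to+\infty$, then some subsequence $x_{k_j}$ converges to a cluster point $\bar x$; by (c) we get $\bar G(x_{k_j})\to\bar G(\bar x)$, which forces $\ell=\bar G(\bar x)$ to be finite. Summing (a) then yields $K_1\sum_k d_k^2\le \bar G(x_0)-\ell<\infty$, so in particular $d_k\to0$. Picking $v_k\in\partial\bar G(x_k)$ realizing $\dist(0,\partial\bar G(x_k))\le K_2 d_k\to0$ and using the closedness of the limiting subdifferential together with (c), I obtain $0\in\partial\bar G(\bar x)$, i.e. $\bar x$ is critical.

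The heart of the matter is upgrading ``$\bar x$ is a cluster point'' to ``$x_k\to\bar x$''. Applying the KL inequality of $\bar G$ at $\bar x$ (available since semi-algebraic functions are KL by Theorem~\ref{th:KL}) on the neighborhood $V$ with desingularizing $\varphi$, concavity of $\varphi$ gives, for iterates in $V\cap[\ell<\bar G<\ell+\alpha]$,
$$
\varphi(r_k)-\varphi(r_{k+1})\ \ge\ \varphi'(r_k)\,(r_k-r_{k+1})\ \ge\ \frac{1}{K_2 d_k}\,K_1 d_{k+1}^2,
$$
where I used (b) ($\varphi'(r_k)\ge 1/(K_2 d_k)$) and the shifted inequality (a) ($r_k-r_{k+1}\ge K_1 d_{k+1}^2$). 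Writing $\Delta_k=\varphi(r_k)-\varphi(r_{k+1})$ and applying $\sqrt{ab}\le \tfrac14 a+b$ to $d_{k+1}^2\le (K_2/K_1)\,d_k\Delta_k$ produces the summable majorization
$$
d_{k+1}\ \le\ \tfrac14 d_k+\tfrac{K_2}{K_1}\,\Delta_k .
$$
This is where the explicit nature of (b) is felt: the error lives at index $k$ but controls the \emph{next} step, which is exactly what the triplet inequality above absorbs. The classical trapping induction --- choosing $k_j$ large so that $x_{k_j}$ is close to $\bar x$ and $\varphi(r_{k_j})$ is small, then telescoping the $\Delta_k$ --- keeps all subsequent iterates in $V$ and yields $\sum_k d_k<\infty$. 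Hence $\{x_k\}$ is Cauchy and converges to $\bar x$. (The degenerate case $r_k=0$ for some $k$ propagates through (a) to $x_{k+1}=x_{k+2}=\cdots$, i.e. finite termination.) The corollary that bounded sequences converge is then immediate from the dichotomy.

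For the rates~(II) I would take $\varphi(s)=cs^{1-\theta}$ and track the tail lengths $R_k=\sum_{j\ge k} d_j\ge\|x_k-x_\infty\|$. Summing the majorization above and telescoping gives $R_{k+1}\le \tfrac14 R_k+\tfrac{K_2}{K_1}\varphi(r_k)$, while (b) combined with KL yields $r_k\lesssim d_k^{1/\theta}$, hence $\varphi(r_k)\lesssim d_k^{(1-\theta)/\theta}=(R_k-R_{k+1})^{(1-\theta)/\theta}$. For $\theta=0$ the KL inequality forces $\dist(0,\partial\bar G(x_k))\ge 1/c$, incompatible with $\dist(0,\partial\bar G(x_k))\le K_2 d_k\to0$ unless $r_k=0$, giving finite termination. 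For $\theta\in(0,\tfrac12]$ one has $(1-\theta)/\theta\ge1$, so $\varphi(r_k)\lesssim d_k=R_k-R_{k+1}$ and the recursion closes into $R_{k+1}\le q R_k$ with $q\in(0,1)$, i.e. the geometric rate~(a). For $\theta\in(\tfrac12,1)$ the exponent $(1-\theta)/\theta$ lies in $(0,1)$ and the resulting nonlinear recursion $R_{k+1}^{\theta/(1-\theta)}\lesssim R_k-R_{k+1}$ yields, by the standard discrete \L ojasiewicz lemma of Attouch and Bolte (2009) \cite{attouch2009convergence}, $R_k=O(k^{-(1-\theta)/(2\theta-1)})$, which is rate~(b). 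The main obstacle throughout is the trapping step, whose bookkeeping must be redone with the forward error term; once the inequality $d_{k+1}\le\tfrac14 d_k+\tfrac{K_2}{K_1}\Delta_k$ is in hand, the remaining estimates are routine.
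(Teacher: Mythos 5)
Your proposal is correct and follows essentially the same route as the paper: a KL-based length estimate exploiting the triplet structure of (a) and the explicit bound (b), followed by the standard trapping induction and the Attouch--Bolte rate analysis. The only difference is cosmetic: where the paper converts $d_{k+1}^2\le \frac{K_2}{K_1}d_k\,\Delta_k$ into a telescoping bound via the identity $2(a-b)\ge \frac{a^2-b^2}{a}$, you use the Young-type inequality $\sqrt{ab}\le \frac{a}{4}+b$ to get $d_{k+1}\le \frac14 d_k+\frac{K_2}{K_1}\Delta_k$; both yield the same summability and the same conclusions.
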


\begin{proof} We first deal with $(I)$. Suppose that there exists $k_0\geq 0$ such that $x_{k_0+1} = x_{k_0}$. This implies by~(a), that $x_{k_0+l} = x_{k_0}$ for all $l > 0$. Thus the sequence converges and the second inequality (b) implies that we have a critical point of $\bar{G}$. We now suppose that $||x_{k+1} - x_k|| > 0$ for all~$k\geq 0$.\\
\noindent{\em Definition of a KL neighborhood.} Suppose that $(I)(i)$ does not hold. There exists therefore a cluster point $\bar x$ of $x_k$. Combining (a) and (c), we obtain that
	\begin{equation}\label{const}
		\lim_{k\to+\infty}\bar{G}(x_k)=\bar{G}(\bar x).
	\end{equation}
	With no loss of generality, we assume that $\bar{G}(\bar x)=0$. Since $\bar{G}$ is semi-algebraic, it is a KL function (Theorem~\ref{th:KL}).  There exist $\delta > 0$, $\alpha > 0$ and $\varphi \in \varphi_\alpha$ such that 
	$$\varphi'(\bar{G}(x))\, \dist(0, \partial \bar{G}(x)) \geq 1,$$
	for all $x$ such that $\|x-\bar x\|\leq \delta$ and $x \in [0 < \bar{G} < \alpha]$. In view of assumption (b), set $$K_2=K_2\left(\bar B(\bar x,\delta)\right).$$
	\noindent
	{\em Estimates within the neighborhood.} Let $r\geq s>1$ be some integers and assume that the points $x_{s-1},x_{s}\ldots, x_{r-1}$ belong to $B(\bar x,\delta)$ with $\bar{G}(x_{s-1}) < \alpha$. Take $k \in \{s,\ldots,r \}$, using (a), we have
\begin{align*}
	\bar{G}(x_{k})  &\leq \bar{G}(x_{k-1}) - K_1||x_{k+1} - x_k||^2 \\
	&= \bar{G}(x_{k-1}) - K_1 \frac{||x_{k+1} - x_k||^2}{||x_{k} - x_{k-1}||} ||x_{k} - x_{k-1}|| \\
	&\leq  \bar{G}(x_{k-1}) - \frac{K_1}{K_2} \frac{||x_{k+1} - x_k||^2}{||x_{k} - x_{k-1}||} \dist(0, \partial \bar{G}(x_{k-1})).
\end{align*}
From the monotonicity and concavity of $\varphi$, we derive 
$$\varphi \circ \bar{G}(x_{k}) \leq \varphi \circ \bar{G}(x_{k-1}) - \varphi' \circ \bar{G}(x_{k-1}) \frac{K_1}{K_2} \frac{||x_{k+1} - x_k||^2}{||x_{k} - x_{k-1}||}  \dist(0,\partial \bar{G}(x_{k-1})),$$
thus by using KL property, for  $k \in \{s,\ldots,r \}$,
\begin{equation}
	\varphi \circ \bar{G}(x_{k}) \leq \varphi \circ \bar{G}(x_{k-1}) - \frac{K_1}{K_2} \frac{||x_{k+1} - x_k||^2}{||x_{k} - x_{k-1}||}. \label{eq:ineqSum}
\end{equation}
We now use the following simple fact: for $a > 0$ and $b \in \RR$,
$$2(a - b) - \frac{a^2- b^2}{a} = \frac{a^2 - 2ab + b^2}{a} = \frac{(a - b)^2}{a} \geq 0,$$
thus for $a>0$  and $b \in \RR$
\begin{equation}
2(a - b) \geq \frac{a^2- b^2}{a}. \label{eq:ineqab}
\end{equation}
We have therefore, for $k$ in $\{s,\ldots,r \}$,
\begin{align*}
||x_{k} - x_{k-1}|| &= \frac{||x_{k} - x_{k-1}||^2}{||x_{k} - x_{k-1}||}\\
&= \frac{||x_{k+1} - x_k||^2}{||x_{k} - x_{k-1}||} + \frac{ ||x_{k} - x_{k-1}||^2 - ||x_{k+1} - x_k||^2}{||x_{k} - x_{k-1}||}\\
&\overset{(\ref{eq:ineqab})}{\leq} \frac{||x_{k+1} - x_k||^2}{||x_{k} - x_{k-1}||} + 2(||x_{k} - x_{k-1}|| - ||x_{k+1} - x_k||)\\
&\overset{(\ref{eq:ineqSum})}{\leq}  \frac{K_2}{K_1}\left(\varphi \circ \bar{G}(x_{k-1}) - \varphi \circ \bar{G}(x_k)\right) +  2(||x_{k} - x_{k-1}|| - ||x_{k+1} - x_k||).
\end{align*}
Hence by summation
\begin{equation}\label{cauchy}
	\sum_{k=s}^{r} ||x_{k} - x_{k-1}||\leq \frac{K_2}{K_1}\Big(\varphi \circ \bar{G}(x_{s-1}) - \varphi \circ \bar{G}(x_r)\Big) + 2 \left(||x_{s} - x_{s-1}|| - ||x_{r+1} - x_r||\right).
\end{equation}
{\em The sequence remains in the neighborhood and converges.} Assume that for $N$ sufficiently large one has 
\begin{align}
\|x_N-\bar x\|&\leq \frac{\delta}{4},\label{petit0}\\
\frac{K_2}{K_1}\Big(\varphi \circ \bar{G}\Big)(x_{N}) &\leq \frac{\delta}{4},\label{petit}\\
\sqrt{K_1^{-1}\bar{G}(x_{N-1})}& <  \min\left(\frac{\delta}{4},  \sqrt{K_1^{-1}\alpha}\right)\label{petit1}.
\end{align}
One can require \eqref{petit} and \eqref{petit1} because $\varphi$ is continuous and $\bar{G}(x_k)\downarrow 0$. By (a), one has 
\begin{equation}\label{up}
	\|x_{N+1}-x_N \| \leq \sqrt{K_1^{-1}\bar{G}(x_{N-1})} < \frac{\delta}{4}.
\end{equation}  
Let us prove that $x_r \in B(\bar{x}, \delta)$ for $r \geq N + 1$. We proceed by induction on $r$. By \eqref{petit0}, $x_N \in B(\bar{x}, \delta)$  thus the induction assumption is valid for $r=N+1$.  Since by \eqref{petit1} one has $\bar{G}(x_N) < \alpha$,  estimation \eqref{cauchy} can be applied with $s= N+1$. Suppose that $r \geq N+1$ and $x_{N}, \ldots, x_{r-1} \in B(\bar x,\delta)$, then we have the following 
\begin{eqnarray*}
\|x_{r}-\bar x\| & \leq & \|x_{r}-x_{N}\| + \|x_N-\bar x\|\\
& \overset{\eqref{petit0}}\leq & \sum_{k=N+1}^{r}\|x_{k}-x_{k-1}\|+\frac{\delta}{4}\\
& \overset{\eqref{cauchy}}\leq & \frac{K_2}{K_1}\varphi \circ \bar{G}(x_{N}) + 2 ||x_{N+1} - x_{N}||+ \frac{\delta}{4}\\
& \overset{\eqref{petit}, \eqref{up}}< & \delta.
\end{eqnarray*}
Hence $x_{N}, \ldots, x_{r} \in B(\bar x,\delta)$ and the induction proof is complete. Therefore, $x_r \in B(\bar{x}, \delta)$ for any $r \geq N$. Using \eqref{cauchy} again, we obtain that the series $\sum \|x_{k+1}-x_k\|$ converges, hence $x_k$ also converges by Cauchy criterion.\\

The second part {\em(II)} is proved as in Attouch and Bolte (2009) \cite[Theorem 2]{attouch2009convergence}. First, because of the semi-algebraicity of the data, $\varphi$ can be chosen of the form $\varphi(s)=c.s^{1-\theta}$ with $c>0$ and $\theta\in [0,1)$. In this case, \eqref{cauchy} combined with KL property and (b) yields a similar result as formula (11) in Attouch and Bolte (2009) \cite{attouch2009convergence}, which therefore leads to the same estimates.
 
\end{proof}
\begin{remark}\label{coercive}{\rm (1) {\bf (Coercivity implies convergence)} Quite often in practice $\bar{G}$ has bounded level sets. In that case the alternative reduces to convergence because of assumption (a).\\
	(2) {\bf (Assumption (c))} Assumption (c) is very often satisfied in practice: for instance when $\bar{G}$ has a closed domain and is continuous on its domain or when $\bar{G}$ is locally  convex up to a square (locally semi-convex).}
\end{remark}

At last, Propositions \ref{prop:sequence} and \ref{prop:abstConv} can be combined to prove Theorem \ref{th:convergence}. First, we can consider the restriction of $F$ to the closed semi-algebraic set $\mathcal{D}$, since the sequence of Proposition \ref{prop:sequence} stays in $\mathcal{D}$. $F$ is semi-algebraic by Lemma \ref{lem:FLocLip} and $F$ is continous on $\mathcal{D}$ by continuity of $h$ and $p$. Propositions \ref{prop:sequence} shows that $F$ satisfies assumptions (a) and (b) of Proposition \ref{prop:abstConv}, and assumption (c) follows by the previous remark. Hence Proposition \ref{prop:abstConv} applies to $F$ and the result follows. 

\section{Beyond semi-algebraicity: MMP and NLP with real analytic data}
\label{sec:representable}

Many concrete and essential problems involve objectives and constraints defined through real analytic functions --which are not in general semi-algebraic functions-- and this raises the question of the actual scope of the results described previously. We would thus like to address here the following question: {\em Can we deal with nonlinear programming problems involving real analytic data?} 

\smallskip

A convenient framework to capture most of what is  needed to handle real analytic problems, and of an even larger class of problems, is the use of o-minimal structures. These are classes of sets and functions whose stability properties and topological behavior are the same as those encountered  in the semi-algebraic world.

\bigskip

We give below some elements necessary  to understand what is at stake and how our results enter this larger framework.

\begin{definition}[O-minimal structures, {\rm see Van Den Dries and Miller (1996) {\cite{Dries-Miller96}}}]
\label{d:omin}
{\em
An {\em o-minimal} structure on $(\R,+,.)$ is a sequence of families ${\cal O}=(\mathcal{O}_{p})_{p\in \NN}$ with $\mathcal{O}_{p}\subset{\mathcal P}(\R^p)$ (the collection  of subsets of $\R^p$), such that for each $p\in\NN$:
\begin{enumerate}\itemsep=1mm
\item[(i)]
Each $\mathcal{O}_{p}$ contains $\R^p$ and is stable by finite union, finite intersection and complementation;
\item[(ii)]
if $A$ belongs to $\mathcal{O}_{p}$, then $A\times\R$ and
$\R\times A$ belong to $\mathcal{O}_{p+1}$ ;
\item[(iii)]
if $\Pi:\R^{p+1}\rightarrow\R^p$ is the canonical
projection onto $\R^p$ then for any $A$ in $\mathcal{O} _{p+1}$,
the set $\Pi(A)$ belongs to $\mathcal{O}_{p}$ ;
\item[(iv)]
$\mathcal{O}_{p}$ contains the family of real algebraic
subsets of $\R^p$, that is, every set of the form
\[
\{x\in\R^p:g(x)=0\},
\]
where $g:\R^p\rightarrow\R$ is a real polynomial function ;
\item[(v)]
the elements of $\mathcal{O}_{1}$ are exactly the finite
unions of intervals.
\end{enumerate}}
\end{definition}

Examples of such structures are given in Van Den Dries and Milller (1996) \cite{Dries-Miller96}. We focus here on the class of globally subanalytic sets which allows us to deal with real analytic NLP in a simple manner. Thanks to Gabrielov's theorem of the complement, the {\em class of globally subanalytic subsets} can be seen as the smallest o-minimal structure containing semi-algebraic sets and the graphs of all real analytic functions of the form: $f:[-1,1]^n\to \R$, see e.g. Van Den Dries and Milller (1996) \cite{Dries-Miller96}.  As a consequence any real analytic function defined on an open neighborhood of a box is globally subanalytic. 

\smallskip

Note that a real analytic function might not be globally subanalytic (take $\sin$ whose graph intersects the $x$-axis infinitely many times, and thus (iv) is not fulfilled for $\left(\mbox{graph}\;\sin\right)\,\cap\,(\text{O}x)$, however it follows from the definition that  the restriction of a real analytic function to a compact set  included in its (open) domain is globally subanalytic.

\bigskip

We come now to the results we need for our purpose. For any o-minimal structure, one can assert that: 
\begin{itemize}
\item[(a)] The KL property holds -- i.e. one can replace the term ``semi-algebraic" by ``definable" in Theorem~\ref{th:KL}, see Bolte et al. (2007) \cite{bolte2007clarke}.
\item[(b)] The stratification properties \eqref{strat} used to derive the abstract qualification condition hold, see Van Den Dries and Milller (1996) \cite{Dries-Miller96}.\end{itemize}

\medskip
\fbox{
\begin{minipage}{16cm}
{\em As a consequence, and at the exception of convergence rates, all the results announced in the paper are actually valid for an arbitrary o-minimal structure instead of the specific choice of the class of semi-algebraic  sets. }
\end{minipage}
}

\bigskip
To deal with the case of real analytic problems, we combine the use of compactness and of the properties of globally subanalytic sets. This leads to the following results. 

\begin{theorem}[Convergence of ESQM/S$\ell^1$QP for analytic functions]
	Assume that the following properties hold
	\begin{itemize}
	\item[--] The functions $f,f_1,\ldots, f_m$ are real analytic and $Q$ is globally subanalytic {\rm(\footnote{$Q$ subanalytic is actually enough, see Van Den Dries and Milller (1996) \cite{Dries-Miller96}})},
	\item[--] Lipschitz continuity assumptions \eqref{eq:ESQM0},
	\item[--] steplength condition  \eqref{aus-lip},		
	\item[--]  qualification assumptions \eqref{eq:ESQM2}, 
		\item[--] boundedness assumptions \eqref{eq:ESQM3}, \eqref{eq:ESQM4}.
		\end{itemize}
 Then, \begin{itemize}
 \item[(i)] the sequence $\{x_k\}_{k \in \NN}$ generated by {\rm (ESQM)} (resp. {\rm S$\ell^1$QP}) converges to a feasible point $x_\infty$ satisfying the KKT conditions for the nonlinear programming problem $\NLP$.
 
 \item[(ii)] Either convergence occurs in a finite number of steps or the rate is of the form:
 
(a)  $\|x_k-x_\infty\|=O(q^k)$, with $q\in(0,1)$, 
 
(b)  $\|x_k-x_\infty\|=O\left(\frac{1}{k^{\gamma}}\right)$, with $\gamma>0$. 
\end{itemize}	
		 
\end{theorem}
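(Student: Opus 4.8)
The plan is to reduce the analytic case to the already-established definable (o-minimal) case by a compactness/localization argument, exploiting the fact that a real analytic function becomes globally subanalytic once restricted to a compact set.

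\textbf{Step 1 (Stabilization and boundedness).} First I would invoke Theorem~\ref{th:auslender} (respectively Theorem~\ref{th:fletcher}), whose proof uses neither semi-algebraicity nor definability: the penalty parameter $\beta_k$ equals a fixed $\bar\beta$ for all $k\geq k_0$, and every cluster point of $\{x_k\}$ is a KKT point of $\NLP$. Next I would show that the sequence is bounded. After stabilization the iteration is exactly the MMP associated with the exact-penalty problem, and the descent estimate \eqref{eq:decrease} of Proposition~\ref{prop:sequence} (applied to the penalized objective) confines the iterates to one of its sublevel sets. On such a set one has $x\in Q$ and, using the lower bound \eqref{eq:ESQM4} on $f$, a uniform bound $\max_i f_i(x)\leq\mu$; hence by the compactness assumption \eqref{eq:ESQM3} the set $\{x\in Q:f_i(x)\leq\mu,\ \forall i\}$ is compact, so $\{x_k\}_{k\geq k_0}$ lies in a compact set $C$. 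Fix a closed box $B$ with $C\subset\mathrm{int}\,B$.

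\textbf{Step 2 (Localization to globally subanalytic data).} On $B$ the data become definable: the restrictions $f|_B,f_1|_B,\ldots,f_m|_B$ of real analytic functions to the compact box $B$ are globally subanalytic, and $Q\cap B$ is globally subanalytic as an intersection of globally subanalytic sets. I would then replace $\NLP$ by the problem with objective and constraints restricted to $B$ and simple set $Q\cap B$. The crucial point to verify is that this modification does not alter the generated trajectory: each model $h_{\bar\beta}(x_k,\cdot)$ uses only the values and gradients of $f,f_i$ at $x_k\in C$, and the strong convexity coming from $\tfrac{\lambda+\bar\beta\lambda'}{2}\|y-x_k\|^2$ together with the descent property keeps the unique minimizer $x_{k+1}=p(x_k)$ inside $\mathrm{int}\,B$; therefore minimizing over $Q$ or over $Q\cap B$ yields the same point, and the restricted data reproduce $\{x_k\}_{k\geq k_0}$ verbatim. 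All structural hypotheses of the MMP framework (inner convex constraints \eqref{cont}, strongly convex upper models \eqref{majmin}, qualification \eqref{qual}) survive the restriction, since they are established in the Appendix purely from the form of the models and from \eqref{eq:ESQM0}, \eqref{aus-lip}/\eqref{fletcher-lip} and \eqref{eq:ESQM2}, all of which involve only values and gradients on $C$.

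\textbf{Step 3 (Apply the definable convergence result and read off rates).} For the restricted, globally subanalytic problem the value function $F=\val$ is again globally subanalytic, since partial minimization preserves definability exactly as in Lemma~\ref{lem:FLocLip}, and hence $F$ is a KL function by the o-minimal extension of Theorem~\ref{th:KL}. By the boxed principle that all results hold over an arbitrary o-minimal structure, Proposition~\ref{prop:abstConv} applies to $F$: the bounded sequence $\{x_k\}$ converges to a single limit $x_\infty$, which, being a cluster point, is a KKT point of $\NLP$ by Step~1. Finally, for globally subanalytic functions the classical \L ojasiewicz inequality furnishes a desingularizing function of power type $\varphi(s)=c\,s^{1-\theta}$ with a genuine \L ojasiewicz exponent $\theta\in[0,1)$; this is precisely the feature that separates the globally subanalytic case from a general o-minimal structure and that licenses the rate estimates. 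Part~(II) of Proposition~\ref{prop:abstConv} then yields conclusion~(ii) exactly as in the semi-algebraic theorems. I expect the only delicate point to be Step~2, namely rigorously checking that restricting the data to $B$ leaves the trajectory unchanged and that each qualification and regularity hypothesis is inherited on the compact set; everything else is a transfer of already-proven statements from the semi-algebraic to the globally subanalytic structure.
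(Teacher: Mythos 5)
Your proposal is correct and follows essentially the same route as the paper: the authors likewise first observe that the iterates are bounded (via Theorem~\ref{th:auslender}/\ref{th:fletcher} and assumption \eqref{eq:ESQM3}), then artificially add box constraints $-a\leq x_i\leq a$ so that the real analytic data become globally subanalytic hence definable, note that this leaves the sequence unchanged, and invoke the o-minimal version of the convergence results together with the Puiseux lemma for the power-type desingularizing function that yields the rates. Your write-up simply fills in the details that the paper's very terse proof leaves implicit.
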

\begin{theorem}[Convergence of the moving balls method]\label{t:moving}
Recall that $Q=\RR^n$ and assume that the following properties hold
	\begin{itemize}
	\item[--] The functions $f,f_1,\ldots,f_m$ are real analytic,
	\item[--] Lipschitz continuity assumptions \eqref{eq:MB1}, 
	\item[--] Mangasarian-Fromovitz qualification condition \eqref{eq:MB3},
		\item[--] boundedness condition  \eqref{eq:MB2},
		\item[--] feasibility of the starting point $x_0 \in \mathcal{F}$.	
		\end{itemize}

	Then,
	
\begin{itemize}
\item[(i)] The sequence $\{x_k\}_{k \in \NN}$  defined by the moving balls method converges to a feasible point $x_\infty$ satisfying the KKT conditions for the nonlinear programming problem $\NLP$.

\item[(ii)] Either convergence occurs in a finite number of steps or the rate is of the form:
 
(a)  $\|x_k-x_\infty\|=O(q^k)$, with $q\in(0,1)$, 
 
(b)  $\|x_k-x_\infty\|=O\left(\frac{1}{k^{\gamma}}\right)$, with $\gamma>0$. 
\end{itemize}

\end{theorem}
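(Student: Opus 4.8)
The plan is to reduce the real analytic statement to the globally subanalytic case, which is itself an instance of the o-minimal framework already covered by the results of the paper. As stressed after Definition~\ref{d:omin}, the one genuine difficulty is that a real analytic function need not be globally subanalytic on all of $\R^n$ (the $\sin$ example); this is circumvented by localization, since the restriction of a real analytic function to a compact set contained in its domain is globally subanalytic. So the whole argument hinges on confining the trajectory to a fixed compact set.

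First I would show that the iterates stay in a fixed compact box. Since the moving balls method is an instance of (MMP) (as established in the Appendix), the descent inequality \eqref{eq_link} of Proposition~\ref{prop:sequence} gives $f(x_{k+1})\le f(x_k)\le f(x_0)$, while feasibility of the scheme keeps $x_k\in\FF$. Hence $x_k\in\FF\cap\{f\le f(x_0)\}$ for all $k$, and this set is bounded by the compactness condition \eqref{eq:MB2}. Let $K$ be a compact box containing it in its interior. Next I would replace the data by globally subanalytic surrogates on $K$: there $f,f_1,\ldots,f_m$ and their analytic gradients are globally subanalytic, so the local quadratic model $h$ and the moving-balls constraint map $D$ are globally subanalytic on a neighborhood of $K$; by stability of this class under the operations involved (Tarski-Seidenberg-type properties and partial minimization), the value function $F=\val$ is globally subanalytic near $K$ as well. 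Because the globally subanalytic sets form an o-minimal structure, both the KL property (Theorem~\ref{th:KL} in its o-minimal form) and the stratification property \eqref{strat} apply. The estimates of Proposition~\ref{prop:sequence} are purely local and survive this restriction, so assumptions (a)--(c) of Proposition~\ref{prop:abstConv} hold for $F$.

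Invoking Proposition~\ref{prop:abstConv} then yields part (i): the bounded sequence converges to a single limit $x_\infty$; by continuity of the iteration map $p$ (Lemma~\ref{lem:continuity}) this $x_\infty$ is a fixed point of $p$, hence a critical point of $\NLP$ by the fixed-point lemma, and finally a KKT point through Proposition~\ref{crit}(iii), with the MFQC \eqref{eq:MB3} supplying the required Robinson qualification (recall $Q=\R^n$, so $\DD=\FF$). For part (ii) I would exploit the feature of the globally subanalytic structure that distinguishes it from an arbitrary o-minimal one: the box preceding this section withholds rates in general precisely because the desingularizing function $\varphi$ need not be of power type, whereas for globally subanalytic $F$ the Łojasiewicz inequality holds with $\varphi(s)=c\,s^{1-\theta}$ for some exponent $\theta\in[0,1)$. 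Feeding this power-type $\varphi$ into part (II) of Proposition~\ref{prop:abstConv} reproduces exactly the dichotomy $O(q^k)$ versus $O(k^{-\gamma})$.

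The main obstacle is conceptual rather than computational: one must justify that localization to $K$ is legitimate. This rests on two observations, which I would make explicit. The KL inequality is needed only at cluster points of the sequence, all of which lie in $K$, and the subgradient estimate of Proposition~\ref{prop:sequence} is already stated compact set by compact set; consequently only the values of $f,f_1,\ldots,f_m$ on a compact neighborhood of $x_\infty$ ever enter the convergence argument, and replacing the analytic data by their globally subanalytic restrictions changes nothing there. The only subtlety to verify carefully is that the minimizer $p(x_k)=x_{k+1}$ itself remains in $K$ (it does, being feasible with $f$-value below $f(x_0)$), so that $F$ evaluated along the trajectory depends only on the restricted, globally subanalytic data; everything else is a transcription of the semi-algebraic proof with ``semi-algebraic'' replaced by ``globally subanalytic.''
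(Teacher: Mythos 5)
Your proposal is correct and follows essentially the same route as the paper: establish boundedness of the iterates via the descent property and the compactness condition \eqref{eq:MB2}, artificially restrict the problem to a compact box so that the real analytic data become globally subanalytic (hence definable in an o-minimal structure where the KL and stratification results apply), and recover the power-type rates from the fact that the \L ojasiewicz inequality for globally subanalytic functions admits a desingularizing function of the form $\varphi(s)=c\,s^{1-\theta}$ (Puiseux). The paper's own proof is far terser but rests on exactly these three steps.
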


\begin{proof} The ``proofs" of both theorems are the same. We observe first that in both cases the sequences are bounded. Let thus $a>0$ be such that $x_k\in [-a,a]^n$ for all nonnegative $k$.
Now the initial problem can be artificially transformed to a definable problem by including the constraints $x_i\leq a$ and $-x_i\leq a$ without inducing any change for the sequences. This imposes restrictions to real analytic function making them globally subanalytic hence definable.

The fact that the rate of convergence are of the same nature is well known and comes from the fact that Puiseux Lemma holds for subanalytic functions (see Van Den Dries and Milller (1996) \cite[5.2]{Dries-Miller96} and the discussion in Kurdyka (1998) \cite[Theorem \L{}I]{kurdyka1998gradients}).
\end{proof}

\section{Appendix: convergence proofs for SQP methods}
\label{sec:appendix}

\subsection{Convergence of the moving balls method}

The local model of $f$ is given at a feasible $x$ by
$$h_{\rm MB}(x, y) = f(x) + \left\langle \nabla f(x), y - x \right\rangle + \frac{L}{2} ||y - x||^2, \: x,y\in \R^n,$$
while the constraint approximation is given by
$$D(x)=\left\{y\in\RR^n: f_i(x) + \left\langle \nabla f_i(x), y - x \right\rangle + \frac{L_i}{2} ||y - x||^2  \leq 0 \right\}.$$

The fact that for all $x$ in  $\mathcal{F}$, $D(x) \subset \mathcal{F}$  is ensured by Lemma~\ref{lem:descent}.  As an intersection of a finite number of balls containing $x$ the set $D(x)$ is a nonempty compact (hence closed) convex set. The proof of the continuity of $D$ is as in Auslender et al. (2010) \cite[Proposition A1 \& A2]{auslender2010moving}.

Let us also recall that Mangasarian-Fromovitz condition implies that
\begin{lemma}[Slater condition for  $\mathcal{P}(x)$]{Auslender et al. (2010) \cite[Proposition 2.1]{auslender2010moving}}
The set $D(x)$ satisfies the Slater condition for each $x$ in $\FF$.
\end{lemma}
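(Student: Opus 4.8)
The plan is to exhibit, for each fixed $x\in\FF$, a single point $\bar y$ strictly satisfying all the quadratic constraints defining $D(x)$, i.e.
$$f_i(x)+\langle\nabla f_i(x),\bar y-x\rangle+\frac{L_i}{2}\|\bar y-x\|^2<0,\quad i=1,\ldots,m,$$
which is precisely Slater's condition for the convex program $\PPx$. The natural candidate is a short step $\bar y=x+td$ along the Mangasarian-Fromovitz direction, with $t>0$ to be chosen small.

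First I would invoke MFQC \eqref{eq:MB3}: since $x\in\FF$, there exists $d\in\R^n$ with $\langle\nabla f_i(x),d\rangle<0$ for every active index $i\in I(x)=\{i:f_i(x)=0\}$. Setting $\bar y=x+td$, the value of the $i$-th constraint becomes the scalar function
$$g_i(t)=f_i(x)+t\langle\nabla f_i(x),d\rangle+\frac{L_i t^2}{2}\|d\|^2.$$

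Then I would split the analysis according to activity. For an active index $i\in I(x)$ one has $f_i(x)=0$, so $g_i(t)=t\big(\langle\nabla f_i(x),d\rangle+\tfrac{L_i t}{2}\|d\|^2\big)$; because $\langle\nabla f_i(x),d\rangle<0$, the bracket is negative for all $t$ below some threshold, whence $g_i(t)<0$. For an inactive index $i\notin I(x)$ one has $f_i(x)<0$, and since $g_i$ is continuous with $g_i(0)=f_i(x)<0$, again $g_i(t)<0$ for all sufficiently small $t$.

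Finally, because there are only finitely many constraints, I take $t>0$ smaller than each of the finitely many thresholds above; then $g_i(t)<0$ holds simultaneously for every $i=1,\ldots,m$, so $\bar y=x+td$ is the required Slater point. I expect no genuine obstacle: the only point requiring care is the uniform choice of a single step length $t$ valid for all indices at once, which is immediate from the finiteness of the index set.
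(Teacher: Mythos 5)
Your proof is correct and complete: the small step $\bar y=x+td$ along the Mangasarian--Fromovitz direction, with the active/inactive case split and a single $t$ chosen below the finitely many thresholds, is exactly the standard argument establishing Slater's condition for the quadratically constrained subproblem. The paper itself gives no proof here --- it simply cites Auslender et al.\ (2010), Proposition 2.1, whose argument is essentially the one you wrote --- so your proposal faithfully supplies the omitted reasoning with no gaps.
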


\begin{corollary}
	\label{cor:lambdaZero}
	For a given feasible $x$, set	
		$$g_i(y)=f(x)+\langle\nabla f_i( x), y- x\rangle +\frac{L_i}{2}\|y- x\|^2, \; y\in \R^n,\; i = 1, \ldots, m.$$
		Suppose that $(x,y)$ is such that $g_i(y) \leq 0$, $i = 1 \ldots m$. Then the only solution $u=(u_1,\ldots,u_m)$ to 
		\begin{align*}
			\sum_{i=1}^m u_i\, \nabla g_i(y)&= 0,\, u_i \geq 0 \text{ and } u_i \,g_i(y)=0 \text{ for $1 \leq i \leq m$}
		\end{align*}
		is the trivial solution $u = 0$.
\end{corollary}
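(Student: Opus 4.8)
The plan is to recognize the statement as the assertion that the active constraint gradients of the set $D(x)$ are \emph{positively linearly independent} at $y$, and to derive this directly from the Slater condition already established in the preceding lemma. First I would record that the $g_i$ are precisely the convex (indeed $L_i$-strongly convex) quadratics whose sublevel sets define $D(x)$, with gradients $\nabla g_i(y) = \nabla f_i(x) + L_i(y - x)$. The complementarity relations $u_i\,g_i(y) = 0$ immediately force $u_i = 0$ for every inactive index, so the system collapses to $\sum_{i \in I} u_i \nabla g_i(y) = 0$ with $u_i \geq 0$, where $I = \{i : g_i(y) = 0\}$ is the active set. It then suffices to prove that this relation forces $u_i = 0$ for all $i \in I$ as well.

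Second, I would exploit the Slater condition for $\mathcal{P}(x)$: there exists $\bar y$ with $g_i(\bar y) < 0$ for all $i = 1, \ldots, m$. Setting $d = \bar y - y$ and using the convexity inequality $g_i(\bar y) \geq g_i(y) + \langle \nabla g_i(y), d\rangle$ together with $g_i(y) = 0$ for $i \in I$, I obtain
$$\langle \nabla g_i(y), d\rangle \leq g_i(\bar y) < 0, \qquad \forall i \in I.$$
This is exactly a Mangasarian--Fromovitz type direction at $y$ for the active constraints defining $D(x)$.

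Finally, I would pair the dependence relation with $d$: from $\sum_{i \in I} u_i \nabla g_i(y) = 0$ one gets $\sum_{i \in I} u_i \langle \nabla g_i(y), d\rangle = 0$. Each summand is the product of the nonnegative $u_i$ and the strictly negative $\langle \nabla g_i(y), d\rangle$, hence is $\leq 0$; a vanishing sum of nonpositive terms forces every term to vanish, and since $\langle \nabla g_i(y), d\rangle \neq 0$ this yields $u_i = 0$ for all $i \in I$. Combined with $u_i = 0$ for inactive indices, this gives $u = 0$. The only delicate point is the passage from Slater feasibility to the strict directional inequality on active gradients, which rests essentially on the convexity of the quadratics $g_i$; everything else is a short sign-chasing argument, so I do not anticipate any real obstacle.
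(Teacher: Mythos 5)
Your proof is correct and follows essentially the same route as the paper's: both reduce to the active set via complementarity and then use the Slater condition for $\mathcal{P}(x)$ to produce a Mangasarian--Fromovitz direction that is incompatible with a nontrivial nonnegative vanishing combination of the active gradients. The only difference is that the paper invokes ``Slater classically implies MFCQ'' as a known fact, whereas you derive the strict directional inequality explicitly from the convexity of the quadratics $g_i$ --- a harmless (and arguably welcome) expansion of the same argument.
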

\begin{proof}
	When $J = \{i=1,\ldots,m: \; g_i(y) = 0\}$ is empty, the result is trivial. Suppose that $J$ is not empty and argue by contradiction. This means that $0$ is in the convex envelope of $\big\{\nabla g_j(y), j\in J\big\}$ and thus one cannot have Mangasarian-Fromovitz condition for $\mathcal{P}(x)$ at $y$ (recall that $\mathcal{P}(x)$ involves constraints of the form $g_i \leq 0$). This contradicts the fact that Slater condition holds for $\mathcal{P}(x)$, since Slater condition classically implies Mangasarian-Fromovitz condition at each point. 
\end{proof}

\begin{corollary}[Lagrange multipliers of the subproblems are bounded]\label{lagbound}
For each $x$ in $\FF$, we denote by $\Lambda(x)\subset\R_+^m$ the set of Lagrange multipliers associated to $\mathcal{P}(x)$. 
Then for any compact subset $B$ of $\FF$,
\begin{equation}
\sup \left\{\max_{i=1,\ldots,m} \lambda_i(x): (\lambda_1(x),\ldots,\lambda_m(x))\in \Lambda(x), x\in B\right\}<+\infty.
\end{equation}
\end{corollary}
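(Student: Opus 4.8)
The plan is to argue by contradiction, combining compactness of $B$ with the standard normalization trick for multipliers and the homogeneous nondegeneracy furnished by Corollary~\ref{cor:lambdaZero}. First I would note that, since the Slater condition holds for each $\mathcal{P}(x)$ with $x\in\FF$ and the subproblem is convex (strongly convex in the objective), KKT multipliers exist, so $\Lambda(x)\neq\emptyset$. Suppose the claimed supremum is $+\infty$. Then there are sequences $x_k\in B$ and $\lambda^k=(\lambda^k_1,\ldots,\lambda^k_m)\in\Lambda(x_k)$ with $t_k:=\max_i\lambda^k_i=\|\lambda^k\|_\infty\to+\infty$. Since $B$ is compact, after extracting a subsequence $x_k\to\bar x\in B\subset\FF$, and by continuity of the iteration mapping (Lemma~\ref{lem:continuity}) the solutions $y_k:=p(x_k)$ converge to $\bar y:=p(\bar x)$.

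Next I would write the KKT stationarity condition for $\mathcal{P}(x_k)$ at $y_k$. Setting $g^k_i(y)=f_i(x_k)+\langle\nabla f_i(x_k),y-x_k\rangle+\frac{L_i}{2}\|y-x_k\|^2$, and recalling that the objective gradient is $\nabla f(x_k)+L(y_k-x_k)$, stationarity reads
$$\nabla f(x_k)+L(y_k-x_k)+\sum_{i=1}^m\lambda^k_i\big(\nabla f_i(x_k)+L_i(y_k-x_k)\big)=0,$$
together with $\lambda^k_i\geq0$ and the complementarity $\lambda^k_i\,g^k_i(y_k)=0$. Dividing by $t_k$ and setting $\mu^k:=\lambda^k/t_k$ (so $\mu^k\geq0$, $\|\mu^k\|_\infty=1$), a further subsequence gives $\mu^k\to\bar\mu$ with $\bar\mu\geq0$ and $\|\bar\mu\|_\infty=1$, in particular $\bar\mu\neq0$. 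Because $x_k,y_k$ and the gradients $\nabla f,\nabla f_i$ remain bounded, the term $(\nabla f(x_k)+L(y_k-x_k))/t_k\to0$, so passing to the limit yields $\sum_{i=1}^m\bar\mu_i\big(\nabla f_i(\bar x)+L_i(\bar y-\bar x)\big)=0$, that is $\sum_i\bar\mu_i\nabla\bar g_i(\bar y)=0$, where $\bar g_i(y)=f_i(\bar x)+\langle\nabla f_i(\bar x),y-\bar x\rangle+\frac{L_i}{2}\|y-\bar x\|^2$. Passing to the limit in the complementarity $\mu^k_i\,g^k_i(y_k)=0$ (using continuity of $g^k_i$ in all arguments, so $g^k_i(y_k)\to\bar g_i(\bar y)$) gives $\bar\mu_i\,\bar g_i(\bar y)=0$, while feasibility $g^k_i(y_k)\leq0$ passes to $\bar g_i(\bar y)\leq0$.

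Finally I would invoke Corollary~\ref{cor:lambdaZero} at the feasible point $\bar x$ with $y=\bar y$: since $\bar g_i(\bar y)\leq0$ for all $i$, the only nonnegative $u$ with $\sum_i u_i\nabla\bar g_i(\bar y)=0$ and $u_i\,\bar g_i(\bar y)=0$ is $u=0$. But $\bar\mu$ is precisely such a vector and $\bar\mu\neq0$, a contradiction; hence the supremum is finite. The only delicate points, which I regard as the main obstacle, are making sure every ingredient of the KKT system passes correctly to the limit (existence and nontriviality of the limiting $\bar\mu$ out of the normalization, and limiting feasibility and complementarity for the \emph{linearized} constraints), and checking that the hypotheses of Corollary~\ref{cor:lambdaZero} hold verbatim at $(\bar x,\bar y)$; once the normalization is in place, the contradiction is immediate.
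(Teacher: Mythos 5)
Your proof is correct and follows essentially the same route as the paper's: argue by contradiction, normalize the multipliers (you divide by $\|\lambda^k\|_\infty$ where the paper divides by $\sum_i\lambda_i$, an immaterial difference), pass to the limit in the KKT system using compactness of $B$ and continuity of $p$, and contradict Corollary~\ref{cor:lambdaZero}.
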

\begin{proof} Observe that, at this stage, we know that $p$ is continuous. 
 We argue by contradiction and assume that the supremum is not finite. 	One can thus assume, by compactness, that there exists a point $\bar x$ in $\FF$ together with a sequence $z_j\to \bar x$ such that at least one of the $\lambda_j(z_j)$  tends to infinity. Writing down the optimality conditions, one derives Lagrange relations
 	$$ \frac{1}{\sum_{i=1}^m  \lambda_i(x)}\left(\nabla f(z_j) + L(p(z_j) - z_j)\right) +\sum_{i=1}^m  \frac{\lambda_i(z_j)}{\sum_{i=1}^m  \lambda_i(z_j)}\left( \nabla f_i(z_j) + L_i (p(z_j) - z_j)\right) = 0$$
	and complementary slackness
	$$\lambda_{j}(z_j)\left(f_i(z_j)+\langle\nabla f_i(z_j),p(z_j)-z_j)+\frac{L_i}{2}\|p(z_j)-z_j\|^2\right)=0.$$
		Up to an extraction one can assume that the sequence of $p$-uplet $\Big\{\Big (\frac{\lambda_i(z_j)}{\sum_{i=1}^m  \lambda_i(z_j)}\Big)_{i=1,\ldots,m}\Big\}_j$ converges to $u=(u_1,\ldots,u_m)$ in the unit simplex and that, for all $i$,  the limit of $\lambda_i(z_j)$ exists and is either finite or infinite. 
		Passing to the limit, one obtains that 
		\begin{align}
			\label{eq:zeroSumSlater}
		\sum_{i = 1}^m u_i\, \nabla g_i(p(\bar x))&= 0,\, g_i(p(\bar x))\leq 0 \text{ and } u_i \,g_i(p(\bar x))=0 \text{ for $1 \leq i \leq m,$ }
		\end{align}
		where $g_1,\ldots,g_m$ are as defined in Lemma \ref{cor:lambdaZero}. But Lemma \ref{cor:lambdaZero} asserts that the unique solution to such a set of equations is $u = 0$, which contradicts the fact that $u$ is a point of the unit simplex. 
\end{proof}

Recall that for all $x,y\in \R^n$, we set $\hat h_{\rm MB}(x,y)=h_{\rm MB}(x,y)+i_{D(x)}(y)$. Fix  $x\in \FF$ and $y$ in $D(x)$  set 
$$I(x, y) = \left\{i\in\{1,\ldots,m\} : \; f_i(x) + \left\langle \nabla f_i(x), y - x \right\rangle + \frac{L_i}{2} ||y - x||^2 = 0\right\}.$$
Combining Proposition~\ref{subdiff} with Corollary~\ref{cor:lambdaZero}, one has that the subdifferential of $\hat h_{\rm MB}$  is given by
\begin{align} \label{mov}
	 & \partial \hat h_{\rm MB}(x, y)\\	
	= &\left( 
	\begin{array}{c}
		L(x - y) - \nabla^2 f(x) (x - y)\\
		\nabla f(x) + L(y - x)
	\end{array}
	\right) + 
	\cone\left\lbrace\left(  
	\begin{array}{c}
		L _i(x - y) - \nabla^2 f_i(x) (x - y)\\
	\nabla f_i(x) + L_i (y - x)
	\end{array}
	\right),\; i \in I(x, y)\right\rbrace. \nonumber
\end{align}
The only assumption of Section~\ref{sec:assumptions} that needs to remain established is the qualification assumption~\eqref{qual}. 
\begin{lemma}
	\label{lem:qualif}
	The qualification assumption \eqref{qual}  holds for $h_{\mathrm{MB}}$.
\end{lemma}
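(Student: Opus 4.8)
The goal is to verify the qualification condition \eqref{qual} for the moving balls model $h_{\mathrm{MB}}$, namely that on any compact set there is a constant $K$ with $\|v\|\le K\|x-y\|$ whenever $(v,0)\in\partial\hat h_{\mathrm{MB}}(x,y)$ with $x\in\FF$ and $y\in D(x)$. The plan is to exploit the explicit description of $\partial\hat h_{\mathrm{MB}}$ in \eqref{mov}. A pair $(v,0)$ lies in the subdifferential precisely when the second block vanishes, i.e. when there exist multipliers $\lambda_i\ge0$, $i\in I(x,y)$, with
$$\nabla f(x)+L(y-x)+\sum_{i\in I(x,y)}\lambda_i\bigl(\nabla f_i(x)+L_i(y-x)\bigr)=0,$$
and in that case
$$v=L(x-y)-\nabla^2 f(x)(x-y)+\sum_{i\in I(x,y)}\lambda_i\bigl(L_i(x-y)-\nabla^2 f_i(x)(x-y)\bigr).$$
The first identity is exactly the KKT/Lagrange system for the subproblem $\PPx$ at the minimizer-type point $y$, so the $\lambda_i$ are the Lagrange multipliers of $\mathcal{P}(x)$.

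First I would fix a compact set $C$ and restrict to $x\in\FF\cap C$ and $y\in D(x)\cap C$; by continuity of the data and compactness, $\nabla f(x)$, $\nabla f_i(x)$, $\nabla^2 f(x)$, $\nabla^2 f_i(x)$ and the constants $L,L_i$ are all uniformly bounded. The key point is to bound the multipliers uniformly. Here is where I would invoke Corollary \ref{lagbound}: it asserts precisely that the Lagrange multipliers of the subproblems $\mathcal{P}(x)$ stay bounded over any compact subset $B$ of $\FF$. The subtlety is that \eqref{qual} concerns $(v,0)\in\partial\hat h_{\mathrm{MB}}(x,y)$ for arbitrary $y\in D(x)\cap C$, whereas Corollary \ref{lagbound} is phrased for the multipliers attached to the actual solution $p(x)$. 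I expect the resolution to be that the vanishing of the second block of \eqref{mov} forces $y$ to satisfy the full KKT system of $\mathcal{P}(x)$, and since $h_{\mathrm{MB}}(x,\cdot)$ is strongly convex with convex feasible set $D(x)$, that system has the unique solution $y=p(x)$ with its unique multiplier vector. Thus the only way to have $(v,0)\in\partial\hat h_{\mathrm{MB}}(x,y)$ is $y=p(x)$, and the relevant multipliers are exactly those controlled by Corollary \ref{lagbound}.

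Granting a uniform bound $M$ on $\sum_{i}\lambda_i$ over $C$, the estimate on $v$ is then routine: each term in the expression for $v$ is a bounded matrix (either $LI-\nabla^2 f(x)$ or $\lambda_i(L_iI-\nabla^2 f_i(x))$) applied to $(x-y)$, so
$$\|v\|\le\Bigl(\|LI-\nabla^2 f(x)\|+\sum_{i\in I(x,y)}\lambda_i\|L_iI-\nabla^2 f_i(x)\|\Bigr)\|x-y\|\le K(C)\|x-y\|,$$
where $K(C)$ collects the uniform bounds on the Hessians, on $L,L_i$, and on the multipliers via $M$. This yields \eqref{qual} with constant $K(C)$.

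The main obstacle, as flagged above, is the bridge between the abstract condition \eqref{qual} — stated for every admissible $(x,y)$ — and Corollary \ref{lagbound}, which bounds multipliers at the solution point. I would make this rigorous by arguing that $(v,0)\in\partial\hat h_{\mathrm{MB}}(x,y)$ with $y\in D(x)$ entails stationarity of $h_{\mathrm{MB}}(x,\cdot)+i_{D(x)}$ at $y$; strong convexity then gives $y=p(x)$, after which the multiplier bound applies verbatim. An alternative, should one wish to avoid identifying $y$ with $p(x)$, would be to re-run the contradiction argument of Corollary \ref{lagbound} directly: a sequence of admissible triples with exploding multipliers, after normalization on the simplex, would converge to a nontrivial solution of the homogeneous system \eqref{eq:zeroSumSlater}, contradicting Corollary \ref{cor:lambdaZero}, which rests on the Slater/Mangasarian-Fromovitz condition for $\mathcal{P}(x)$. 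Either route closes the proof; the first is shorter given the results already in place.
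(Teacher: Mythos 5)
Your argument is correct and follows essentially the same route as the paper's proof: identify $(v,0)\in\partial\hat h_{\mathrm{MB}}(x,y)$ with the KKT system of the strongly convex subproblem $\mathcal{P}(x)$, conclude $y=p(x)$, and then bound $v$ via the uniform multiplier bound of Corollary~\ref{lagbound} together with compactness bounds on the Hessians. Your explicit justification of the step $y=p(x)$ (which the paper states without elaboration) is a welcome addition, but the substance of the proof is the same.
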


\begin{proof}
	$(v, 0) \in \partial \hat{h}_{\mathrm{MB}}(x, y)$ implies that 
	\[
		y = \argmin_z \{\hat{h}_{\mathrm{MB}} (x, z):z\in D(x)\},
	\]
	in other words that $y= p(x)$. In view of \eqref{mov}, one has the existence of non-negative $\lambda_i(x), i=1,\ldots,m$ such that
\begin{eqnarray}
\Big(L .I_n - \nabla^2 f(x)+\sum_{i=1}^p \lambda_i(x)\left( L _i I_n - \nabla^2 f_i(x)\right)\Big)(x-p(x))& = v,\nonumber\\ 
	\nabla f(x) + L(y - x)+\sum_{i=1}^p  \lambda_i(x)\left( \nabla f_i(x) + L_i (p(x) - x)\right) & = 0. \label{mult}
\end{eqnarray}
	
	The desired bound on $v$ follows from the bound on the the Lagrange multipliers in \eqref{mult} obtained in Corollary~\ref{lagbound}
\end{proof}

\bigskip

The assumptions for applying  Theorem \ref{th:convergence} are now gathered and Theorem~\ref{t:moving} follows.  The fact that we eventually obtain a KKT point is a consequence of the qualification condition and Proposition~\ref{crit}.
	
\subsection{Convergence of Extended SQP and S$\ell^1$QP}
\subsubsection{Sketch of proof of Theorem \ref{th:auslender}}
The proof arguments are adapted from Auslender (2013) \cite[Theorem 3.1, Proposition 3.2]{auslender2013extended}.  Set $l=\inf_Q f$ and recall that $l>-\infty$ by \eqref{eq:ESQM4}. Use first regularity assumptions \eqref{eq:ESQM0}, \eqref{aus-lip} in combination with Lemma~\ref{lem:descent}, to derive that
\begin{align*}
	\frac{1}{\beta_{k+1}}(f(x_{k+1}) - l) + \max_{i = 0, \ldots, m} f_i(x_{k+1}) &\leq \frac{1}{\beta_{k}}(f(x_{k+1}) - l) + \max_{i = 0, \ldots, m} f_i(x_{k+1}) \\
	&\leq \frac{1}{\beta_{k}} (h_{\beta_k}(x_{k+1}, x_k) - l)\\
	&\leq \frac{1}{\beta_{k}} (h_{\beta_k}(x_k, x_k) - l - \frac{\lambda + \beta_k \lambda'}{2} ||x_{k+1} - x_k||^2)\\
	&\leq \frac{1}{\beta_{k}}(f(x_{k}) - l) + \max_{i = 0, \ldots, m} f_i(x_{k}) - \frac{\lambda'}{2} ||x_{k+1} - x_k||^2,
\end{align*}
where the first inequality follows from the monotonicity of $\beta_k$ and the fact that $f(x_{k+1}) - l \geq 0$, the second inequality is due to the descent lemma while the third one is a consequence of the strong convexity of the local model.

The above implies that  $$\frac{1}{\beta_{k+1}}(f(x_{k+1}) - l) + \max_{i = 0, \ldots, m} f_i(x_{k+1})\leq \frac{1}{\beta_{0}}(f(x_{0}) - l) + \max_{i = 0, \ldots, m} f_i(x_{0}),$$ 
thus  $\max_{i = 0, \ldots, m} f_i(x_{k+1})$ is bounded for all $k$ and the compactness assumption \eqref{eq:ESQM3} ensures the boundedness of $x_k$.

Since $\frac{1}{\beta_{k}}(f(x_{k}) - l) + \max_{i = 0, \ldots, m} f_i(x_{k}) \geq 0$, a standard telescopic sum argument gives that $||x_{k+1} - x_k|| \to 0$. Set
$$J_k = \{i=0, \ldots, m: \; \test_i(x_k, x_{k+1}) = \max_{j = 0 \ldots m} \test_j(x_k, x_{k+1}) \},$$
and suppose that $\beta_k \to \infty$. This means that, up to a subsequence, there exists a nonempty set $I \subset \{1, \ldots, m\}$ such that
\begin{align}
	\label{eq:contradictionESQM1}
	&J_{k} = I\\ \nonumber
	&\forall t \in \NN, \forall i \in I, f_i(x_{k}) + \left\langle\nabla f_i(x_k),x_{k+1} - x_k \right\rangle > 0.
\end{align}
Recall that the optimality condition for the local model minimization ensures that, for all $k$, there exists dual variables $u_i \geq 0, i \in J_k$ such that $\sum_{i \in J_k} u_i = 1$ and
\begin{align}
	\label{eq:contradictionESQM2}
	\left\langle \frac{1}{\beta_k}\left(\nabla f(x_k) + (\lambda + \lambda' \beta_k) (x_{k+1} - x_k)\right) + \sum_{i \in J_k} u_i \nabla f_i(x_k), z - x_{k+1} \right\rangle \geq 0,
\end{align}
for any $z \in Q$. Using the boundedness properties of $x_k$ and $u_i$, up to another subsequence, we can pass to the limit in equations (\ref{eq:contradictionESQM1}), (\ref{eq:contradictionESQM2}) to find $\bar{x} \in Q$, $\bar{u}_i, i \in I$ such that
\begin{align*}
	\bar{u}_i &\geq 0\\
	\sum_{i \in I} \bar{u}_i&= 1\\
	f_i(\bar{x}) &\geq 0, i \in I	\\
	\left\langle\sum_{i \in I} \bar{u}_i \nabla f_i(\bar{x}), z - \bar{x} \right\rangle &\geq 0, z \in Q,
\end{align*}
which contradicts qualification assumption \eqref{eq:ESQM2} ($\lim \|x_{k+1} - x_k\|=0$). Therefore, for $k$ sufficiently large, we have
\begin{align*}
	&\beta_k = \beta > 0, \\ 
	&f_i(x_k) + \left\langle \nabla f_i(x_k), x_{k+1} - x_k\right\rangle \leq 0,\\ 
	&0 \in J_k. 
\end{align*}
Given that $x_{k+1} - x_k \to 0$, any accumulation point is feasible. Furthermore, given an accumulation point $\bar{x}$, set $\bar{I} = \{0 \leq i\leq m,\, f_i(\bar{x}) = 0\}$. It must holds that (up to a subsequence) $J_k = \bar{I}$ for a sufficiently large $k$. The fact that $\bar{x}$ is a stationary point follows by passing to the limit in (\ref{eq:contradictionESQM2}).

\subsubsection{Proof of convergence of ESQM}
As granted by Theorem~\ref{th:auslender}, there exists $k_0$ such that $\beta_k=\beta$ for all integer $k\geq k_0$. Since our interest goes to the convergence of the sequence, we may  assume with no loss of generality that $\beta_k$ is equal to $\beta$. Therefore, we only need to consider the behavior of the sequence $\{x_k\}$ with respect to the function 
\begin{align*}
	\Psi_{\beta}(x) = f(x) + \beta \max_{i=0, \ldots, m}(f_i(x)) + i_Q(x),
\end{align*}
whose minimization defines problem $\PP$. Set $\mu=\lambda+\beta\lambda'$, the local model we shall use to study (ESQM) is given by
\begin{align*}
	&h_{\rm ESQM}(x, y) \\
	&= f(x) + \left\langle\nabla f(x), y - x  \right\rangle + \beta \max_{i=0, \ldots, m}(f_i(x) + \left\langle\nabla f_i(x), y - x  \right\rangle) + \frac{\mu}{2} ||y - x||^2, \end{align*}
while the constraints inner approximations reduce to a constant multivalued mapping
$$D(x)=Q.$$


The assumptions   \eqref{cont}  for $D$ are obviously fulfilled.  Let us establish \eqref{majmin}. From assumptions \eqref{eq:ESQM0}, \eqref{aus-lip}, we have for any $x$ and $y$ in $Q$,
\begin{align*}
	f_i(y) &\leq f_i(x) +\left\langle\nabla f_i(x), y - x  \right\rangle + \frac{\lambda'}{2} ||x - y||^2, \quad 0 \leq i \leq m,\\
	f(y) &\leq f(x) +\left\langle\nabla f(x), y - x  \right\rangle + \frac{\lambda}{2} ||x - y||^2.
\end{align*}
Multiplying the first inequalities by $\beta$, taking the maximum over $i$ and adding the the last inequality gives $\Psi_{\beta}(y) \leq h_{\rm ESQM}(x,y)$ for any $x$ and $y$ in $Q$ which yields (i), (iii) \eqref{majmin}. Item (iv) is obvious while  item (iii)  \eqref{majmin} follows from the formula of the subdifferential of a max function Rockafellar and Wets (1998) \cite{rockafellar1998variational}. Assumption $\SSS$ is also fulfilled ($Q$ is convex, hence regular and so is $\Psi_{\beta}$).
 
 Once more the only point that needs to be checked more carefully  is the qualification assumption \eqref{qual}. For all $x, y \in Q$, let 
$I(x,y)$ be the active indices in the definition of $h_{\rm ESQM}(x, y)$. The subdifferential of $\hat h_{ESQM}$ is  given by
\begin{align*}
	& \phantom{==} \partial \hat h_{\rm ESQM}(x, y) \\
	& = \left( 
	\begin{array}{c}
		\mu(x - y) - \nabla^2 f(x) (x - y)\\
		\nabla f(x) + \mu(y - x)
	\end{array}
	\right) + 
	\beta \co \left\lbrace\left(  
	\begin{array}{c}
		- \nabla^2 f_i(x) (x - y)\\
	\nabla f_i(x)
	\end{array}
	\right):  i \in I(x,y) \right\rbrace +
	\left(  
	\begin{array}{c}
		0\\
		N_Q(y)
	\end{array}
	\right),
\end{align*}
where  $\co$ denotes the convex hull. The result follows from the fact that the $f_i$ is $C^2$ and that the hessian are bounded on bounded sets.

	Theorem \ref{th:convergence} applies and gives the desired conclusion. The fact that we eventually obtain a KKT point of $\PP$ is a consequence of Theorem~\ref{th:auslender}.

\subsubsection{Convergence of S$\ell^1$QP}
The proof is quasi-identical to that of ESQP, it is left to the reader.

\section*{Acknowledgments.}
Effort sponsored by the Air Force Office of Scientific Research, Air Force Material Command, USAF, under grant number FA9550-14-1-0056. This research  also benefited from the support of the ``FMJH Program Gaspard Monge in optimization and operations research" and an award of the Simone and Cino del Duca foundation of Institut de France. Most of this work was carried out during the last year of Edouard Pauwels' PhD at Center for Computational Biology in Mines ParisTech (Paris, France) and during a first Postdoctoral stay at LAAS-CNRS (Toulouse, France).

\bibliographystyle{myamsplain}

\bibliography{refs}
\end{document}